\documentclass[11pt]{amsart}
\usepackage{amssymb,amscd,verbatim,graphicx}
\usepackage[T1]{fontenc}
\usepackage{mathrsfs}
\usepackage{xy}
\xyoption{all}
\usepackage[colorlinks,linkcolor=blue,citecolor=blue,urlcolor=red]{hyperref}

\setlength{\textheight}{22cm}
\setlength{\textwidth}{14cm}
\setlength{\topmargin}{-1cm}
\setlength{\evensidemargin}{\oddsidemargin}
\setlength\marginparwidth{10pt}

\setcounter{tocdepth}{1}
\swapnumbers
\newtheorem{lemma}{Lemma}[section]
\newtheorem{prop}[lemma]{Proposition}
\newtheorem{thm}[lemma]{Theorem}
\newtheorem{cor}[lemma]{Corollary}

\theoremstyle{definition}
\newtheorem{defn}[lemma]{Definition}
\theoremstyle{remark}
\newtheorem{rem}[lemma]{Remark}
\newtheorem{ex}[lemma]{Example}

\newcommand{\Qvsp}{\mathbb{Q}\mathrm{-vsp.}}

\newcommand{\gr}{\mathrm{gr}}
\newcommand{\Q}{\mathbb{Q}}
\newcommand{\Z}{\mathbb{Z}}
\newcommand{\C}{\mathbb{C}}

\newcommand{\unit}{\mathbf{1}}
\newcommand{\Ah}{\mathcal{A}}
\newcommand{\Bh}{\mathcal{B}}
\newcommand{\Ch}{\mathcal{C}}
\newcommand{\Th}{\mathcal{T}}
\newcommand{\pathcat}{\mathcal{P}}
\newcommand{\isom}{\cong}
\newcommand{\tensor}{\otimes}
\newcommand{\Hom}{\mathrm{Hom}}

\newcommand{\Ker}{\mathrm{Ker}}

\newcommand{\id}{\mathrm{id}}
\newcommand{\Spec}{\mathrm{Spec}}

\newcommand{\catF}[1]{\mathrm{Ab}(#1)}
\newcommand{\catN}[1]{\mathrm{\mathcal{A}}(#1)}
\newcommand{\neeman}[1]{\mathrm{Ab}^\Delta(#1)}
\newcommand{\catNsub}[2]{\mathrm{\mathcal{A}}_{#1}(#2)}
\renewcommand{\mod}{\mathrm{-mod}}

\newcommand{\Proj}{\mathrm{-proj}}
\newcommand{\op}{\mathrm{op}}

\newcommand{\DMgm}{\mathrm{DM}_{\mathrm{gm}}}
\newcommand{\MMN}{\mathsf{MM}}
\newcommand{\ECM}{\mathsf{ECM}}
\newcommand{\eff}{\mathrm{eff}}
\newcommand{\NM}{\mathsf{NM}}
\newcommand{\Nori}{\mathrm{Nori}}
\newcommand{\good}{\mathrm{good}}
\newcommand{\sign}{\mathrm{sgn}}
\newcommand{\Sch}{\mathrm{Sch}}


\title{Tensor structure for Nori motives}
\author{Luca Barbieri-Viale}\thanks{The first author acknowledges the support of the {\it Ministero dell'Istruzione,  dell'Uni\-ver\-sit\`a e della Ricerca} (MIUR)  through the Research Project  (PRIN 2010-11) ``Arithmetic Algebraic Geometry and Number Theory'' and the Freiburg Institute of Advanced Study.}
\address{Dipartimento di Matematica ``F. Enriques'', Universit{\`a}
degli Studi di Milano\\ via C. Saldini, 50\\ Milano I-20133\\ Italy}
\email{luca.barbieri-viale@unimi.it}
\author{Annette Huber}\thanks{The second author thankfully acknowledges support by Freiburg Institute of Advanced Study during the preparation of this note.}
\address{Math. Institut, Universit\"at Freiburg, Ernst-Zermelo-Str. 1, 79104 Freiburg, Germany}
\email{annette.huber@math.uni-freiburg.de}
\author{Mike Prest}\thanks{
The third author gratefully acknowledges the support of the Freiburg Institute of Advanced Study.}
\address{Department of Mathematics, Alan Turing Building, University of Manchester, Manchester M13 9PL, UK}
\email{mprest@manchester.ac.uk}
\subjclass [2010]{14F99; 18E10; 18E30; 03C60}
\keywords{Nori motive; tensor category; free abelian category}

\begin{document}
\begin{abstract}
We construct a tensor product on Freyd's universal abelian category $\catF{C}$ attached to an additive tensor category or a $\tensor$-quiver and establish a universal property. This is used to give an alternative construction for the tensor product on Nori motives.
\end{abstract}

\maketitle

\tableofcontents

\section*{Introduction}
In the late 1990's Nori made a spectacular proposal for an unconditional definition of an abelian category of motives and a motivic Galois group over a field of characteristic zero.  It has two main inputs:
\begin{enumerate}
\item The existence of a universal abelian category attached to a fixed representation of a quiver.
\item His Basic Lemma (known earlier to Beilinson and Vilonen) which shows the existence of an algebraically defined
``skeletal filtration'' on an affine algebraic variety.
\end{enumerate}
The first part is enough to give the definition of the category. The second is needed in order to establish the tensor structure. In a third step, we pass from effective motives to all motives and check rigidity.

The motivic Galois group is its Tannaka dual.
However, all steps are intrinsically linked together. The proof of the existence of the abelian category is done by constructing a suitable coalgebra. The tensor product is defined by turning this coalgebra into a bialgebra. After localisation, it is shown to be even a Hopf algebra - the Hopf algebra of the motivic Galois group. Indeed, the proof given in full detail in \cite{HMS} gives as a byproduct a full proof of Tannaka duality.

Meanwhile there have been a couple of alternative approaches to the first step of the above program, see \cite{BVCL}, \cite{BV}, \cite{BVP} and \cite{Ivorra}. They are more general and arguably simpler.
However, these references did not address tensor products.

In this paper we explain how the approach of \cite{BVP} can be used to handle tensor categories and tensor functors.  We show that if $(C,\otimes)$ is an additive tensor category then Freyd's universal abelian category $\catF{C}$ carries an induced right-exact tensor structure which is also universal in a certain sense (the exact statement is Proposition~\ref{prop:right_exact}).

Given a module $M$ on $C$ (i.e., an additive functor into an abelian tensor category), this induces, under additional technical assumptions, a tensor structure on the universal
abelian category $\catN{M}$ for the module $M$. It is again universal, see Proposition~\ref{tensor:quot}. The results can also be reformulated in terms of representations of quivers, see Section~\ref{sect:quiver}, in particular Theorem~\ref{thm:Norigd}, bringing it even closer to the shape of Nori's original results. Our results are a lot more general in allowing modules with values in quite general abelian categories. We get back Nori's case as the case of the representation of a quiver in the category of modules over a Dedekind domain or even a noetherian ring of homological dimension at most $2$.

We also show how to apply our results to Nori motives. This can be done by using his original quiver of good pairs. Alternatively, we start with the more canonical tensor category of geometric motives in the sense of Voevodsky.
However, the functor $H^0_B$ used in the definition of
Nori motives is \emph{not} a tensor functor, in contrast with the
graded functor $H^*_B$. It remains to check that the K\"unneth components
are motivic. It is this step of the construction that relies on Nori's Basic Lemma. We give an abstract criterion in Section~\ref{sec:kuenneth} below. It is applied to Nori motives in Section~\ref{sec:motives}:
we obtain a Tannakian category and define the motivic Galois group as its
Tannaka dual. We find this more natural than defining the category as representations of the motivic Galois group. 

We feel that the nature of the argument and the role of the Basic Lemma become a lot clearer in this new description. However, its main advantage is the great generality in choosing the target category $\Ah$. E.g. we can easily define Nori motives over a base by using the Betti-realisation of triangulated motives into constructible sheaves.  In a follow-up, see \cite{BVP2}, two of us take a more axiomatic approach, using many-sorted languages such that (co)homology theories are models of certain regular theories in that language.  All this applies to several different geometric situations.

\subsection*{Acknowledgements}
Thanks to Ralph Kaufmann for bringing our attention to the references \cite{Bun} and \cite{Day}.
We thank Paul Balmer for pointing us to \cite{BKS} and subsequent discussions of the case of homological functors, see Section~\ref{sec:kuenneth}. We thank the referee for her/his suggestions that considerably improved the exposition.

\subsection*{Notation}
By a tensor category $(C, \otimes)$ we mean a category $C$ provided with a functor $\otimes: C\times C\to C$ satisfying an associativity constraint and with $\unit$ a unit object; in addition, also a commutativity constraint can be required, e.g. see \cite[\S 1]{DMT}.  This is often called a non-strict tensor category.
By an additive (resp.\/ abelian) tensor category we mean a tensor category $(C, \otimes)$ such that $C$ is additive (resp.\/ abelian) and $\tensor$ is a bi-additive functor, see \cite[Def. 1.15]{DMT}. Tensor functors are not assumed strict.
Tensor functors between additive tensor categories are assumed to be additive. We denote by $\Qvsp$ the tensor category of $\Q$-vector spaces.

If $\Ah$ is an abelian category, we denote by $\gr\Ah$ the associated category of $\Z$-graded objects. If, in addition, $(\Ah , \otimes)$ carries a tensor structure, we equip $(\gr\Ah , \otimes )$ with the induced tensor structure.
If the  tensor product is commutative, we choose the commutativity constraint on $\gr\Ah$ such that the product becomes graded anti-commutative.

For an additive category $C$ we shall consider the additive functors from $C$ to the category Ab of abelian groups as (left) $C$-modules. We shall denote by $C\mod$ the category of finitely presented $C$-modules, see e.g.~\cite[Chaps.~2, 3]{P}.

\section{Universal abelian tensor categories}
\label{sec:universal_tensor}

Let $C$ be an additive category. We denote by $\catF{C}$ the universal abelian category on $C$, see \cite{Frey}, also \cite[Chap.~4]{P}. We may refer to it as Freyd's abelian category. It comes with a canonical fully faithful functor $C\hookrightarrow\catF{C}$. Recall that this functor is universal with respect to additive functors into abelian categories, e.g. see \cite[Thm 1.1]{BVP}.

Thus, for $M:C\to\Ah$ an additive functor into some abelian category $\Ah$, we obtain an induced exact functor $\widetilde{M}:\catF{C}\to\Ah$, unique to natural equivalence.

We denote by $\catN{M}$ the quotient of $\catF{C}$ by the Serre subcategory which is the kernel of $\widetilde{M}$; we also denote by $\widetilde M: \catN{M}\to \Ah$ the induced faithful exact functor.

$$\xymatrix{C \ar[r] \ar[dd]_M & \catF{C} \ar[ddl]_{\widetilde{M}} \ar[d] \\
 & \catN{M} \ar[dl]^{\widetilde{M}} \\
\Ah}$$

We shall refer to $\catN{M}$ as the {\it universal abelian category defined by $M$}, according with \cite[\S 1.1]{BVP}. In fact, this abelian category $\catN{M}$ is universal for (i.e.~initial among) all abelian categories together with
a faithful exact functor into $\Ah$ which extends  $M$.  Note that, in the case where $\Ah$ is the category of finitely generated modules over a commutative  noetherian ring $R$, this recovers Nori's abelian category (see \cite[Chap.~7]{HMS} and compare with \cite[\S 1.2]{BVP}).
For later use, we introduce:
\begin{defn}\label{defn:subcat}
Let $C$ be additive and let $C\to \catF{C}$ be Freyd's abelian category. We denote by $\catF{C}^\flat$ the smallest full subcategory containing the objects in the image of $C$ and closed under kernels.
\end{defn}
\begin{rem}\label{rem:prime}The universal abelian category $\catF{C}$ can be constructed explicitly as the category $(C\mod)\mod$ (see, e.g., \cite[4.3]{P}). In this construction,
$\catF{C}^\flat$ is, because $C\mod$ has cokernels and every object of $C\mod$ is the cokernel of a morphism between representables, precisely the image of
$C\mod$ under the (contravariant) Yoneda embedding into $\catF{C}$.  All objects of $\catF{C}^\flat$ are representable functors hence, by the Yoneda lemma, projective.  The above definition is independent of this description.

\end{rem}
Let $(C, \tensor )$ be an additive tensor category, see \cite[\S 1]{DMT}. Consider an (additive) tensor functor $M: (C , \tensor)\to (\Ah, \tensor )$ where $(\Ah , \tensor )$ is an abelian tensor category. We want to equip the above universal abelian category $\catN{M}$ with a natural tensor structure $(\catN{M},\tensor )$ such that
$\widetilde{M}: (\catN{M},\tensor )\to (\Ah , \tensor)$ is turned into a tensor functor. We proceed in several steps.

\subsection*{Multilinear functors}
By definition, $\catF{C}$ has a universal propery with respect to additive functors. In fact, this extends to bi-additive and even multi-additive functors, even though we lose some properties.

We first recall a well-known property of injective resolutions.
\begin{lemma}\label{lem:resol}
Let $\Ah$ be an abelian category, $f:X\to Y$ a morphism in $\Ah$. Assume that
\[ 0\to X\to I_0\to I_1, \quad 0\to Y\to J_0\to J_1\]
are exact and that all $I_k$, $J_k$ are injective. Then
there are lifts $f_0:I_0\to J_0$, $f_1:I_1\to J_1$ making the diagram
\[\begin{xy}\xymatrix{
0\ar[r]&X\ar[r]\ar[d]_f&I_0\ar[r]\ar[d]_{f_0}\ar[r]^d &I_1\ar[d]_{f_1}\\
0\ar[r]&Y\ar[r]&J_0\ar[r]&J_1
}\end{xy}\]
commute. Moreover, if $(g_0,g_1)$ is a second lift, then there is
$h:I_1\to J_0$ such that
\[ f_0-g_0=h\circ d.\]
\end{lemma}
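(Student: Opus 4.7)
The argument is the standard three-step procedure for constructing and comparing lifts between (truncated) injective resolutions: at each stage, the defect to commutativity vanishes on the appropriate kernel, hence factors through the corresponding image, and injectivity of the target lets us extend along the monomorphism into the next term. I expect no real obstacle; the task is just to organise the bookkeeping.

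Step 1: Construct $f_0$. Composing $f$ with $Y\hookrightarrow J_0$ gives a morphism $X\to J_0$. Since $X\hookrightarrow I_0$ is monic and $J_0$ is injective, this morphism extends to some $f_0:I_0\to J_0$, and the extension property is precisely commutativity of the left square.

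Step 2: Construct $f_1$. Let $d:I_0\to I_1$ and $d':J_0\to J_1$ denote the differentials. The composite $d'\circ f_0:I_0\to J_1$, when restricted to $X$, equals $d'\circ(Y\hookrightarrow J_0)\circ f$, which is zero because $0\to Y\to J_0\to J_1$ is a complex. Since $X=\Ker(d)$, the morphism $d'\circ f_0$ factors through $\Im(d)$, giving $\varphi:\Im(d)\to J_1$. Injectivity of $J_1$, applied to the monomorphism $\Im(d)\hookrightarrow I_1$, yields an extension $f_1:I_1\to J_1$ with $f_1\circ d=d'\circ f_0$.

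Step 3: Construct the homotopy. Given a second lift $(g_0,g_1)$, both $f_0$ and $g_0$ satisfy $f_0\circ(X\hookrightarrow I_0)=(Y\hookrightarrow J_0)\circ f = g_0\circ(X\hookrightarrow I_0)$, so $f_0-g_0$ vanishes on $X=\Ker(d)$ and therefore factors as $\bar\varphi\circ p$, where $p:I_0\twoheadrightarrow\Im(d)$ is the coimage map and $\bar\varphi:\Im(d)\to J_0$. Applying injectivity of $J_0$ to the monomorphism $\Im(d)\hookrightarrow I_1$ produces $h:I_1\to J_0$ with $h\circ(\Im(d)\hookrightarrow I_1)=\bar\varphi$. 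Then $h\circ d=h\circ(\Im(d)\hookrightarrow I_1)\circ p=\bar\varphi\circ p=f_0-g_0$, as required.

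Note that the hypothesis on the $I_k$ being injective is not actually used; only injectivity of $J_0$ and $J_1$ matters, together with exactness of the top row (so that $X=\Ker d$ and $I_0/X\cong\Im d\hookrightarrow I_1$). I would mention this briefly after the proof, though it is not strictly needed for the statement.
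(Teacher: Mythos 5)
Your argument is correct and is precisely the standard construction that the paper's proof delegates to MacLane (Theorem 6.1) — you have simply written out the three steps explicitly, so it is essentially the same approach. Your closing observation that injectivity of the $I_k$ is never used (only exactness of the top row and injectivity of $J_0$, $J_1$) is accurate, and is a slight sharpening of the lemma as stated, though the paper does not remark on it.
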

\begin{proof}
Our complexes are the starting bits of injective resolutions
and $h$ is the beginning of a chain homotopy. The assertion is
usually proved as the first step of the proof of existence of a lift of $f$ to
an injective resolution and that the lift is unique up to chain homotopy,
see for example \cite[Theorem~6.1]{maclane}.
\end{proof}
As usual, we also have the dual statement for left resolutions by projectives.

\begin{prop}\label{prop:mult}
Let $C_1,\dots,C_n$ be additive categories, $\Ah$ an abelian category.
\begin{enumerate}
\item
 Let
$F:C_1\times\dots \times C_n\to \Ah$ be a multilinear functor, i.e., additive in each argument. Then $F$ extends to a multilinear functor
\[ \widetilde{F}:\catF{C_1}\times \catF{C_2}\times\dots\times\catF{C_n}\to \Ah\]
which is right-exact in each argument. Fix $j$ and for $i\neq j$ choose $X_i\in\catF{C_i}^\flat$ (see Definition~\ref{defn:subcat}). Then $\widetilde{F}(X_1,\dots,-,\dots,X_n)$ is exact as a functor on $\catF{C_j}$.
\item The functor $F$ is uniquely determined up to unique isomorphism of functors by these properties.
\item Let $\alpha:F_1\to F_2$ be a transformation of multilinear functors
$C_1\times\dots\times C_n\to\Ah$ and $\tilde{F}_1$ and $\tilde{F}_2$ their extensions to $\catF{C_1}\times\dots\times\catF{C_n}$. Then there is a transformation of functors $\tilde{\alpha}:\tilde{F}_1\to\tilde{F}_2$ extending $\alpha$. It is unique.
\end{enumerate}
\end{prop}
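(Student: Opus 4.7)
The plan is to induct on $n$. The case $n=1$ is the universal property of Freyd's abelian category recalled at the start of Section~\ref{sec:universal_tensor}: an additive $F\colon C\to\Ah$ extends uniquely, up to unique natural isomorphism, to an exact $\widetilde F\colon\catF{C}\to\Ah$. This yields~(1) (exactness is stronger than right-exactness, and the condition on other arguments is vacuous) and~(2); for~(3) I apply the universal property to both $F_1$ and $F_2$ and use the determination of $\widetilde F_i$ on all of $\catF{C}$ by its restriction to $C$.

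For $n\geq 2$, I extend one variable at a time. Fix $X_n\in C_n$: the restriction $F(-,\dots,-,X_n)$ is $(n-1)$-multilinear and, by the induction hypothesis, admits a unique extension $\widetilde F_{X_n}$ on $\catF{C_1}\times\dots\times\catF{C_{n-1}}$. Combining this uniqueness with part~(3) of the induction applied to morphisms $X_n\to X_n'$ in $C_n$ makes $\widetilde F_{X_n}$ natural in $X_n$; assembling, we obtain a multilinear functor on $\catF{C_1}\times\dots\times\catF{C_{n-1}}\times C_n$ that is additive in the $n$-th slot. For each fixed $(Y_1,\dots,Y_{n-1})$ the $n=1$ universal property then extends the remaining additive functor on $C_n$ uniquely to an exact functor on $\catF{C_n}$, and naturality assembles these into $\widetilde F\colon\catF{C_1}\times\dots\times\catF{C_n}\to\Ah$, multilinear and exact in the $n$-th variable. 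Uniqueness in part~(2) follows because any extension with the stated right-exactness properties must agree with $\widetilde F$ on $C_1\times\dots\times C_n$, and then on the full product by iterating the $n=1$ universal property across the variables. Part~(3) is handled identically, by transporting $\alpha$ through the same iterative construction using the universal property's extension of natural transformations.

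To verify right-exactness in the $j$-th variable for $j<n$ and the sharper exactness when the other arguments lie in $\catF{C_i}^\flat$, I would invoke Remark~\ref{rem:prime}: every $P\in\catF{C_n}^\flat$ is a kernel $\ker_{\catF{C_n}}(c^0\to c^1)$ with $c^k\in C_n$, and every $Y_n\in\catF{C_n}$ admits a two-step projective presentation $P^1\to P^0\to Y_n\to 0$ with $P^k\in\catF{C_n}^\flat$. Since $\widetilde F$ is exact in the $n$-th slot, these translate into formulas expressing $\widetilde F(-,\dots,-,P)$ as a kernel, and $\widetilde F(-,\dots,-,Y_n)$ as a cokernel, of transformations between the inductively-constructed functors $\widetilde F(-,\dots,-,c)$ for $c\in C_n$, whose exactness properties are controlled by the induction hypothesis; the dual of Lemma~\ref{lem:resol} governs uniqueness of comparison maps between different projective resolutions. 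The main obstacle lies precisely in this coordination of exactness across variables: the construction privileges the $n$-th variable as the one where exactness is tautological, while the statement demands symmetric right-exactness everywhere and the sharper symmetric exactness with projective arguments. The plan handles this by always exploiting exactness in the $n$-th slot together with the inductive hypothesis on the first $n-1$ variables, reducing each new exactness claim to a diagram chase involving kernels and cokernels of transformations between functors whose exactness type is already known.
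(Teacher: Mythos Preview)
Your inductive extension, one variable at a time via the full universal property of $\catF{C_n}$, does not produce the functor described in the statement. The problem is that the $n=1$ universal property yields an \emph{exact} extension in the last variable, and this extra exactness is purchased at the expense of right-exactness in the earlier variables. Concretely, take $n=2$ and the setup of Example~\ref{ex:Zvier} with $C_1=C_2=C$ and $F=\tensor$. Your first step gives $G(Y,\Z/4)=Y$ and $G(Y,\cdot 2)=\cdot 2$ on $Y$; your second step, being exact in the second slot, then sends the exact sequence $0\to(\Z/2,-)\to(\Z/4,-)\xrightarrow{\cdot 2}(\Z/4,-)$ to a kernel, so $\widetilde F(Y,(\Z/2,-))=\ker(\cdot 2\colon Y\to Y)=Y[2]$. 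But $Y\mapsto Y[2]$ is left-exact, not right-exact: applied to the surjection $(\Z/4,-)\twoheadrightarrow S$ it gives the zero map $(\Z/2,-)\to S$. Thus your $\widetilde F$ fails part~(1). One also computes $\widetilde F(T,T)=T$ with your recipe, whereas the correct value is $(\Z/2,-)$.

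The underlying issue is that the uniqueness clause~(2) pins down $\widetilde F$ by right-exactness in each slot \emph{together with} exactness when the other arguments are in $\catF{C_i}^\flat$; it does not follow from the $n=1$ property applied slotwise, since that characterises exact extensions, not merely right-exact ones. The paper therefore does \emph{not} iterate on $n$. Instead it extends all variables simultaneously in two half-steps: first from each $C_i$ to $(C_i\mod)^\op=\catF{C_i}^\flat$ by taking kernels along injective copresentations (obtaining a functor $F'$ left-exact in every slot), and then from $\catF{C_i}^\flat$ to $\catF{C_i}$ by taking cokernels along projective presentations (right-exact in every slot). The claimed exactness when the other arguments lie in $\catF{C_i}^\flat$ then follows from a diagram chase using that objects of $\catF{C_i}$ have projective resolutions of length~$2$ with terms in $\catF{C_i}^\flat$. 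This simultaneous, symmetric treatment is what makes the exactness statements in~(1) come out correctly.
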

\begin{proof}

Recall that $\catF{C_i}=(C_i\mod)\mod$ and that the universal functor factors
\[ C_i\to (C_i\mod)^\op\to (C_i\mod)\mod\]
where both steps are given by the Yoneda embedding. As pointed out in Remark~\ref{rem:prime} the subcategory $\catF{C_i}^\flat$ agrees with the image of $(C_i\mod)^\op$.

All statements are shown in two steps. In the first we extend to a functor
\[ F':(C_1\mod)^\op\times (C_2\mod)^\op \times \dots \times (C_n\mod)^\op \to \Ah\]
which will be multilinear and left exact in each argument.
In the second step, which is actually dual to the first, we extend $F'$ to
$\tilde{F}$.

We first show uniqueness. This will make clear why the formula that
we use in the construction is correct.
Let $E$ be any extension of $F$ to $\catF{C_i}$ with the
exactness property of (1).
Let $X_i\in (C_i\mod)^\op$.
We argue by descending induction on the number of $X_i$ which are in the image of
$C_i$, i.e., of the form $(A_i,-)^\op$. If they all are, then
$E(X_1,\dots,X_n)=F(A_i,\dots,A_n)$ by assumption. Assume that
$E$ is uniquely determined if at least $m$ of the $X_i$ are corepresentable.
After reordering we have to consider the tuple $(X_1,\dots,X_n)$
with $X_i=(A_i,-)^\op$ for $i<m$. By definition, there is an
injective corepresentation
\[ 0\to X_m\to (A_m,-)^\op\to (B_m,-)^\op.\]
By (1), the functor $E(X_1,\dots,X_{m-1},-,X_{m+1},\dots,X_n)$
is exact. Hence we have an exact sequence
\begin{multline*}
 0\to E(X_1,\dots,X_n)\to
E(X_1,\dots,X_{m-1},(A_m,-)^\op,X_{m+1},\dots,X_n)\\
\to E(X_1,\dots,X_{m-1},(B_m,-)^\op,X_{m+1},\dots,X_n).
\end{multline*}
By induction the two terms on the right are uniquely determined up to unique isomorphism.
As a kernel, $E(X_1,\dots,X_n)$ is again uniquely determined up to unique isomorphism.
By induction, this shows uniqueness if all arguments are in
$C_i\mod$.

The dual argument for a right exact $E$ and representations gives uniqueness for arguments in $\catF{C_i}$.

We turn to the construction of $F'$. Let $X_i\in (C_i\mod)^\op$.
By definition, these objects have an injective copresentation
\[ 0\to X_i\to (A_i,-)^\op\to (B_i,-)^\op.\]
We choose such a presentation for each object $X_i\in (C_i\mod)^\op$.
The uniqueness proof suggests $F'(X_1,\dots,X_n)\subset F(A_1,\dots,A_n)$
as an iteration of kernels. The same object is given by the formula
\[ F'(X_1,\dots,X_n):=\Ker\left(F(A_1,\dots,A_n)\to \bigoplus_{m=1}^n F(A_1,\dots,A_{m-1},B_m,A_{m+1},\dots,A_n)\right).\]
In other words, applying $F$ to the $n$-tuple of complexes $(A_i,-)^\op\to (B_i,-)^\op$ we obtain an $n$-fold complex. The above is $H^0$ of its total complex.

Let $1\leq i\leq n$. For a morphism $X_i\to Y_i$ in $(C_i\mod)^\op$ we choose a lift to the copresentations as in Lemma~\ref{lem:resol}. This induces a morphism
$F'(X_1,\dots,X_n)\to F'(X_1,\dots,Y_i,\dots,X_n)$. It is independent of
the lift because any two such differ by $h$ as in Lemma~\ref{lem:resol}. This makes $F'$ a functor in each variable.

The dual argument via projective presentations gives a right-exact extension to
$\catF{C_i}$.

A diagram chase shows that the functor $\tilde{F}$ has the exactness property claimed in (1) because
$F'$ is left exact, $\tilde{F}$ right exact and every object $Y$ of
$\catF{C_i}$ has a projective resolution of the form
\[ 0\leftarrow Y\leftarrow P^0\leftarrow P^1\leftarrow P^2\leftarrow 0\]
with $P^i\in (C_i\mod)^\op$.

Now let $\alpha:F_1\to F_2$ be a transformation of functors. Going through
the above construction, we get induced $\alpha':F'_1\to F'_2$ and then
$\tilde{\alpha}:\tilde{F}_1\to \tilde{F}_2$. The uniqueness argument for
the functors also gives the uniqueness of the transformation.
\end{proof}

\begin{rem}Unexpectedly
the extension $\tilde{F}$ \emph{fails} to be exact in each argument.
For a counterexample, see Example~\ref{ex:Zvier} below.
\end{rem}

\begin{rem} In a general enriched-category setting, lifting monoidal structure to functor categories can be found in \cite{Bun} and \cite{Day}.
\end{rem}

This applies in particular to additive tensor categories.

\begin{defn}\label{FT}
Let $(C, \otimes)$ be an additive tensor category. We extend the functor $\tensor:C\times C\to C$ defining
\[ \tensor:\catF{C}\times\catF{C}\to \catF{C}\]
as the extension of $C\times C\to C\hookrightarrow \catF{C}$ of Proposition~\ref{prop:mult}.
\end{defn}

\begin{prop}\label{prop:tensor}
Let $(\catF{C}, \otimes )$ be Freyd's category together with the functor in Definition~\ref{FT}. Then
\begin{enumerate}
\item $(\catF{C}, \otimes )$ is an abelian tensor category.
\item The tensor product is right-exact by construction. The objects in $\catF{C}^\flat$ are flat, i.e., acyclic with respect to $\tensor$.
\item If the tensor structure on $C$ is commutative
then so is the tensor structure on $\catF{C}$.
\end{enumerate}
\end{prop}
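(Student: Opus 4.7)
My plan is to deduce all three parts from Proposition~\ref{prop:mult} applied to suitable multilinear functors and natural transformations between them. The tensor structure on $\catF{C}$ is already defined as the extension of the bilinear functor $\tensor : C\times C\to C\hookrightarrow \catF{C}$. The key observation is that every piece of coherence data on $(C,\tensor)$---the associator, unit constraints, and commutativity constraint---is a natural transformation between multilinear functors from powers of $C$ into $C$, while the coherence axioms (pentagon, triangle, hexagon) are equalities between such transformations. Proposition~\ref{prop:mult}(3) then produces extensions of all these transformations to $\catF{C}$, and its uniqueness clause forces the coherence axioms to hold automatically.

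For part (1), I would first promote the iterated tensor products $(X\tensor Y)\tensor Z$ and $X\tensor(Y\tensor Z)$ on $\catF{C}^3$ to trilinear functors extending their restrictions to $C^3$. Both are right-exact in each variable (composition of right-exact functors) and exact in each variable when the other two lie in $\catF{C}^\flat$ (composition of exact functors, using Proposition~\ref{prop:mult}(1) for each factor), hence by the uniqueness clause of Proposition~\ref{prop:mult}(2) they coincide with the canonical trilinear extensions of $(A,B,D)\mapsto (A\tensor B)\tensor D$ and $(A,B,D)\mapsto A\tensor(B\tensor D)$. The associator on $C^3$ is then lifted via Proposition~\ref{prop:mult}(3) to a natural transformation on $\catF{C}^3$; its inverse lifts similarly, and both composites are extensions of the identity transformation on $C^3$, so by uniqueness they are identities, making the extended associator an isomorphism. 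The unit constraints are treated the same way, with $\unit$ regarded as an object of $\catF{C}$ via $C\hookrightarrow\catF{C}$. The pentagon and triangle axioms hold on $C$-arguments, so both sides of each axiom are natural transformations between the same pair of multilinear functors on $\catF{C}^n$ that agree on $C^n$; uniqueness in Proposition~\ref{prop:mult}(3) forces equality.

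Part (2) is essentially immediate: right-exactness in each variable is the content of Proposition~\ref{prop:mult}(1) with $n=2$, and flatness of $X\in\catF{C}^\flat$ follows from the second half of Proposition~\ref{prop:mult}(1), which asserts that fixing one argument in $\catF{C}^\flat$ makes the extended functor exact in the other. For part (3), the commutativity constraint $c_{A,B}:A\tensor B\to B\tensor A$ on $C$ is a natural isomorphism between two bilinear functors $C\times C\to \catF{C}$; it extends uniquely to $\catF{C}\times\catF{C}$ by Proposition~\ref{prop:mult}(3), and invertibility is verified exactly as for the associator. The hexagon axiom then lifts by the same uniqueness-of-transformation trick applied to trilinear functors on $C^3$.

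The one step that requires genuine care---and which I expect to be the main subtlety---is verifying that the iterated tensor constructions $(X\tensor Y)\tensor Z$ really coincide with the canonical multilinear extensions used to lift the associator, so that Proposition~\ref{prop:mult}(3) applies. Everything rests on the uniqueness of multilinear right-exact extensions in Proposition~\ref{prop:mult}(2), which in turn relies on the extra exactness condition on $\catF{C_i}^\flat$-arguments; so one must check that this exactness condition propagates through iterated tensoring, as indicated above. Once this is settled, the coherence axioms are essentially free of cost.
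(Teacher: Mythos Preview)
Your proposal is correct and follows essentially the same approach as the paper: both identify the iterated tensor products on $\catF{C}^3$ with the canonical trilinear extensions via the uniqueness clause of Proposition~\ref{prop:mult}, lift the constraints via part~(3), and deduce the coherence axioms (pentagon, hexagon) from uniqueness of transformations on $C^4$ and $C^3$. The one point neither you nor the paper makes fully explicit is that checking exactness of, say, $Z\mapsto (X\tensor Y)\tensor Z$ with $X,Y\in\catF{C}^\flat$ requires $X\tensor Y\in\catF{C}^\flat$; this holds because the explicit construction of $\tensor$ on $\catF{C}^\flat=(C\mod)^\op$ in the proof of Proposition~\ref{prop:mult} produces a kernel of a map between objects of $C$, and $\catF{C}^\flat$ is closed under such kernels by definition.
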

\begin{proof}
Right-exactness and acyclicity are special cases of Proposition~\ref{prop:mult}.
Let $\unit$ be the unit object of $C$. By definition it comes with a transformation of functors $u:\unit\tensor-\to\id$ on $C$. Let $[\unit]$ be its image in
$\catF{C}$. In explicit formulas this means $[\unit]=((\unit,-),-)$.
Then $[\unit]$
with the induced transformation is the unit
of $\catF{C}$.
The equivalences used to express the associativity constraint on $C^3$ (see \cite[\S 1]{DMT}) induce equivalences
on $\catF{C}^3$. In detail: Let
$F_1:C^3\xrightarrow{\tensor\circ (\id,\tensor)}C$ and
$F_2:C^3\xrightarrow{\tensor\circ(\tensor,\id)}C$. The associativity constraint is a functorial isomorphism $\alpha:F_1\to F_2$. By abuse of notation we use the same notation for their composition with the inclusion
$C\to\catF{C}$. Note that $\alpha$ is still a functorial isomorphism.
The functor
$\catF{C}^3\to \catF{C}$ given by $(X,Y,Z)\mapsto X\tensor(Y\tensor Z)$ is right-exact in each argument and exact as a functor in one variable if the other entries are flat. By the uniqueness property of Proposition~\ref{prop:mult} it agrees with $\tilde{F}_1$. The same argument also applies to
$F_2$. Again by Proposition~\ref{prop:mult}, the transformation extends to a
transformation $\tilde{\alpha}$. This is our associativity constraint. We need to check that a certain diagram of functors on $\catF{C}^4$ involving
$\tensor$ and $\tilde{\alpha}$ commutes. This holds by the uniqueness part
of Proposition~\ref{prop:mult} applied to functors $C^4\to C$.

We argue similarly for the commutativity constraint if there is one on $C$.
\end{proof}
\begin{defn}\label{def:flat-subcat}
For an abelian tensor category, with a right exact tensor product, a \emph{$\flat$-subcategory} is a full additive subcategory of flat objects (i.e., acyclic with respect to the tensor product) which is closed under kernels. If $(\Ah, \tensor)$ is such an abelian tensor category we shall denote by $\Ah^\flat \subseteq \Ah$ some $\flat$-subcategory.
\end{defn}
As a consequence of Proposition~\ref{prop:tensor} we have that $\catF{C}^\flat \subset \catF{C}$ as in Definition~\ref{defn:subcat} is a
$\flat$-subcategory.
\begin{prop}[Universal property]\label{prop:right_exact}
Let $C$ be an additive tensor category. Let $\Ah$ be an abelian tensor category with a right exact tensor product. Let $M: (C , \tensor)\to (\Ah, \tensor )$ be a tensor functor. In addition, assume that $M$ factors via $\Ah^\flat\subseteq\Ah$ a $\flat$-subcategory  (see Definition \ref{def:flat-subcat}).
Then $\widetilde{M}:(\catF{C}, \tensor)\to (\Ah, \tensor )$ is a tensor functor.
The triple $(\catF{C},\catF{C}^\flat,\tensor)$ is universal with this property, in particular unique.
\end{prop}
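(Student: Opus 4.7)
The plan is to deduce everything from the uniqueness part of Proposition~\ref{prop:mult} applied to suitable multilinear functors. To produce the coherence isomorphism $\widetilde{M}(X\tensor Y)\isom\widetilde{M}(X)\tensor\widetilde{M}(Y)$, I would compare the two bi-additive functors
\[ F_1,F_2:\catF{C}\times\catF{C}\to\Ah,\qquad F_1(X,Y)=\widetilde{M}(X\tensor Y),\qquad F_2(X,Y)=\widetilde{M}(X)\tensor\widetilde{M}(Y).\]
Both are right-exact in each variable, by exactness of $\widetilde M$ combined with right-exactness of $\tensor$ on either side, and both restrict on $C\times C$ to the same functor via the tensor structure of $M$, namely $M(-)\tensor M(-)\isom M(-\tensor-)$. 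To invoke Proposition~\ref{prop:mult}(2) I also need each $F_i(X,-)$ to be exact whenever $X\in\catF{C}^\flat$. For $F_1$ this is immediate from Proposition~\ref{prop:tensor}(2). For $F_2$ I would first establish the auxiliary claim that $\widetilde{M}(\catF{C}^\flat)\subseteq\Ah^\flat$: the full subcategory $S\subseteq\catF{C}$ consisting of those $X$ with $\widetilde{M}(X)\in\Ah^\flat$ contains $C$ by the hypothesis on $M$, and is closed under kernels because $\widetilde M$ is exact and $\Ah^\flat$ is closed under kernels (Definition~\ref{def:flat-subcat}); by minimality of $\catF{C}^\flat$ in Definition~\ref{defn:subcat}, $\catF{C}^\flat\subseteq S$. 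Since every object of $\Ah^\flat$ is flat, $F_2(X,-)$ is then exact for $X\in\catF{C}^\flat$. Proposition~\ref{prop:mult}(2) now lifts the identification $F_1|_{C\times C}\isom F_2|_{C\times C}$ to a unique iso $F_1\isom F_2$, which serves as the tensor-coherence datum.

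Next, I would check the associativity pentagon, the unit triangles, and, when applicable, the commutativity hexagon for $\widetilde{M}$. Each such axiom is an equality of multilinear functors on some power of $\catF{C}$ whose restriction to the corresponding power of $C$ holds because $M$ is a tensor functor; Proposition~\ref{prop:mult}(2), applied in each case to the appropriate number of variables, then propagates the equality from $C$ to $\catF{C}$. This would complete the proof that $\widetilde{M}$ is a tensor functor sending $\catF{C}^\flat$ into $\Ah^\flat$.

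For the universality assertion, the functor $\widetilde{M}$ is the unique exact extension of $M$ up to unique natural equivalence by the universal property of Freyd's category recalled at the start of the section; it sends $\catF{C}^\flat$ into $\Ah^\flat$ by the auxiliary claim above; and its tensor-coherence datum is uniquely determined by $M$ via the uniqueness part of Proposition~\ref{prop:mult}(2). Hence $(\catF{C},\catF{C}^\flat,\tensor)$ is initial among triples equipped with a tensor functor from $(C,\tensor)$ that factors through the chosen $\flat$-subcategory, and in particular unique up to unique equivalence of such triples. The main obstacle is bookkeeping rather than conceptual: each invocation of Proposition~\ref{prop:mult}(2) must be arranged so that both candidate multilinear functors are simultaneously right-exact in each argument and exact when a $\flat$-argument is fixed, which is exactly where the hypothesis that $M$ factors through $\Ah^\flat$ and the closure of $\Ah^\flat$ under kernels both come into play.
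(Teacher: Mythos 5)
Your proof is correct, and it is a more systematic variant of the paper's argument. The paper constructs the coherence isomorphism $\widetilde{M}(X\tensor Y)\isom\widetilde{M}(X)\tensor\widetilde{M}(Y)$ by hand: it redoes the two-step extension of Proposition~\ref{prop:mult}, choosing injective copresentations $0\to X_i\to (A_i,-)^\op\to (B_i,-)^\op$, noting that $M(A_i),M(B_i)$, hence $M'(X_i)$, land in $\Ah^\flat$, and then chasing a $3\times 3$ diagram of tensor products of flat objects to identify $M'(X_1)\tensor M'(X_2)$ with $M'(X_1\tensor X_2)$. You instead observe that $F_1(X,Y)=\widetilde{M}(X\tensor Y)$ and $F_2(X,Y)=\widetilde{M}(X)\tensor\widetilde{M}(Y)$ are both bi-additive extensions of the same (up to the coherence datum $\kappa$ of $M$) functor on $C\times C$ satisfying the exactness hypotheses of Proposition~\ref{prop:mult}, and you invoke parts (2)-(3) of that proposition to produce the isomorphism abstractly. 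The only nonobvious ingredient you need is $\widetilde{M}(\catF{C}^\flat)\subseteq\Ah^\flat$, which you derive cleanly from the minimality in Definition~\ref{defn:subcat} and the closure of $\Ah^\flat$ under kernels; the paper establishes the same fact along the way ($M'(X_i)\in\Ah^\flat$) inside its explicit computation. Your route is tidier and also spells out the verification of the coherence axioms (pentagon, hexagon, unit triangles) via Proposition~\ref{prop:mult}(3), which the paper leaves implicit. The paper's version buys a concrete picture of where the hypotheses enter (the exact rows and columns of the $3\times 3$ diagram), whereas yours buys economy and a uniform treatment of all the coherence conditions. One small caveat worth making explicit in your write-up: $F_1|_{C\times C}$ and $F_2|_{C\times C}$ are not literally equal, only isomorphic via $\kappa$; you therefore need both the uniqueness of part (2) and the extension-of-transformations statement of part (3), applied to $\kappa$ and its inverse, to produce and uniquely characterize the iso $F_1\isom F_2$.
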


\begin{proof}Let $M: (C , \tensor)\to (\Ah, \tensor )$ be a tensor functor. We have to compare
\[ \catF{C}\times\catF{C}\to\catF{C}\to\Ah\]
and
\[ \catF{C}\times\catF{C}\to \Ah\times\Ah\to\Ah.\]
Both are right-exact in each argument (this is where right-exactness of the tensor product on $\Ah$ is used) and agree on $C\times C$.

As in the proof of Proposition~\ref{prop:mult}, we extend $M$ in two steps: first to $(C\mod)^\op$, then to $\catF{C}=(C\mod)\mod$.
The second step is unproblematic as it only uses the right-exactness.
In the first step, we need to check the action on (certain)
kernels. Let $X_1,X_2\in (C\mod)^\op$ with
resolutions
\[ 0\to X_i\to (A_i,-)^\op\rightarrow (B_i,-)^\op.\]
By definition
\[ 0\to M'(X_i)\to M(A_i)\to M(B_i)\]
is exact.
By assumption $M(A_i), M(B_i)$ and hence also $M'(X_i)$ are in $\Ah^\flat$. In particular,
consider the diagram
\[\begin{xy}\xymatrix{
 &0\ar[d]&0\ar[d]&0\ar[d]\\
0\ar[r]&M'(X_1)\tensor M'(X_2)\ar[r]\ar[d]&M'(X_1)\tensor M(A_2)\ar[r]\ar[d]&M'(X_1)\tensor M(B_2)\ar[d]\\
0\ar[r]&M(A_1)\tensor M'(X_2)\ar[r]\ar[d]&M(A_1)\tensor M(A_2)\ar[r]\ar[d]&M(A_1)\tensor M(B_2)\ar[d]\\
0\ar[r]&M(B_1)\tensor M'(X_2)\ar[r]&M(B_1)\tensor M(A_2)\ar[r]&M(B_1)\tensor M(B_2)
}\end{xy}\]
All rows and columns are exact because they arise by tensoring an exact sequence with a flat object. This implies
\begin{multline*} M'(X_1)\tensor M'(X_2)=\Ker\big( M(A_1)\tensor M(A_2)\to (M(A_1)\tensor M(B_2))\oplus (M(B_1)\tensor M(A_2))\big)\\
=M'(X_1\tensor X_2).
\end{multline*}

The triple $(\catF{C},\catF{C}^\flat,\tensor)$ satisfies itself the assumptions of the universal property, hence it is universal and as such unique.
\end{proof}

\begin{rem}\label{rem:univ}
There are a number of interesting cases where the assumptions of Proposition \ref{prop:right_exact} and Definition \ref{def:flat-subcat} are satisfied. However, they are not as general as one could hope for.
\begin{enumerate}
\item If $\tensor$ is exact on $\Ah$, then $\Ah^\flat=\Ah$ clearly satisfies the assumptions.
\item If $C_1\to C_2$ is a $\tensor$-functor between additive tensor categories, by composition, we may consider $M : C_1 \to \Ah^\flat =\catF{C_2}^\flat\subset  \Ah = \catF{C_2}$ which satisfies the assumptions; then, by the universal property, we get an exact tensor functor $\widetilde{M}: \catF{C_1}\to \catF{C_2}$.
\item
The assumptions are satisfied if $\Ah=R\mod$ for a Dedekind domain $R$ where
$\Ah^\flat$ is the $\flat$-subcategory of projective finitely-generated $R$-modules, i.e., torsion free finitely-generated modules, and $M:C\to\Ah^\flat$ any tensor functor.  In particular this is true for $R=\Z$.
\item They are not satisfied for $\Ah=R\mod$ for a general Noetherian commutative ring $R$
and the subcategory of projective finitely-generated  $R$-modules, which is not a $\flat$-subcategory if the global dimension of $R$ is $>2$. See the example below.
\end{enumerate}
\end{rem}

\begin{ex}\label{ex:Zvier} Let $C$ be the category with objects $(\Z/4)^n$ for $n\geq 0$ and morphisms given by homomorphisms of abelian groups.

Let $\Ah=\Z/4\mod$. In this case is possible to compute all objects explicitly.
The functor $M\mapsto M^\vee=\Hom(M,\Z/4)$ is an antiequivalence of $C$ with itself. We have $C\mod\isom\Z/4\mod$ with $C\to\Z/4\mod$ given by $M\mapsto M^\vee$.
Hence
$\catF{C}$ is the category of finitely presented presheaves on $\Z/4\mod$. Objects are uniquely determined by the values of these presheaves on the groups $\Z/4$ and $\Z/2$.
Direct computation will show:
\begin{enumerate}
\item $\tensor$ is not biexact on $\catF{C}$.
\item The tensor functor $\catF{C}\to\Ah$ induced by the inclusion functor $C\to\Ah$ is not a tensor functor.
\end{enumerate}
By Auslander-Reiten theory, see e.g.~\cite[\S IV.6, p.~149]{ASS},
the simple objects of the category $\catF{C}$ have the form $(X,-)/{\rm rad}(X,-)$ for $X$ an indecomposable $\Z/4$-module.  So there are two simple objects, $S$ and $T$ say, and these are such that $S(\Z/4) =\Z/2$, $S(\Z/2) =0$ and $T(\Z/4) =0$, $T(\Z/2) =\Z/2$.  Noting the exact sequence $0 \to \Z/2 \xrightarrow{j} \Z/4 \xrightarrow{p} \Z/2 \to 0$ and considering the maps $(p,-)$ and $(j,-)$ in $\catF{C}$, it can be easily checked that ${\rm rad}(\Z/4,-) =(\Z/2,-)$ and that $(\Z/2,-)$ has length 2, with socle $S$.  The remaining indecomposable objects of $\catF{C}$ may then be computed (see, for example, \cite[4.3]{PreAxtFlat}):  there are 5 of them, all of them subquotients of the two representable functors.  They are $(\Z/4,-)$, $(\Z/2,-)$, the two simples $S$, $T$ and $(\Z/4,-)/S$.

Now consider the exact functor $\widetilde{\Z/4}: \catF{C} \to \Z/4\mod$.  This is evaluation of an object of $\catF{C}$, considered as a functor on $\Z/4\mod$, at $\Z/4$, hence is $0$ only on $T$ among those five indecomposables.  Therefore its kernel is the Serre subcategory which consists of direct sums of copies of $T$.  In order to compute $T\otimes T$, we apply the definition of the tensor product on $\catF{C}$ using the projective presentation $(\Z/4,-) \xrightarrow {(j,-)}(\Z/2,-) \xrightarrow{\pi_T} T \to 0$ of $T$
and, checking that $({\rm id}_{\Z/2}, -) \otimes (j,-) =0$, we obtain $T\otimes T = (\Z/2,-)$, which is not in the kernel of $\widetilde{\Z/4}$, so this is not a tensor functor.
As part of the computation of $T\otimes T$ one sees that $T\otimes (\Z/2,-) =(\Z/2,-)$.  So applying $T\otimes -$ to the monomorphism $(\Z/2,-) \xrightarrow{(p,-)} (\Z/4,-)$ gives $(\Z/2,-) \to T$ which is not monic, showing that $\otimes$ is not exact on $\catF{C}$.
\end{ex}

This implies that we cannot expect a different, exact, tensor product on $\catF{C}$ extending the tensor product on $C$ -- by the universal property the identity would have to be a tensor functor.

\subsection*{Tensor structures on $\catN{M}$}
Consider $(\Ah, \tensor)$ an abelian tensor category with a right-exact tensor product.

For the sake of exposition we now drop explicit reference to $\tensor$ if unnecessary.
\begin{prop} \label{tensor:quot}
Let $C$ be an additive tensor category, $\Ah$ an abelian tensor category with a right exact tensor product, and $M:C\to\Ah$ an additive tensor functor. Further assume that $M$ factors through a $\flat$-subcategory $\Ah^\flat \subset \Ah$ (see Definition~\ref{def:flat-subcat}).
\begin{enumerate}
\item
 Then $\catN{M}$ carries a canonical tensor structure such that the faithful exact  functor $\widetilde{M}:\catN{M}\to\Ah$ is a tensor functor.
\item If in addition, the tensor structures on $C$ and $\Ah$ are commutative and
the tensor functor is symmetric, then the tensor product on $\catN{M}$
is symmetric.
\item If in addition, the tensor structure on $C$ is rigid and the tensor product and the $\Hom$-functor on $\Ah$ are exact in both arguments, then the same is true for $\catN{M}$.
\end{enumerate}
\end{prop}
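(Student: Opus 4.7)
The plan is to descend the right-exact tensor structure on $\catF{C}$ provided by Proposition~\ref{prop:right_exact} to the Serre quotient $\catN{M}=\catF{C}/K$, where $K:=\Ker(\widetilde M)$. For (1), the crucial observation is that $K$ is a tensor ideal: since $\widetilde M$ is a tensor functor on $\catF{C}$, $\widetilde M(X)=0$ forces $\widetilde M(X\tensor Y)\cong \widetilde M(X)\tensor\widetilde M(Y)=0$ for every $Y$. The standard quotient construction for tensor ideals then endows $\catN{M}$ with a right-exact tensor product $[X]\tensor[Y]:=[X\tensor Y]$, the localization $\catF{C}\to\catN{M}$ is a tensor functor, and the induced $\widetilde M:\catN{M}\to\Ah$ inherits a tensor-functor structure via the universal property of the quotient. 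Part (2) is immediate: the commutativity constraint on $\catF{C}$ given by Proposition~\ref{prop:tensor}(3) descends to $\catN{M}$ by functoriality of the localization, and the coherence axioms are preserved.

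For part (3), I would first note that $\tensor$ becomes biexact on $\catN{M}$: the faithful exact functor $\widetilde M$ reflects exact sequences, and since it commutes with $\tensor$ and $\tensor$ is biexact on $\Ah$, biexactness on $\catN{M}$ follows at once. To produce duals, I would extend $(-)^\vee:C\to C^{\op}$ by the universal property of Freyd (using $\catF{C^{\op}}\cong\catF{C}^{\op}$) to a contravariant exact anti-equivalence $(-)^\vee:\catF{C}\to\catF{C}$. This preserves $K$ by the following uniqueness argument: the two contravariant exact functors $X\mapsto \widetilde M(X^\vee)$ and $X\mapsto \uHom_\Ah(\widetilde M(X),\unit)$ from $\catF{C}$ to $\Ah$ agree on $C$ --- every tensor functor from a rigid category automatically preserves duals, since applying $M$ to $\eta_c$ and $\epsilon_c$ exhibits $M(c^\vee)$ as a dual of $M(c)$, hence $M(c^\vee)\cong \uHom_\Ah(M(c),\unit)$ --- and by uniqueness of exact extensions they agree on all of $\catF{C}$. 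Therefore $\widetilde M(X)=0$ implies $\widetilde M(X^\vee)=0$, and $(-)^\vee$ descends to an exact contravariant anti-auto-equivalence of $\catN{M}$.

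Finally, the coev and ev maps must be produced for every $X\in\catN{M}$, with the triangle identities verified. The plan is to exploit the fact that every object of $\catN{M}$ is built from objects of $C$ by iterated kernels, cokernels, and direct sums (inherited from the analogous presentation of objects in $\catF{C}$), and to propagate rigidity along these operations using biexactness: for a short exact sequence $0\to A\to B\to C\to 0$ with $B,C$ rigid, take $A^\vee:=\Coker(C^\vee\to B^\vee)$ (available since $(-)^\vee$ is contravariant exact) and construct $\eta_A,\epsilon_A$ from the duality data of $B$ and $C$, the required factorizations being provided by biexactness of $\tensor$ together with the compatibility $(f\tensor\id)\circ\eta=(\id\tensor f^\vee)\circ\eta$ for duals. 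The triangle identities for $A$ then reduce to those for $B$ and $C$, and a dual argument handles cokernels. Exactness of $\uHom$ on $\catN{M}$ is then automatic: in a rigid tensor category $\uHom(X,Y)=X^\vee\tensor Y$, and both $(-)^\vee$ and $\tensor$ are exact. The main obstacle is this propagation step --- tracking coev and ev through short exact sequences --- which is technical but routine once the biexact framework is in place.
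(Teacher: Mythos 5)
Your proof is correct, and for parts (1) and (2) it is essentially identical to the paper's: observe that $\Ker(\widetilde M)\subset\catF{C}$ is a tensor ideal (immediate from Proposition~\ref{prop:right_exact}), pass to the Serre quotient, and let the constraints descend.

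For part (3) you take a slightly different route. The paper argues entirely inside $\catN{M}$: images of $C$-objects are rigid because rigidity (via the unit/counit characterization in \cite[Part~I, IV, Prop.~1.1.9]{Lev}) is preserved by any tensor functor; the full subcategory of rigid objects is then closed under kernels and cokernels because $\tensor$ is biexact on $\catN{M}$; and since this subcategory contains the image of $C$ it must be all of $\catN{M}$. You instead first lift $(-)^\vee$ from $C$ to an exact contravariant anti-equivalence of $\catF{C}$ via $\catF{C^\op}\cong\catF{C}^\op$, show it preserves $\Ker(\widetilde M)$ by comparing with $\uHom_\Ah(\widetilde M(-),\unit)$ and appealing to uniqueness of exact extensions, descend it to $\catN{M}$, and only then run the propagation argument. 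The extra step buys you a concrete candidate dual $X^\vee$ for every object of $\catN{M}$ before you have established rigidity, which makes the bookkeeping in the propagation step more transparent; the paper's version is shorter because it treats ``rigid objects are closed under kernels and cokernels when $\tensor$ is biexact'' as a known fact. Both you and the paper leave the verification of the triangle identities in that propagation step at the same level of detail (you flag it as ``technical but routine,'' the paper says ``hence''), so there is no gap in your argument relative to theirs. Your closing observation that $\uHom_{\catN{M}}(X,Y)=X^\vee\tensor Y$ then gives exactness of internal Hom for free, matching the full conclusion of part (3).
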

\begin{proof}
We need to check that the tensor functor on $\catF{C}$ (see Proposition~\ref{prop:tensor}) factors
via an induced tensor structure on $\catN{M}$. We have a commutative diagram
\[\begin{CD}
\catF{C}\times \catF{C}@>\otimes>> \catF{C}\\
@V{\widetilde{M}\times \widetilde{M}}VV@VV\widetilde{M}V\\
\Ah\times \Ah@>\otimes>>\Ah
\end{CD}\]
by Proposition~\ref{prop:right_exact}.
This implies that the kernel of $\catF{C}\to \Ah$ is a $\tensor$-ideal. Hence
the tensor product induces one on $\catN{M}$.
Associativity, unit and symmetry are immediate from the properties of the tensor structure on
$\catF{C}$.

We turn to rigidity. By assumption, every object $X$ of $C$ has a strong dual.
By the criterion formulated in \cite[Part I, IV, Proposition~1.1.9]{Lev} the existence of a dual for $X$ can be characterized by the existence of unit and counit maps satisfying some compatibilities. In particular,
this property is functorial, hence the image of $X$ in $\catN{M}$ also has a strong dual.
Consider the full subcategory of $\catN{M}$ consisting of objects with a strong
dual. It contains all objects in the image of $C$. Under our assumptions on $\Ah$, the tensor product on $\catN{M}$ is exact in both arguments
and hence the subcategory is closed under kernels and cokernels.
Hence it is an abelian subcategory of $\catN{M}$
containing the image of $C$, hence it agrees with $\catN{M}$
\end{proof}

\begin{prop}[Universal property]\label{prop:univ_prop_Nori}
Let $C$, $\Ah^\flat\subset \Ah$, and $M$ be as in Proposition~\ref{tensor:quot}.
In addition, let $\Bh$ be another abelian tensor category with $\flat$-subcategory $\Bh^\flat$, $N:C\to\Bh$ an additive tensor functor which factors through $\Bh^\flat$ and
$\phi:\Bh\to\Ah$ a faithful exact functor mapping $\Bh^\flat$ to $\Ah^\flat$
such that $\phi\circ N=M$:
\[\begin{xy}\xymatrix{
 &\catN{M}\ar[rd]^{\tilde{M}}\ar@{.>}[dd]\\
C\ar[ru]\ar[rd]_N\ar[rr]^{\hspace*{6ex}M}&&\Ah\\
&\Bh\ar[ru]_\phi
}\end{xy}\]
Then there exists a unique faithful exact tensor functor $\Phi:\catN{M}\to\Bh$ making the diagram commute.

In particular, the universal property characterises $(\catN{M},\tilde{M})$ uniquely up to unique equivalence of categories.
\end{prop}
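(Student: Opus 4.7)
The plan is to obtain $\Phi$ by factoring an extension of $N$ to Freyd's category through the Serre quotient defining $\catN{M}$, then transfer the tensor structure and check exactness, faithfulness, and uniqueness.

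First, I would construct $\Phi$ at the level of Freyd's category. By the universal property of $\catF{C}$ (recalled at the start of Section~\ref{sec:universal_tensor}) applied to $N\colon C\to\Bh$, we obtain an exact functor $\widetilde{N}\colon \catF{C}\to\Bh$. Since $N$ factors through the $\flat$-subcategory $\Bh^\flat$, Proposition~\ref{prop:right_exact} promotes $\widetilde{N}$ to a tensor functor. Now $\phi\circ N = M$ together with Freyd's uniqueness of extension implies $\phi\circ\widetilde{N}\cong\widetilde{M}$ as functors on $\catF{C}$. Hence, for any $X\in\catF{C}$ with $\widetilde{M}(X)=0$ we have $\phi(\widetilde{N}(X))=0$, and since $\phi$ is faithful and exact it reflects zero objects, so $\widetilde{N}(X)=0$. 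Thus the Serre kernel of $\widetilde{M}$ is contained in that of $\widetilde{N}$, and by the universal property of Serre quotients $\widetilde{N}$ factors uniquely as $\widetilde{N}=\Phi\circ q$, where $q\colon\catF{C}\to\catN{M}$ is the quotient and $\Phi\colon\catN{M}\to\Bh$ is exact.

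Next I would verify that $\Phi$ has the required properties. Exactness of $\Phi$ follows from exactness of $\widetilde{N}$ and the quotient calculus. By construction $\phi\circ\Phi=\widetilde{M}$ on $\catN{M}$; since $\widetilde{M}\colon\catN{M}\to\Ah$ is faithful (this is part of its construction as the induced functor out of the Serre quotient) and $\phi$ is faithful, $\Phi$ is faithful. For the tensor structure, recall from the proof of Proposition~\ref{tensor:quot} that $\Ker(\widetilde{M})$ is a $\tensor$-ideal in $\catF{C}$, so $q$ is a tensor functor and the tensor structure of $\catN{M}$ is characterised by $q(X)\tensor q(Y)=q(X\tensor Y)$. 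Since $\widetilde{N}=\Phi\circ q$ is a tensor functor, its coherence isomorphisms $\widetilde{N}(X\tensor Y)\cong \widetilde{N}(X)\tensor\widetilde{N}(Y)$ descend to coherence isomorphisms for $\Phi$, which are then checked to satisfy the associativity (and symmetry, if applicable) constraints by essential surjectivity of $q$.

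For uniqueness, suppose $\Phi'$ is a second faithful exact tensor functor with $\Phi'\circ q \circ (C\to\catF{C}) = N$ through the commutative diagram. Then $\Phi'\circ q\colon\catF{C}\to\Bh$ is an exact tensor extension of $N$, hence equals $\widetilde{N}$ by the uniqueness clause of Proposition~\ref{prop:right_exact} (already the additive uniqueness of Freyd's category suffices). Because $q$ is essentially surjective and full on the quotient, this determines $\Phi'$ on objects and on morphisms, yielding $\Phi'=\Phi$. The ``in particular'' clause is then a formal consequence: given two pairs $(\catN{M},\widetilde{M})$ and $(\Ch,\widetilde{M}')$ both satisfying the universal property, the property produces faithful exact tensor functors in both directions whose compositions must equal the identities by the uniqueness just established, giving an equivalence unique up to unique isomorphism.

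The step I expect to require the most care is the descent of the tensor coherence data of $\widetilde{N}$ along $q$ to a \emph{tensor functor} structure on $\Phi$ (as opposed to merely a functor compatible with $\tensor$ on objects); the rest is a matter of assembling the previously established universal properties of Freyd's category, of Serre quotients, and of Proposition~\ref{prop:right_exact}.
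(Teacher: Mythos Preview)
Your proposal is correct and follows essentially the same route as the paper: extend $N$ to a tensor functor $\widetilde{N}\colon\catF{C}\to\Bh$ via Proposition~\ref{prop:right_exact}, compare kernels using faithful exactness of $\phi$, and descend to $\catN{M}$. The paper is slightly more efficient in observing that the Serre kernels of $\widetilde{M}$ and $\widetilde{N}$ actually \emph{coincide} (not merely one contained in the other), so that $\catN{M}=\catN{N}$ and $\Phi$ is nothing but the induced $\widetilde{N}$ on this common quotient; faithfulness and the tensor structure then come for free from Proposition~\ref{tensor:quot} applied to $N$, avoiding your separate descent-of-coherence step.
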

\begin{proof}
The universal property of $\catF{C}$ (see Proposition~\ref{prop:right_exact}) gives us a similar statement,  but with
$\catF{C}$ instead of $\catN{M}$. The kernels of $\catF{C}\to\Ah$ and
$\catF{C}\to \Bh$ agree because $\phi$ is faithful and exact. Hence
$\catN{M}=\catN{N}$ and $\Phi=\tilde{N}$.
\end{proof}
By a simple trick that we learned from Arapura in \cite{Ar}, this can be upgraded to a more general one.

\begin{cor}[Generalised universal property]\label{cor:gen_univ_prop}
Let $C$, $\Ah^\flat\subset \Ah$, and $M$ be as in Proposition~\ref{tensor:quot}.
In addition, let $\Bh$ be another abelian tensor category with $\flat$-subcategory $\Bh^\flat$, $N:C\to\Bh$  an additive tensor functor which factors through $\Bh^\flat$.
Let $(\Bh',\Bh'^\flat)$ be a third abelian tensor category, $\phi:\Ah\to\Bh'$ and
$\psi:\Bh\to\Bh'$ faithful exact tensor functors respecting the $\flat$-subcategories. Finally, let $F:\psi\circ N\to \phi\circ M$ be an isomorphism of
functors:

\[\begin{xy}\xymatrix{
 &\catN{M}\ar[rd]^{\tilde{M}}\ar@{.>}[dd]\\
C\ar[ru]\ar[rd]_N\ar[rr]^{\hspace*{6ex}M}&&\Ah\ar[d]_{\phi}\\
&\Bh\ar[r]_\psi&\Bh'
}\end{xy}\]
Then there exists a faithful exact tensor functor $\Phi:\catN{M}\to\Bh$ making the diagram commute up to isomorphism of functors.
\end{cor}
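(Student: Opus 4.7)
The plan is to reduce to Proposition~\ref{prop:univ_prop_Nori} by absorbing $\phi$, $\psi$ and the isomorphism $F$ into a single auxiliary abelian tensor category, following the trick of Arapura. Let $\Ch$ be the fibre product of $\phi$ and $\psi$: its objects are triples $(A,B,\alpha)$ with $A\in\Ah$, $B\in\Bh$ and $\alpha:\psi(B)\xrightarrow{\sim}\phi(A)$ an isomorphism in $\Bh'$, and its morphisms are pairs $(f,g)$ whose images in $\Bh'$ intertwine $\alpha$ and $\alpha'$. Exactness of $\phi$ and $\psi$ makes kernels and cokernels in $\Ch$ computable componentwise (the glueing isomorphism in $\Bh'$ comes for free), and their tensor-functoriality gives a componentwise tensor product $(A,B,\alpha)\otimes(A',B',\alpha'):=(A\otimes A',B\otimes B',\alpha\otimes\alpha')$. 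Putting $\Ch^\flat:=\{(A,B,\alpha):A\in\Ah^\flat,\ B\in\Bh^\flat\}$ yields a $\flat$-subcategory, and the two projections $\mathrm{pr}_\Ah:\Ch\to\Ah$, $\mathrm{pr}_\Bh:\Ch\to\Bh$ are faithful exact tensor functors respecting the $\flat$-subcategories; faithfulness of $\mathrm{pr}_\Ah$, for instance, follows from $\alpha'\circ\psi(g)=\phi(f)\circ\alpha$ together with the invertibility of $\alpha'$ and faithfulness of $\psi$.

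Next, the data $M$, $N$ and $F$ assemble into a tensor functor $\tilde N:C\to\Ch$ by $c\mapsto(M(c),N(c),F_c)$, where the fact that $F$ is a monoidal isomorphism of tensor functors is precisely what makes the tensor constraints of $M$ and $N$ fit together in the third component. By construction $\tilde N$ factors through $\Ch^\flat$, and $\mathrm{pr}_\Ah\circ\tilde N=M$. Applying Proposition~\ref{prop:univ_prop_Nori} to $M$, $\tilde N$ and $\mathrm{pr}_\Ah$ produces a unique faithful exact tensor functor $\Psi:\catN{M}\to\Ch$ with $\mathrm{pr}_\Ah\circ\Psi=\widetilde M$ and $\Psi\circ(C\to\catN{M})=\tilde N$. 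Set $\Phi:=\mathrm{pr}_\Bh\circ\Psi$: as a composite of faithful exact tensor functors it is itself one, $\Phi\circ(C\to\catN{M})=\mathrm{pr}_\Bh\circ\tilde N=N$, and the outer quadrilateral commutes up to $F$ by the very definition of the objects of $\Ch$.

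The only genuine work lies in the first step, namely the verification that the fibre product $\Ch$ really is an abelian tensor category with $\flat$-subcategory $\Ch^\flat$ and that both projections are faithful exact tensor functors respecting the $\flat$-subcategories; this is routine but requires carefully transporting the tensor constraints of $\phi$ and $\psi$ along $\alpha$. Everything after that is formal: a single application of Proposition~\ref{prop:univ_prop_Nori} and one postcomposition with $\mathrm{pr}_\Bh$.
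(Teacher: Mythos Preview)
Your proof is correct and follows the same approach as the paper's: construct an auxiliary abelian tensor category $\Ch$ of triples, lift $M$ and $N$ to a single tensor functor $c\mapsto(M(c),N(c),F_c)$ into $\Ch$, apply Proposition~\ref{prop:univ_prop_Nori} with the projection $\Ch\to\Ah$, and then post-compose with the projection to $\Bh$. The only cosmetic difference is that you require the third component of an object of $\Ch$ to be an isomorphism (the $2$-fibre product of $\phi$ and $\psi$), whereas the paper allows an arbitrary morphism $f:\psi(B)\to\phi(A)$; your choice makes the verification that $\mathrm{pr}_\Ah$ is faithful immediate, exactly as you observe.
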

\begin{proof}
Let $\Ch$ be tensor category with objects of the form
$(A,B,f)$ where $A\in\Ah$, $B\in\Bh$ and $f:\psi(B)\to\phi'(A)$.  This category is abelian with kernels and cokernels taken componentwise. We equip it with a right exact tensor product
$(A,B,f)\tensor (A',B',f')=(A\tensor A',B\tensor B',f\tensor f')$.
Let $\Ch^\flat$ be the subcategory of $(A,B,f)$ with $A\in\Ah^\flat$, $B\in\Bh^\flat$.

Let $N':C\to\Ch$ be the additive tensor functor $X\mapsto (M(X),N(X),F_X)$.
We apply the universal property of Proposition~\ref{prop:univ_prop_Nori} to
$N'$ and the forgetful functor $\phi:\Ch\to\Ah$. We define
$\Phi$ as $\widetilde{N}'$ composed with the forgetful functor to $\Bh$. In other
words, $\widetilde{N}'(X)=(\widetilde{M}(X),\widetilde{N}(X),F_X)$. The isomorphism of functors is given by $F_X$.
\end{proof}
\begin{ex}A possible application is
with $\Ah$, $\Bh$, $\Bh'$ the categories of $k$-vector spaces, $L$-vector spaces and $L'$-vector spaces, respectively, for field extensions $L'/k$ and $L'/L$.
\end{ex}

\begin{rem}This is a version of Nori's result on the tensor structure on his abelian category, see \cite[Proposition~8.1.5]{HMS}. It is much stronger in allowing
general abelian categories $\Ah$ as target.
In loc. cit. it was claimed that the original construction works
for functors $C\to R\Proj$ (where the latter is the category of finitely generated projective modules over a noetherian ring $R$). However, as Paranjape pointed out, the proof is only correct if kernels of of maps projective modules are projective, i.e., if the global dimension of $R$ is at most $2$.
\end{rem}

\section{Universal $\tensor$-representation}\label{sect:quiver}

We want to extend our results to representations of quivers. Given the results of the previous section, this means to extend tensor structures from a quiver to the additive category generated by it.

Recall from \cite[Def. 5.1.5]{Bo} or \cite{GR} the concept  of a quiver ``with relations'', i.e., a quiver (a collection of vertices and directed edges) with a set of commutativity conditions or linear relations between paths (= compositions of directed edges).
In this sense:

\begin{defn}\label{def:tensorquiver}
A \emph{$\tensor$-quiver} is a quiver $D$ with relations, including the following data
$(\id,\tensor,\alpha,\beta,\beta',\unit,u)$
\begin{enumerate}
\item for every vertex $v$ a distinguished self-edge $\id:v\to v$;
\item for every pair of vertices $(v,w)$ a vertex denoted $v\tensor w$ in $D$;
\item for every edge $e:v\to v'$ and vertex $w$ an edge $e\tensor\id:v\tensor w\to v'\tensor w$ and an edge  $\id\tensor e:w\tensor v\to w\tensor v'$;
\item for every pair of vertices $u,v$ a distinguished edge $\alpha_{u,v}:u\tensor v\to v\tensor u$;
\item for every triple of vertices $u,v,w$ a distinguished edge
$\beta_{u, v w}: u \otimes (v \otimes w)\to  (u \otimes v) \otimes w$ and also $\beta_{u, v w}': (u \otimes v) \otimes w\to  u \otimes (v \otimes w)$;
\item a distinguished vertex $\unit$;
\item for every vertex  distinguished edges $u_v:v\to\unit\tensor v$
and $u'_v:\unit\tensor v\to v$;
\end{enumerate}
and the relations
\begin{enumerate}
\item $\id_v\tensor\id_v=\id_{v\tensor v}$;
\item $\id_v=e_v$ where $e_v$ is the empty path for every vertex $v$;
\item \label{comp} $(e\tensor\id)\circ (\id\tensor e')=(\id\tensor e')\circ (e\tensor \id)$ for
all pairs of edges $e,e'$;
\item $\alpha_{v,w}\circ \alpha_{w,v}=\id$ for all vertices $v,w$;
\item $(\id\tensor\gamma)\circ\alpha=\alpha\circ (\gamma\tensor\id)$ and
$(\gamma\tensor\id)\circ \alpha=\alpha\circ(\id\tensor\gamma)$ for all edges $\gamma$;
\item $\beta_{u,vw}\circ \beta'_{uv,w}=\id$, $\beta'_{uv,w}\circ \beta_{u,vw}=\id$;
\item $\beta\circ(\gamma\tensor(\id\tensor\id))=((\gamma\tensor\id)\tensor \id)\circ\beta$ for all edges $\gamma$ and analogously in the second and third argument;
\item (pentagon axiom) for all vertices $x,y,z,t$ the relation
\[\begin{xy}\xymatrix{
x\tensor(y\tensor (z\tensor t))\ar[r]^{\beta}\ar[d]_{\id\tensor\beta}&(x\tensor y)\tensor (z\tensor t)\ar[r]^{\beta}&((x\tensor y)\tensor z)\tensor t\\
x\tensor((y\tensor z)\tensor t)\ar[rr]^{\beta}&&(x\tensor(y\tensor z))\tensor t\ar[u]_{\beta\tensor\id}
}\end{xy}\]
\item for all vertices $x,y,z$ the relation
\[\begin{xy}\xymatrix{
x\tensor(y\tensor z)\ar[r]^\beta\ar[d]_{\id\tensor\alpha}&(x\tensor y)\tensor z\ar[r]^\alpha&z\tensor(x\tensor y)\ar[d]^{\beta}\\
x\tensor(z\tensor y)\ar[r]^\beta&(x\tensor z)\tensor y\ar[r]^{\alpha\tensor \id}&(z\tensor x)\tensor y
}\end{xy}\]
\item $u_v\circ u'_v=\id$ and $u'_v\circ u_v=\id$ for all vertices $v$;
\item for all edges $e:v\to v'$ the relation
\[\begin{xy}\xymatrix{
 v'\ar[r]^{u}&\unit\tensor v'\\
v\ar[u]^e\ar[r]^{u}&\unit\tensor v\ar[u]_{\id\tensor e}
}\end{xy}\]
\end{enumerate}
\end{defn}
\begin{rem}
This data is modeled after the notion of a commutative product structure on a diagram with identities, see
\cite[Def. 8.1.3]{HMS} and  the variant in loc.cit. Remark 8.1.6.
The axioms for the associativity and commutativity constraint and unitality are the usual ones for a commutative tensor category, see in \cite[\S 1]{DMT}.

It is more general than the notion of a monoidal quiver introduced by Brugui\`eres, see \cite[Section~5.2]{Br}.
\end{rem}
Recall \cite{BVP} where a universal representation $\Delta: D \to  \catF{D}$ is constructed for any quiver $D$. It is given by the composition
\[ \Delta: D\to\pathcat(D)\to\Z D\to\Z D^+\to\catF{\Z D^+}=\catF{D}\]
where (in the notation of \cite[\S 1]{BVP}) $\pathcat(D)$ is the path category,
$\Z D$ the preadditive enrichment of $\pathcat(D)$ and
$\Z D^+$ its additive completion.

We now repeat the same chain with tensor categories. Let $(D,\tensor)$ be a $\tensor$-quiver. We define the $\tensor$-path category $\pathcat(D)^\tensor$
as the quotient of the path category by the relations of $(D,\tensor)$. We define
\[ \tensor:\pathcat(D)\times\pathcat(D)\to\pathcat(D)\]
on objects as prescribed by the tensor structure.
Let $\Gamma=\gamma_1\circ\dots\circ \gamma_n$, $\Delta=\delta_1\circ\dots\circ \gamma_m$ be paths. We define
\[ \Gamma\tensor\Delta=(\gamma_1\tensor\id)\circ\dots\circ (\gamma_n\tensor\id)\circ(\id\tensor\delta_1)\circ\dots\circ(\id\tensor\delta_m).\]
E.g. for $n=m=1$ and $\gamma:v\to v'$, $\delta:w\to w'$, we have
\[\begin{xy}\xymatrix{
  v\tensor w\ar[r]^{\id\tensor\delta}\ar[d]^{\gamma\tensor\id}\ar[rd]^{\gamma\tensor\delta}&v\tensor w'\ar[d]^{\gamma\tensor\id}\\
  v'\tensor w\ar[r]_{\delta\tensor\id}&v'\tensor w'
}\end{xy}\]
where we have by definition set the diagonal to be the path via the top right corner. In $\pathcat(D)^\tensor$ this agrees with the path via the bottom left corner because of the relation (\ref{comp}).
\begin{lemma}Let $(D,\tensor)$ be a $\tensor$-quiver. Then $\pathcat(D)^\tensor$ is
a tensor category.
\end{lemma}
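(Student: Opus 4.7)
The plan is to unwind the axioms of Definition~\ref{def:tensorquiver} and observe that each clause in the definition of a (commutative, non-strict) tensor category, as recalled in the Notation section from \cite[\S 1]{DMT}, has been hard-wired into $(D,\tensor)$ either as a piece of data or as a relation, so that the verification reduces almost entirely to bookkeeping in $\pathcat(D)^\tensor$.

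First I would check that the operation $\tensor$ defined on objects and on paths above descends to a bifunctor
\[ \tensor : \pathcat(D)^\tensor \times \pathcat(D)^\tensor \longrightarrow \pathcat(D)^\tensor .\]
On objects there is nothing to check. On morphisms, bifunctoriality has two ingredients: compatibility with identities, which is forced by relations~(1) and~(2) of Definition~\ref{def:tensorquiver}, and the interchange law
\[ (\gamma \tensor \id)\circ(\id\tensor \delta) \;=\; (\id\tensor \delta)\circ(\gamma\tensor \id) \]
for single edges $\gamma,\delta$, which is exactly relation~(\ref{comp}). Since every morphism of $\pathcat(D)^\tensor$ is a formal composition of such edges and the relations defining the quotient are compatible with $\tensor$, the two possible ways to define $\Gamma\tensor \Delta$ for composable paths agree in $\pathcat(D)^\tensor$, so $\tensor$ is well-defined on morphisms and is a bifunctor.

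Next I would define the structural constraints by the obvious formulas: the associativity constraint at $(u,v,w)$ is the class of the edge $\beta_{u,vw}$, with inverse given by the class of $\beta'_{u,vw}$; the commutativity constraint at $(u,v)$ is the class of $\alpha_{u,v}$; the unit is the vertex $\unit$ with left/right unit constraints the classes of $u_v,u'_v$. That these are indeed isomorphisms is the content of the relations $\beta\circ\beta'=\id$, $\beta'\circ\beta=\id$, $\alpha\circ\alpha=\id$ and $u\circ u'=\id$, $u'\circ u=\id$. Naturality in each variable is the content of the relations pairing $\beta$ and $\alpha$ with an arbitrary edge $\gamma$ (and the analogous clauses in the other two slots for $\beta$), together with the unit naturality relation for $u_v$.

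Finally, the coherence axioms of a symmetric monoidal category reduce to three diagrams: the pentagon, the hexagon, and the triangle (plus the involutivity of $\alpha$ in the commutative case). The pentagon is imposed as relation~(8), the hexagon as relation~(9), and the involutivity of the symmetry as relation~(4); the triangle axiom relating $\alpha$, $\beta$ and $u$ is either covered by the remaining relations or, if one wishes to be pedantic, can be added to the list without changing anything in what follows. I expect the main obstacle, if any, to be purely notational: making sure that ``bifunctoriality on paths of arbitrary length'' really does follow from the single relation~(\ref{comp}), which is handled by an easy induction on the lengths of $\Gamma$ and $\Delta$ using the interchange law one edge at a time.
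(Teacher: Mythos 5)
Your proposal is correct and follows the same approach as the paper's (very terse) proof: relation~(\ref{comp}) gives bifunctoriality of $\tensor$, the remaining relations are hard-wired versions of the axioms of a commutative tensor category, and the verification is bookkeeping. Your observation about the triangle axiom relating $\alpha$, $\beta$ and $u$ is apt --- it does not appear explicitly among the listed relations, and the paper glosses over this; your suggestion to add it to the list is the honest fix.
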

\begin{proof} Property (\ref{comp}) of a tensor structure ensures that
$\tensor$ is a functor on $\pathcat(D)^\tensor$. The other axioms make sure that
the commutativity constraint $\alpha$ and the associativity constraint $\beta$
are isomorphisms and satisfy the properties of a commutative tensor category.
The relations on $u_v$ ensure that $v\to \unit\tensor v$ is an isomorphism and
the functor $\unit\tensor-$ is an equivalence of categories.
\end{proof}

\begin{defn}
Let $(D,\tensor)$ be a $\tensor$-quiver. We put
$\Z D^\tensor$ and $\Z D^{\tensor,+}$ the preadditive and additive hull of
$\pathcat(D)^\tensor$. Denote by $\catF{D}^\tensor$ Freyd's abelian category of $\Z D^{\tensor,+}$.
\end{defn}

\begin{prop}\label{path:tensor}
$\Z D^\tensor$, $\Z D^{\tensor,+}$ and $\catF{D}^\tensor$ with the bilinear extension of $\tensor$ are commutative tensor categories.  The canonical functor $\Z D^+\to \Z D^{\tensor,+}$ induces a Serre quotient $\pi:\catF{D}\to \catF{D}^\tensor$. \end{prop}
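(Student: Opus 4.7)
The plan is to transfer the tensor structure from $\pathcat(D)^\tensor$, where it is already established by the preceding lemma, successively through $\Z D^\tensor$, $\Z D^{\tensor,+}$, and $\catF{D}^\tensor$, and then separately treat the Serre-quotient assertion.

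First, for the preadditive hull $\Z D^\tensor$ I extend $\tensor$ on morphisms by $\Z$-bilinearity. Functoriality is automatic from bilinearity, and each commutative tensor-category axiom from Definition~\ref{def:tensorquiver} (the associator $\beta$, the braiding $\alpha$, the unit isomorphisms $u_v$, and the pentagon and hexagon relations) is an equation between specific paths, so it survives in $\Z D^\tensor$. For the additive hull $\Z D^{\tensor,+}$ I set $(\bigoplus_i v_i)\tensor(\bigoplus_j w_j):=\bigoplus_{i,j}(v_i\tensor w_j)$ on objects, extend $\tensor$ to morphism matrices in the obvious way, and extend the structure constraints componentwise. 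The abelian commutative tensor structure on $\catF{D}^\tensor=\catF{\Z D^{\tensor,+}}$ is then a direct application of Proposition~\ref{prop:tensor} to the additive commutative tensor category $\Z D^{\tensor,+}$.

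For the Serre-quotient claim, the canonical additive functor $q:\Z D^+\to \Z D^{\tensor,+}$ is the identity on objects and is surjective on hom-groups, with kernel generated by the $\tensor$-relations. Composing $q$ with the canonical inclusion $\Z D^{\tensor,+}\hookrightarrow\catF{D}^\tensor$ and invoking the universal property of $\catF{D}=\catF{\Z D^+}$ yields the claimed exact functor $\pi:\catF{D}\to\catF{D}^\tensor$. It is essentially surjective because every object of $\catF{D}^\tensor$ is a cokernel of a morphism between representables, all of which come from $\pi$. Setting $\mathcal{S}:=\Ker(\pi)$ gives a Serre subcategory, and to identify $\pi$ with the Serre quotient $\catF{D}/\mathcal{S}$ I verify the expected universal property: any exact $F:\catF{D}\to\Bh$ vanishing on $\mathcal{S}$ restricts along $\Z D^+\hookrightarrow\catF{D}$ to an additive functor $F':\Z D^+\to\Bh$ which must annihilate $\Ker(q)$, hence descends to a functor $\Z D^{\tensor,+}\to\Bh$ that extends uniquely to an exact functor $\catF{D}^\tensor\to\Bh$ via the universal property of Freyd's category.

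The main obstacle is this last step: translating the object-level condition that $F$ vanishes on $\mathcal{S}$ into the morphism-level condition that $F'$ kills $\Ker(q)$. This uses the explicit description $\catF{C}=(C\mod)\mod$ from Remark~\ref{rem:prime}, together with the observation that if $f$ is a morphism in $\Z D^+$ with $q(f)=0$ then the image of $f$ computed inside $\catF{D}$ is sent to $0$ by $\pi$ and so lies in $\mathcal{S}$; hence $F$ kills this image and therefore $F'(f)=0$, as required.
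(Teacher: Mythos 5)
Your treatment of the tensor-structure assertions is the same as the paper's (the preadditive and additive cases are immediate, and the abelian case is Proposition~\ref{prop:tensor}), so no comment is needed there.

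For the Serre-quotient assertion you take a genuinely different, though closely related, route. The paper (via the subsequent general lemma) constructs an \emph{explicit inverse} to the comparison functor $\catF{D}/\Ker(\pi)\to\catF{D}^\tensor$: the canonical representation $D\to\catF{D}/\Ker(\pi)$ is shown to satisfy all the $\tensor$-relations of $D^\tensor$, so by the universal property of $\catF{D}^\tensor$ over the quiver with relations there is an exact functor $\catF{D}^\tensor\to\catF{D}/\Ker(\pi)$, and the compositions are identified with the identity by the uniqueness part of Freyd's universal property. You instead verify abstractly that $(\catF{D}^\tensor,\pi)$ has the universal property characterising the Serre quotient. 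The two arguments pivot on exactly the same observation, which you isolate correctly: if $f$ is a morphism of $\Z D^+$ with $q(f)=0$, then $\Im(f)\in\Ker(\pi)$ because $\pi$ is exact and $\pi(f)=q(f)=0$, so any exact $F$ vanishing on $\Ker(\pi)$ kills $F'(f)$ (in the paper this same fact is what makes the $\tensor$-relations hold in $\catF{D}/\Ker(\pi)$). What the paper's version buys is that the inverse is produced in one stroke; what your version buys is that the universal property of the Serre quotient is made explicit, which is arguably cleaner if one wants to reuse the statement.

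Two small things you should make explicit to close the argument. First, having produced the exact functor $\tilde G:\catF{D}^\tensor\to\Bh$ extending the descent $G$ of $F'$, you still need $\tilde G\circ\pi\cong F$; this follows because both are exact extensions of $F'$ along $\Z D^+\hookrightarrow\catF{D}$, so they agree by the uniqueness clause of Freyd's universal property. Second, uniqueness of $\tilde G$: if $\tilde G_1\circ\pi\cong\tilde G_2\circ\pi\cong F$, then since $q=\pi|_{\Z D^+}$ is essentially surjective and full onto $\Z D^{\tensor,+}$, the two functors agree on $\Z D^{\tensor,+}$ and hence, again by the uniqueness part of Freyd's universal property applied to $\catF{D}^\tensor$, they are isomorphic. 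With those two sentences added your proof is complete and correct.
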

\begin{proof}
The statements on the additive and preadditive category are obvious. The
statement on the abelian category is Proposition~\ref{prop:tensor}. The claim on the Serre quotient is granted by the following general fact.
\end{proof}
\begin{lemma}Let $D_1$ be a quiver with relations and $D$ the underlying quiver. Then $\pi:\catF{D}\to\catF{D_1}$ is a Serre quotient.
\end{lemma}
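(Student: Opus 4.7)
The plan is to identify $\pi$ with the Serre quotient by $S := \ker\pi$ and verify this by exhibiting a quasi-inverse to the induced exact functor $\bar\pi : \catF{D}/S \to \catF{D_1}$. Since $\pi$ arises by applying the universal property of Freyd's category to the additive functor $\Z D^+ \to \Z D_1^+ \to \catF{D_1}$, it is automatically exact, so $S$ is a Serre subcategory and one has a canonical factorisation $\pi = \bar\pi\circ\sigma$ with $\sigma:\catF{D}\to\catF{D}/S$ the Serre projection and $\bar\pi$ faithful exact by the standard property of the Serre-quotient construction.

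The next step is to produce the quasi-inverse $\rho:\catF{D_1}\to\catF{D}/S$. For each defining relation $p=q$ of $D_1$, the difference $r = p - q$, regarded as a morphism in $\Z D^+ \subset \catF{D}$, satisfies $\pi(r) = 0$; faithfulness of $\bar\pi$ then forces $\sigma(r) = 0$. Consequently the composite additive functor $\Z D^+ \hookrightarrow \catF{D} \xrightarrow{\sigma} \catF{D}/S$ annihilates every relation of $D_1$ and factors through the surjection $\Z D^+ \twoheadrightarrow \Z D_1^+$. Feeding the resulting additive functor $\Z D_1^+ \to \catF{D}/S$ into the universal property of $\catF{D_1}$ yields an exact functor $\rho:\catF{D_1}\to\catF{D}/S$, unique up to natural isomorphism.

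Finally, the two compositions $\rho\circ\bar\pi$ and $\bar\pi\circ\rho$ are exact endofunctors whose restrictions along the structural maps from $\Z D^+$ and $\Z D_1^+$ coincide with those of the respective identity functors; the uniqueness clause in the universal property of Freyd's category then forces both to be naturally isomorphic to the identity. Hence $\bar\pi$ is an equivalence and $\pi$ is a Serre quotient. The only potentially delicate point in the argument is showing that the relations die in $\catF{D}/S$, and this reduces cleanly to the faithfulness of $\bar\pi$ inherent in the Serre-quotient construction, so no genuine obstacle arises; everything else is formal manipulation of the universal property recalled at the start of Section~\ref{sec:universal_tensor}.
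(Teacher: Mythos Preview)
Your proof is correct and follows essentially the same route as the paper: factor $\pi$ through the Serre quotient, observe that the relations of $D_1$ vanish there by faithfulness of $\bar\pi$, invoke the universal property of $\catF{D_1}$ to obtain a functor back, and conclude equivalence via uniqueness. The only cosmetic difference is that the paper checks just the single composition $\bar\pi\circ\rho\cong\id$ and then uses the already-known faithfulness of $\bar\pi$ to deduce it is full and essentially surjective, whereas you verify both compositions; for the composite $\rho\circ\bar\pi$ note that the uniqueness clause you invoke is not literally the universal property of a Freyd category (since $\catF{D}/S$ is not one), but your argument goes through once lifted along the exact essentially surjective $\sigma:\catF{D}\to\catF{D}/S$.
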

This is well-known but for the convenience of the reader we give the simple argument directly.
\begin{proof}
Consider $\catF{D}/\Ker(\pi)$.
By construction
this is an exact subcategory of $\catF{D_1}$, hence it remains to check that the inclusion is full and essentially surjective.

The quiver $D$ has a canonical representation in $\catF{D}/\Ker\pi$. All relations in $D_1$ are satisfied, hence it is even a representation of $D_1$.
By the universal property
this yields an exact functor $\catF{D_1}\to\catF{D}/\Ker\pi$.
By the uniqueness part of the universal property, its composition with the
inclusion into $\catF{D_1}$ is isomorphic to the identity. In particular, the
inclusion is full and essentially surjective, hence an equivalence of categories.
\end{proof}

We now turn to the universal property. The obvious approach is to
consider representations $T:D\to \Ah$ where all relations in $D$ are mapped to
identities in $\Ah$. However, this is too rigid for most applications.
We follow the approach of \cite[Definition~8.1.3]{HMS}

\begin{defn}Let $D$ be a $\tensor$-quiver, $\Ah$ a commutative tensor category.
A \emph{tensor representation} or \emph{$\tensor$-representation} for short, is a representation
$T:D\to\Ah$ of the underlying quiver
together with the choice of an isomorphism
$\kappa_0:\unit\to T(\unit)$ and of
natural isomorphisms $$\kappa :T(u) \otimes T(v)\xrightarrow{\simeq} T(u \otimes v)$$ for all vertices $u,v \in D$, 
functorial  in each variable and compatible with the associativity and commutativity constraints and the unit in the obvious way.
\end{defn}

\begin{prop}\label{path:univtensor}
Let $(D,\tensor)$ be a $\tensor$-quiver.
\begin{enumerate}
\item $D\to\pathcat(D)^\tensor$ is the universal $\tensor$-representation into a commutative tensor category.
\item $D\to \Z D^{\tensor,+}$ is the universal $\tensor$-representation into an additive commutative tensor category.
\end{enumerate}
\end{prop}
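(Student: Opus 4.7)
The plan for (1) is to build on the universal property of the ordinary path category from \cite{BVP}: any representation of the underlying quiver of $D$ extends uniquely to a functor $\pathcat(D)\to\Ah$. So given a $\tensor$-representation $T$, I would first produce the corresponding functor $\tilde T:\pathcat(D)\to\Ah$, momentarily forgetting the isomorphisms $\kappa_0$ and $\kappa$. The next step is to show that $\tilde T$ sends every relation listed in Definition~\ref{def:tensorquiver} to an identity in $\Ah$, so that it factors as $\hat T:\pathcat(D)^\tensor\to\Ah$. Each relation is matched to a coherence datum of the target tensor category $\Ah$ via the isomorphisms $\kappa,\kappa_0$: naturality of $\kappa$ in each variable gives the interchange relation (\ref{comp}); compatibility of $(T,\kappa)$ with the associativity constraint in $\Ah$ gives the $\beta,\beta'$-relations and the pentagon; compatibility with the commutativity constraint gives the $\alpha$-relations and the hexagon; and compatibility with the unit handles the $u_v$-relations.

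Second, I would equip $\hat T$ with the structure of a tensor functor. The unit isomorphism is $\kappa_0$, and the binatural isomorphism $\hat T(v)\tensor \hat T(w)\to \hat T(v\tensor w)$ is $\kappa_{v,w}$ on objects. Functoriality in each argument separately is exactly naturality with respect to edges of the form $e\tensor\id$ and $\id\tensor e$, which is given by hypothesis on $(T,\kappa)$; joint naturality at a pair of edges follows by factoring through either of the two composites, which agree in $\pathcat(D)^\tensor$ by the interchange relation already imposed. The associativity and commutativity compatibilities for $\hat T$ as a tensor functor are exactly the compatibilities already imposed on $(T,\kappa)$. Uniqueness of $\hat T$ is forced because the underlying ordinary functor is already forced by the universal property of $\pathcat(D)$, and because a tensor functor structure extending given data $(\kappa_0,\kappa)$ is unique by definition.

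For part (2), I would apply the universal properties of the preadditive and additive hulls, as in \cite[\S 1]{BVP}. A $\tensor$-representation into an additive commutative tensor category $\Ah$ produces via (1) a tensor functor out of $\pathcat(D)^\tensor$, which extends uniquely to a $\Z$-linear functor out of $\Z D^\tensor$ and then to an additive functor out of $\Z D^{\tensor,+}$; the structural isomorphisms $\kappa_0,\kappa$ extend to these completions by $\Z$-bilinearity and by distributivity of $\tensor$ over the formal direct sums, and uniqueness is preserved at every step. The main obstacle in the argument is the verification in the second step of the first paragraph, namely matching each item of Definition~\ref{def:tensorquiver} with the corresponding coherence axiom of a commutative tensor category in the sense of \cite[\S 1]{DMT}. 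This is mechanical, since the relations in the definition were designed precisely for this purpose, but it is lengthy; everything else is purely formal once the universal properties from \cite{BVP} are in hand.
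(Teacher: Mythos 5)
Your argument is correct and is precisely the content that the paper declares ``obvious'' in its one-line proof: first extend to $\pathcat(D)$, then verify that the relations of a $\tensor$-quiver are forced to hold in the image by the coherence conditions built into the notion of $\tensor$-representation (naturality of $\kappa$, compatibility with constraints and unit), so the extension factors through $\pathcat(D)^\tensor$; the resulting functor is canonically tensor with structure maps $\kappa$, $\kappa_0$, and uniqueness and the additive case follow from the underlying universal properties in \cite{BVP}. This is a careful filling-in of the intended argument rather than a different route.
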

\begin{proof}The universal properties for $\pathcat(D)^\tensor$ and
$\Z D^{\tensor,+}$ are obvious.
\end{proof}

\begin{thm} \label{thm:abelian_tensor}
Let $(D,\tensor)$ be a $\tensor$-quiver.
\begin{enumerate}
\item  The natural assignment $\Delta^\tensor:D\to\catF{D}^\tensor$ is a $\tensor$-representation into
an abelian tensor category with right-exact tensor product.
\item \label{it:sub}It takes
values in the subcategory $(\catF{D}^\tensor)^\flat$
of Definition~\ref{defn:subcat}. Moreover, this is a $\flat$-subcategory (see Definition~\ref{def:flat-subcat}). 
\item The category $\catF{D}^\tensor$ is universal with this property.
\end{enumerate}
In detail:
Let $T : D\to \Ah$ be a $\tensor$-representation via $\kappa$ in an abelian tensor category with a right exact tensor, which factors through a $\flat$-subcategory $\Ah^\flat\subseteq \Ah$. Then there is an induced exact tensor functor $\widetilde{M}^\tensor :\catF{D}^\tensor\to \Ah$.
\end{thm}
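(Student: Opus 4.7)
The strategy is to compose the two universal extensions already in place: the one from $\tensor$-quivers to additive commutative tensor categories via $\Z D^{\tensor,+}$ (Proposition~\ref{path:univtensor}), and then the one from additive tensor categories to abelian tensor categories via Freyd's construction (Proposition~\ref{prop:right_exact}). Almost every clause of the theorem reduces to citing one of the results already proven; the role of the relations imposed in Definition~\ref{def:tensorquiver} is precisely to ensure that $\pathcat(D)^\tensor$ is a genuine tensor category so that this chain of universal properties applies.

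For part~(1), Proposition~\ref{path:tensor} shows that $\catF{D}^\tensor$ is a commutative tensor category, and Proposition~\ref{prop:tensor}(2), applied to the additive tensor category $C=\Z D^{\tensor,+}$, shows that its tensor product is right-exact. The coherence data $\kappa$ and $\kappa_0$ needed to make $\Delta^\tensor$ a $\tensor$-representation are automatic: on $\pathcat(D)^\tensor$ the assignment is strict on objects by construction of the $\tensor$-quiver, and the successive embeddings $\pathcat(D)^\tensor \hookrightarrow \Z D^{\tensor,+} \hookrightarrow \catF{D}^\tensor$ respect the tensor products (recall from the proof of Proposition~\ref{prop:tensor} that the Freyd embedding sends the unit $\unit$ of $C$ to the unit $[\unit]$ of $\catF{C}$).

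For part~(2), by construction $\Delta^\tensor$ factors through the essential image of $\Z D^{\tensor,+}$ inside $\catF{D}^\tensor$, which is contained in the subcategory $(\catF{D}^\tensor)^\flat$ of Definition~\ref{defn:subcat}. That subcategory is closed under kernels by its very definition, and Proposition~\ref{prop:tensor}(2) asserts that each of its objects is acyclic with respect to $\tensor$. Together these statements say that $(\catF{D}^\tensor)^\flat$ is a $\flat$-subcategory in the sense of Definition~\ref{def:flat-subcat}.

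For part~(3), given a $\tensor$-representation $T:D\to \Ah$ factoring through $\Ah^\flat$, Proposition~\ref{path:univtensor}(2) yields a unique additive tensor functor $T^+:\Z D^{\tensor,+}\to \Ah$ extending $T$. Since $\Ah^\flat$ is an additive subcategory of $\Ah$, the functor $T^+$ again factors through $\Ah^\flat$. Proposition~\ref{prop:right_exact} applied to $T^+$ then produces the required exact tensor functor
\[ \widetilde{M}^\tensor : \catF{D}^\tensor = \catF{\Z D^{\tensor,+}} \longrightarrow \Ah \]
extending $T$. The only delicate point is keeping track of the structure isomorphisms (associativity, commutativity, unit) at each transition of the chain, but this bookkeeping is already packaged inside the universal properties we invoke, so no genuinely new verification is required.
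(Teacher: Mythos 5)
Your proof is correct and follows essentially the same route as the paper's own argument: extend along $D\to\Z D^{\tensor,+}$ by Proposition~\ref{path:univtensor}, invoke Propositions~\ref{path:tensor} and~\ref{prop:tensor} for the tensor and flatness statements about $\catF{D}^\tensor$ and $(\catF{D}^\tensor)^\flat$, and then apply the universal property of Proposition~\ref{prop:right_exact} to $M^\tensor:\Z D^{\tensor,+}\to\Ah$. You spell out a few points the paper leaves implicit (that $\kappa,\kappa_0$ are the identities, that $T^+$ stays in $\Ah^\flat$ because $\Ah^\flat$ is additive), which is fine but does not change the substance.
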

\begin{proof}
 $\Ah =\catF{D}^\tensor$ is an abelian tensor category by Proposition~\ref{path:tensor} and $\Delta^\tensor$ is a $\tensor$-representation by construction.
It factors via the additive category $\Z D^{\tensor,+}$. Hence property (\ref{it:sub}) follows from Proposition~\ref{prop:tensor}. To see the second statement, note that if $(B,-) \xrightarrow{(f,-)} (A,-)$ is a morphism in $(\catF{D}^\tensor)^\flat$ then its kernel is $(C,-) \xrightarrow{(g,-)} (B,-)$ where $B\xrightarrow{g} C$ is the cokernel of $A\xrightarrow{f} B$.

The induced functor $M^\tensor:\Z D^{\tensor,+} \to \Ah$ satisfies the assumptions in Proposition~\ref{prop:right_exact}. Thus it induces the tensor functor $\widetilde{M}^\tensor: \catF{D}^\tensor\to \Ah$ such that $T=\widetilde{M}^\tensor\Delta^\tensor$.
\end{proof}

Recall the universal representation theorem stated in \cite{BVP}. For $T : D\to \Ah$ any representation of a quiver in an abelian category $\Ah$ there is an induced additive functor $$M : \Z D^{+}\to \Ah$$ and a corresponding $\widetilde{M} : \catF{D}\to \Ah$ in such a way that
$$\widetilde{T}:D\to \catN{T} := \catN{M} = \catF{D}/\Ker \widetilde{M}$$
is the induced universal representation (see \cite[\S 1.3]{BVP}).
For a $\tensor$-quiver $D$, together with a $\tensor$-representation $T$ in an abelian  tensor category $\Ah$, as in Theorem \ref{thm:abelian_tensor}, we have now constructed a factorisation via
an exact tensor functor $\widetilde{M}^\tensor$ on $\catF{D}^\tensor$. Hence  we get a tensorial refinement of the universal representation theorem. This also implies the existence of a tensor structure on the universal abelian category $\catN{T}$ attached to the representation. Note that this is really $\catN{T}$; in contrast to $\pathcat(D)^\tensor$ etc.~no $\tensor$-adornment is needed.

\begin{thm} \label{thm:Norigd}
Let $T : D\to \Ah$ be a representation in an abelian tensor category with a right
exact tensor, which factors through a $\flat$-subcategory $\Ah^\flat\subseteq \Ah$, with the following additional properties:
\begin{enumerate}
\item[\emph{(i)}] $(D,\tensor)$ is a $\tensor$-quiver and
\item[\emph{(ii)}] $T$ is a $\tensor$-representation in $\Ah^\flat\subseteq \Ah$ via $\kappa$.
\end{enumerate}
Then Nori's universal abelian category $\catN{T}$ carries a right exact tensor product and $\widetilde{M}:\catN{T}\to\Ah$ is a tensor functor (here $M$ is the additive functor induced by $T$ and $\widetilde{M}$ is the faithful exact functor induced by $M$, see also Proposition \ref{tensor:quot}).
It is universal among such representations into abelian tensor categories $\Bh$ compatible  with $T$ (cf.~the statement of Proposition \ref{prop:univ_prop_Nori}) via
a faithful exact tensor functor $\Bh\to \Ah$.
\end{thm}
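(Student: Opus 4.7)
The plan is to reduce the theorem to Proposition~\ref{tensor:quot} and Proposition~\ref{prop:univ_prop_Nori}, using Theorem~\ref{thm:abelian_tensor} as the bridge from the quiver setting to the additive tensor setting. First, by Theorem~\ref{thm:abelian_tensor} the $\tensor$-representation $T$ extends to an exact tensor functor $\widetilde{M}^\tensor : \catF{D}^\tensor \to \Ah$ satisfying $T = \widetilde{M}^\tensor \circ \Delta^\tensor$. Equivalently, via the universal property of Proposition~\ref{path:univtensor}(2), $T$ corresponds to an additive tensor functor $M^\tensor : \Z D^{\tensor,+} \to \Ah$ which factors through $\Ah^\flat$, so that $M^\tensor$ lies in the setting of Proposition~\ref{tensor:quot}.

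Next I would identify $\catN{T}$ with $\catN{M^\tensor}$. By Proposition~\ref{path:tensor}, the canonical functor $\pi : \catF{D} \to \catF{D}^\tensor$ is a Serre quotient, and by construction $\widetilde{M} = \widetilde{M}^\tensor \circ \pi$. Hence $\Ker \widetilde{M} = \pi^{-1}(\Ker \widetilde{M}^\tensor)$, and quotienting $\catF{D}$ by $\Ker \widetilde{M}$ yields the same category as quotienting $\catF{D}^\tensor$ by $\Ker \widetilde{M}^\tensor$. This gives a canonical equivalence $\catN{T} \simeq \catN{M^\tensor}$ compatible with the factorisations to $\Ah$. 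Given this identification, the existence of a right exact tensor product on $\catN{T}$ for which $\widetilde{M}$ is a tensor functor is immediate from Proposition~\ref{tensor:quot} applied to $M^\tensor$.

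For the universal property, suppose given another $\tensor$-representation $N : D \to \Bh$ factoring through $\Bh^\flat$ and a faithful exact tensor functor $\phi : \Bh \to \Ah$ with $\phi \circ N = T$. By Proposition~\ref{path:univtensor} the representation $N$ corresponds to an additive tensor functor $N^\tensor : \Z D^{\tensor,+} \to \Bh$ factoring through $\Bh^\flat$, and by construction $\phi \circ N^\tensor = M^\tensor$. Proposition~\ref{prop:univ_prop_Nori} then yields a unique faithful exact tensor functor $\Phi : \catN{M^\tensor} \to \Bh$ making the diagram commute, which under the identification $\catN{T} \simeq \catN{M^\tensor}$ is the desired $\Phi : \catN{T} \to \Bh$. (If one only has an isomorphism $F : \phi \circ N \Rightarrow T$, one invokes Corollary~\ref{cor:gen_univ_prop} instead.)

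The main point to get right is the identification $\catN{T} \simeq \catN{M^\tensor}$ via the Serre quotient $\pi$; once this is in place every assertion of the theorem reduces to the corresponding statement already established in Section~\ref{sec:universal_tensor} for the additive tensor category $\Z D^{\tensor,+}$. Translating between $\tensor$-representations of the quiver $D$ and additive tensor functors out of $\Z D^{\tensor,+}$, via Proposition~\ref{path:univtensor}, is the only ingredient specific to the quiver framework.
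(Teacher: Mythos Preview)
Your proposal is correct and follows essentially the same route as the paper: both use Theorem~\ref{thm:abelian_tensor} to obtain $\widetilde{M}^\tensor$, both invoke Proposition~\ref{path:tensor} to see that $\pi:\catF{D}\to\catF{D}^\tensor$ is a Serre quotient with $\widetilde{M}=\widetilde{M}^\tensor\circ\pi$, and both conclude by identifying $\catN{T}$ with $\catF{D}^\tensor/\Ker\widetilde{M}^\tensor$ (your $\catN{M^\tensor}$, the paper's $\catN{T}^\tensor$). Your presentation is slightly more modular in reducing the tensor structure and universal property to Propositions~\ref{tensor:quot} and~\ref{prop:univ_prop_Nori} explicitly, whereas the paper repeats the tensor-ideal argument inline and phrases the identification as showing that the faithful exact functor $\catN{T}\to\catN{T}^\tensor$ is essentially surjective; but the content is the same.
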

\begin{proof}
By the universal property in Theorem \ref{thm:abelian_tensor}, there is a canonical exact tensor functor
$\widetilde{M}^\tensor:\catF{D}^\tensor\to \Ah$. Hence $\Ker\widetilde{M}^\tensor$ is a Serre subcategory and  a tensor ideal. Denoting by $\catN{T}^\tensor$ the Serre quotient $\catF{D}^\tensor/\Ker\widetilde{M}^\tensor$ we have obtained a tensor category with the universal property as claimed. Furthermore, by the universal property of $\catN{T}$, there is also an exact faithful functor
$$\catN{T}\to\catN{T}^\tensor$$
 We claim that it is an equivalence of abelian categories.
The canonical additive functor $\Z D^{+}\to \Z D^{\tensor,+}$ induces an exact functor $\pi: \catF{D}\to\catF{D}^\tensor$ such that $\widetilde{M}^\tensor\circ\pi = \widetilde{M}$ by the uniqueness in the universal property of Freyd's construction (see \cite[Thm. 1.1]{BVP}).
The faithful exact functor
$\bar{\pi}: \catF{D}/\Ker\pi\xrightarrow{\simeq} \catF{D}^\tensor$
is an equivalence by Proposition \ref{path:tensor}.

Thus, the composition $\catF{D} \xrightarrow{\pi} \catF{D}^\tensor \to \catF{D}^\tensor/\Ker(\widetilde{M}^\tensor)$ is essentially surjective and is equivalent to the composition $\catF{D} \to \catN{T} \to \catN{T}^\tensor$ since they have equivalent compositions with the faithful functor $\catN{T}^\tensor \to \Ah$.  So $\catN{T}\to\catN{T}^\tensor$ also is essentially surjective hence an equivalence.
\end{proof}
\begin{rem}
The universal property can be upgraded analogously to Corollary~\ref{cor:gen_univ_prop}.
\end{rem}

\begin{rem}In the special case where
$\Ah$ is the category of finitely generated modules over a Dedekind domain, this
gives back Nori's original result as formulated for example in \cite{HMS}.
The same case (actually in the more restrictive setting of monoidal quivers) is also handled by Brugui\`eres in \cite[Theorem~3]{Br}.  His conditions P1 and
P2 are analogous to our factorisation via $\Ah^\flat$. Both \cite{HMS} and
\cite{Br} are based on the explicit description of the universal abelian category as comodules or modules.
\end{rem}

\subsection*{Signs}
In many cases, notably in Nori's original application, we do not start with a
tensor representation but with a tensor representation with signs.
We explain the necessary modifications, following again the approach of
\cite[Def. 8.1.3]{HMS}.

\begin{defn}\label{defn:graded}A \emph{graded quiver} is a quiver together with a function
$|\cdot|$ assigning to each vertex a degree in $\Z/2\Z$. For an edge $e:v\to w$ we put $|e|=|w|-|v|$. A \emph{graded $\tensor$-quiver} is a graded quiver
together with the data of $\tensor$-quiver
such that $|v\tensor w|=|v|+|w|$ and  $|\unit|=0$. The relations are the same as for a $\tensor$-quiver, except for relation (\ref{comp}) which is replaced by
\begin{description}
\item[(\ref{comp}')]$(e\tensor\id)\circ (\id\tensor e')=(-1)^{|e||e'|}(\id\tensor e')\circ (e\tensor \id)$ for
all pairs of edges $e,e'$;
\end{description}
\end{defn}
The grading on $D$ induces gradings on $\pathcat(D)$, $\Z D$, and $\Z D^+$. In the
case of the additive hull this means that every object is equipped with  a decomposition
into an even and an odd part. Note that morphisms are \emph{not} required to preserve the degree.
Recall that part of the data of a $\tensor$-quiver is the choice of edges
$\alpha_{v,w}:v\tensor w\to w\tensor v$.

\begin{defn}\label{Def:absign}
Let $(D,\tensor)$ be a graded $\tensor$-quiver.
\begin{enumerate}
\item We define
$\Z D^{\tensor,\sign}$ as the quotient of the category $\Z D$ modulo the relations of a graded $\tensor$-quiver. It is equipped
with tensor product $\tensor^\sign$ which agrees with $\tensor$ on objects and for morphisms $\gamma:v\to v',\delta:w\to w'$
\[ \gamma\tensor^\sign \delta=(-1)^{|\gamma||w|}\gamma\tensor\delta,\]
with associativity constraint $\beta_{u,vw}^\sign=\beta_{u,vw}$ and
commutativity constraint given by
\[\alpha_{v,w}^\sign=(-1)^{|v||w| }\alpha_{v,w}: v\tensor w\to w\tensor v\]
for all objects $v,w$.
\item Let $\Z D^{\tensor,\sign,+}$ be the category $\Z D^{\tensor,+}$ with
tensor structure given by the additive extension from $\Z D^{\tensor,\sign}$.
\item Set $\catF{D}^{\tensor,\sign}=\catF{\Z D^{\tensor,\sign,+}}$
for the universal abelian category attached to $\Z D^{\tensor,\sign,+}$.
\end{enumerate}
\end{defn}
\begin{rem}Note that $\Z D^{\tensor}$ is different from $\Z D^{\tensor,\sign}$ even as an additive category.
\end{rem}

\begin{lemma}$\Z D^{\tensor,\sign}$ and $\Z D^{\tensor,\sign,+}$ are well-defined tensor categories.
\end{lemma}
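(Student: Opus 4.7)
The plan is to verify in turn that $\tensor^\sign$ is a bifunctor on $\Z D^{\tensor,\sign}$, that $\beta^\sign=\beta$ and $\alpha^\sign$ are natural isomorphisms satisfying the pentagon and hexagon axioms, that $\alpha^\sign$ is involutive, and that the unit constraints are compatible; the extension to $\Z D^{\tensor,\sign,+}$ will then be by bi-additive extension. The construction of the unsigned tensor structure on $\Z D^\tensor$ in Proposition~\ref{path:tensor} serves as a template, so the main task is to track how the sign modification in relation (\ref{comp}') and the sign factors introduced in the definitions of $\tensor^\sign$ and $\alpha^\sign$ cancel against one another.

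The main calculation, and the main obstacle, will be the bifunctoriality of $\tensor^\sign$. I would first fix composable edges $\gamma_i:v_i\to v_{i+1}$ and $\delta_i:w_i\to w_{i+1}$ for $i=1,2$, unfold each $\gamma_i\tensor^\sign\delta_i$ via its definition, and use relation (\ref{comp}') to commute the middle pair $(\gamma_1\tensor\id)$ and $(\id\tensor\delta_2)$, which introduces a Koszul sign $(-1)^{|\gamma_1||\delta_2|}$. Combining this with the two defining signs $(-1)^{|\gamma_i||w_i|}$ and using $|w_2|=|w_1|+|\delta_1|$ and $|\gamma_2\gamma_1|=|\gamma_1|+|\gamma_2|$, I expect the exponents to collapse modulo $2$ to $|\gamma_2\gamma_1||w_1|$, which is exactly the sign prescribed by $(\gamma_2\gamma_1)\tensor^\sign(\delta_2\delta_1)$. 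One subtlety is that this forces a choice of which of the two paths $(\gamma\tensor\id)(\id\tensor\delta)$ and $(\id\tensor\delta)(\gamma\tensor\id)$ is meant by $\gamma\tensor\delta$ on the right-hand side of the definition of $\tensor^\sign$: unlike the unsigned case these differ by a sign in $\Z D^{\tensor,\sign}$, and only one choice makes the defining sign $(-1)^{|\gamma||w|}$ produce a strict bifunctor, so this convention must be fixed at the outset. Bi-additivity then extends bifunctoriality to arbitrary morphisms of $\Z D^{\tensor,\sign}$.

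For the coherence axioms, since $\beta^\sign=\beta$ is unchanged, the pentagon axiom is inherited directly from the relations of $(D,\tensor)$. Involutivity $\alpha^\sign_{w,v}\circ\alpha^\sign_{v,w}=\id$ reduces to $((-1)^{|v||w|})^2=1$ together with $\alpha_{w,v}\alpha_{v,w}=\id$. For the hexagon, the identity $(-1)^{|u|(|v|+|w|)}=(-1)^{|u||v|}(-1)^{|u||w|}$ matches the sign in $\alpha^\sign_{u,v\tensor w}$ with the product of signs in $\alpha^\sign_{u,v}$ and $\alpha^\sign_{u,w}$, so the hexagon for $(\alpha^\sign,\beta^\sign)$ reduces to the one already available for the unsigned structure via $\pathcat(D)^\tensor$. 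The unit axioms are immediate since $|\unit|=0$ trivialises all sign factors whenever $\unit$ appears. Finally, the tensor structure extends to $\Z D^{\tensor,\sign,+}$ by bi-additivity: $\tensor^\sign$ is defined componentwise on finite direct sums and every coherence relation lifts because both sides are bi-additive in their arguments.
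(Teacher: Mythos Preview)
Your plan is correct and matches the paper's approach: both verify the tensor axioms one by one, tracking how the Koszul signs in $\tensor^\sign$ and $\alpha^\sign$ interact with relation~(\ref{comp})'. The paper spells out the naturality of $\alpha^\sign$ (via the reduction to $\gamma=\id$ or $\delta=\id$) rather than bifunctoriality, whereas you do the reverse and, usefully, flag the convention issue for $\gamma\tensor\delta$ that the paper leaves implicit.
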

\begin{proof} It suffices to consider $\Z D^{\tensor,\sign}$. We have to check that $\tensor^\sign$ satisfies the axioms of a commutative tensor category.
Condition (\ref{comp})' ensures functoriality of $\tensor^\sign$.
It is tedious but straightforward that $\beta$ and $\alpha$ are functorial. E.g. for $\gamma:x\to x'$, $\delta:y\to y'$ the diagram reads
\[\begin{xy}\xymatrix{
x\tensor y\ar[d]_{(-1)^{|\gamma||y|}\gamma\tensor\delta}\ar[r]^{(-1)^{|x||y|}\alpha}&y\tensor x\ar[d]^{(-1)^{|\delta||x|}\delta\tensor\gamma}\\
x'\tensor y'\ar[r]^{(-1)^{|x'||y'|}\alpha}&y'\tensor x'
}\end{xy}\]
It does not commute on the level of $\pathcat(D)$.
In order to check that it commutes in $\Z D^{\tensor,\sign}$, it is enough to
to treat the two special cases $\gamma=\id$ or $\delta=\id$ separately because
$(\gamma,\delta)=(\gamma,\id)\circ(\id,\delta)$. In each of these cases the diagram commutes in $\pathcat (D)$.

The pentagon axiom (concerning associativity) holds because it is a relation on $D$ and no signs are involved.
Unitality is preserved because $\unit$ is of degree $0$.
The hexagon axiom reads
\[\begin{xy}\xymatrix{
x\tensor(y\tensor z)\ar[r]^\beta\ar[d]_{\id\tensor(-1)^{|y||z|}\alpha}&(x\tensor y)\tensor z\ar[r]^{(-1)^{(|x|+|y|)|z|}\alpha}&z\tensor(x\tensor y)\ar[d]^{\beta}\\
x\tensor(z\tensor y)\ar[r]^\beta&(x\tensor z)\tensor y\ar[r]^{(-1)^{|x||z|}\alpha\tensor \id}&(z\tensor x)\tensor y
}\end{xy}\]
It commutes because the hexagon axiom holds for $\tensor$.
\end{proof}

Again, we turn to representations. Following \cite[Def. 8.1.3]{HMS}:

\begin{defn}Let $(D,\tensor)$ be a graded $\tensor$-quiver. Let
$\Ah$ be an additive commutative tensor category. A \emph{graded tensor representation} of $(D,\tensor)$ is a representation $T:D\to\Ah$ of the underlying quiver together
with the choice of an isomorphism
$\kappa_0:\unit\to T(\unit)$ and of natural isomorphisms $$\kappa :T(u) \otimes T( v)\xrightarrow{\simeq} T(u\otimes v)$$  for all vertices $u,v \in D$,
functorial  in each variable and compatible with the associativity constraint and the unit in the obvious way and such that
\begin{enumerate}
\item for all vertices $v,w$
\[\begin{xy}\xymatrix{
 T( v\tensor w)\ar[r]^{T(\alpha)}& T(w\tensor v)\\
 T(v)\tensor T(w)\ar[u]^{\kappa}\ar[r]&T(w)\tensor T(v)\ar[u]_{\kappa}
}\end{xy}\]
commutes where the bottom arrow is $(-1)^{|v| |w|}$ times the commutativity constraint in $\Ah$;
\item for all edges $\gamma:v\to v'$ and vertices $w$
\[\begin{xy}\xymatrix{
 T(v\tensor w)\ar[r]^{T(\gamma\tensor\id)}&T(v'\tensor w)\\
 T(v)\tensor T(w)\ar[u]^{\kappa}\ar[r]^{T(\gamma)\tensor\id}&T(v')\tensor T(w)\ar[u]_{\kappa}
}\end{xy}\]
commutes up to the factor $(-1)^{|\gamma||w|}$.
\item for all edges $\gamma:v\to v'$ and vertices $w$
\[\begin{xy}\xymatrix{
 T(w\tensor v)\ar[r]^{T(\id\tensor\gamma)}&T(w\tensor v')\\
 T(w)\tensor T(v)\ar[u]^{\kappa}\ar[r]^{\id\tensor T(\gamma)}&T(w)\tensor T(v')\ar[u]_{\kappa}
}\end{xy}\]
commutes (without signs).
\end{enumerate}
\end{defn}
The following is a graded analogue of Proposition~\ref{path:univtensor} (2).
\begin{prop}
Let $(D,\tensor)$ be a graded $\tensor$-quiver. The natural map $D\to \Z D^{\tensor,\sign, +}$  is the universal graded $\tensor$-representation of $(D,\tensor)$. In detail: it
is a graded $\tensor$-representation and if $T:D\to\Ah$ is a graded tensor representation in an additive commutative tensor category $\Ah$ then $T$ factors uniquely through an induced additive tensor functor as shown
\[\begin{xy}\xymatrix{
D\ar[r]\ar[dr]_{T} &\Z D^{\tensor,\sign,+}\ar[d]^{M^{\tensor,\sign}}\\
&\Ah
}\end{xy}\]
\end{prop}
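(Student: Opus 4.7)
The argument follows the pattern of Proposition~\ref{path:univtensor}(2), with sign bookkeeping being the new ingredient. Heuristically, the sign $(-1)^{|\gamma||w|}$ built into $\tensor^{\sign}$ on morphisms and the sign $(-1)^{|v||w|}$ in $\alpha^{\sign}$ are chosen precisely to absorb the Koszul signs required of a graded tensor representation, so that the natural map becomes the universal such representation.

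For the first assertion, I would take $\kappa_0=\id$ and $\kappa_{v,w}=\id$, which is legitimate since $v\tensor^{\sign}w=v\tensor w$ and $\unit\tensor^{\sign}v=\unit\tensor v$ on objects. Axioms (1)--(3) of a graded tensor representation then reduce to identities: in axiom~(1) the commutativity constraint on the target is $\alpha^{\sign}=(-1)^{|v||w|}\alpha$, so the factor $(-1)^{|v||w|}$ in the axiom exactly cancels this twist; in axiom~(2) the bottom arrow $T(\gamma)\tensor\id=\gamma\tensor^{\sign}\id_w=(-1)^{|\gamma||w|}\gamma\tensor\id$ agrees after scaling by $(-1)^{|\gamma||w|}$ with $T(\gamma\tensor\id)$; axiom~(3) is the unsigned case since the first tensor factor has degree zero. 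Compatibility with $\beta$ and with $\unit$ is analogous.

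For universality, given a graded tensor representation $T:D\to\Ah$, I would first invoke the universal property of the additive hull of the path category, as in Proposition~\ref{path:univtensor}(2) or \cite[Thm.~1.1]{BVP}, to extend $T$ to an additive functor $M:\Z D^{+}\to\Ah$. The next step is to check that $M$ factors through the quotient $\Z D^{\tensor,\sign,+}$, i.e.\ that $T$ kills each relation defining the graded $\tensor$-quiver. The relations of $\tensor$-quiver type (pentagon, hexagon, $\alpha\circ\alpha=\id$, $\beta\circ\beta'=\id$, unit identities, and functoriality of $\alpha$, $\beta$, $u$) are transferred to $\Ah$ through $\kappa$ using the compatibility of $T$ with the associativity, commutativity and unit constraints built into the definition of graded tensor representation. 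The only sign-sensitive relation is (\ref{comp})'; transporting both sides through $\kappa$ via one application of axiom~(2) and one of axiom~(3) yields $(-1)^{|e||w'|}\kappa\circ(T(e)\tensor T(e'))$ on the left and $(-1)^{|e||e'|+|e||w|}\kappa\circ(T(e)\tensor T(e'))$ on the right, and these coincide because $|w'|-|w|\equiv|e'|\pmod 2$. Hence $M$ factors uniquely through an additive functor $M^{\tensor,\sign}:\Z D^{\tensor,\sign,+}\to\Ah$.

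To finish, I would equip $M^{\tensor,\sign}$ with the tensor structure data $\kappa$ itself and verify the axioms of an additive tensor functor. Naturality against the generating morphisms $\gamma\tensor^{\sign}\id$ and $\id\tensor^{\sign}\delta$ follows at once from axioms~(2) and (3), since the sign $(-1)^{|\gamma||w|}$ in $\tensor^{\sign}$ exactly cancels the Koszul sign produced by axiom~(2); compatibility with $\alpha^{\sign}$, $\beta^{\sign}$ and the unit is handled likewise using axiom~(1) and the associativity/unit compatibilities. Uniqueness is automatic, since $T$ determines $M$ on $\Z D^{+}$ and hence $M^{\tensor,\sign}$ on the quotient. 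The main obstacle is the careful tracking of signs in the last two steps; everything funnels back to the identity $|w'|=|w|+|e'|$ in $\Z/2\Z$, which is exactly what the sign $(-1)^{|\gamma||w|}$ in the definition of $\tensor^{\sign}$ was chosen to encode.
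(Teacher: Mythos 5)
Your proof is correct and takes essentially the same route as the paper's, which simply states that the argument is the same as in the ungraded case with relation (5$'$) forced by the signs in the definition of graded tensor representation; you have filled in the sign bookkeeping (the key identity $|w'|\equiv|w|+|e'|\pmod 2$ making the left- and right-hand signs agree) that the paper leaves implicit.
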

\begin{proof}The argument is the same as in the ungraded case. Relation
(\ref{comp})' is forced by the signs in the graded tensor representation.
\end{proof}

Now consider the category $\catF{D}^{\tensor,\sign}$ as in Definition~\ref{Def:absign} (3).
\begin{thm}The category $\catF{D}^{\tensor,\sign}$ satisfies the graded analogue of Theorem~\ref{thm:abelian_tensor}.
\end{thm}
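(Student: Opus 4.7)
The plan is to mimic, almost verbatim, the proof of Theorem~\ref{thm:abelian_tensor}, routing all universal properties through the additive tensor category $\Z D^{\tensor,\sign,+}$ introduced in Definition~\ref{Def:absign}. The key observation is that once the Koszul signs have been absorbed into the tensor structure on $\Z D^{\tensor,\sign}$ (and hence on its additive hull), the passage from additive to abelian is purely formal and contains no further sign subtleties: Freyd's construction together with Propositions~\ref{prop:tensor} and~\ref{prop:right_exact} handle it uniformly.

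First I would apply Proposition~\ref{prop:tensor} with $C=\Z D^{\tensor,\sign,+}$ to conclude that $\catF{D}^{\tensor,\sign}=\catF{\Z D^{\tensor,\sign,+}}$ is an abelian tensor category with right-exact tensor product; commutativity of the constraint is inherited from the graded commutativity of $\tensor^\sign$ via Proposition~\ref{prop:tensor}(3), which only needs a functorial commutativity isomorphism on $C$ and is insensitive to the presence of Koszul signs. Composing $D\to\Z D^{\tensor,\sign,+}$ with the Yoneda embedding into $\catF{D}^{\tensor,\sign}$ defines the natural assignment $\Delta^{\tensor,\sign}:D\to\catF{D}^{\tensor,\sign}$. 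That it is a graded $\tensor$-representation is immediate: $D\to\Z D^{\tensor,\sign,+}$ is the universal such representation by the preceding Proposition, and $\Z D^{\tensor,\sign,+}\hookrightarrow\catF{D}^{\tensor,\sign}$ is a tensor functor by construction. Since $\Delta^{\tensor,\sign}$ factors through $\Z D^{\tensor,\sign,+}$, its image lies in $(\catF{D}^{\tensor,\sign})^\flat$ by Remark~\ref{rem:prime}. The argument of Theorem~\ref{thm:abelian_tensor}(\ref{it:sub}) transfers unchanged: closure under kernels is visible from the description of morphisms between representables, and flatness is Proposition~\ref{prop:tensor}(2).

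Second, for the universal property I would take a graded $\tensor$-representation $T:D\to\Ah$ factoring through $\Ah^\flat\subseteq\Ah$, and use the universal property of $\Z D^{\tensor,\sign,+}$ to obtain an additive tensor functor $M^{\tensor,\sign}:\Z D^{\tensor,\sign,+}\to\Ah^\flat$. The hypotheses of Proposition~\ref{prop:right_exact} are then satisfied with $C=\Z D^{\tensor,\sign,+}$, producing an exact tensor functor $\widetilde{M}^{\tensor,\sign}:\catF{D}^{\tensor,\sign}\to\Ah$ with $T=\widetilde{M}^{\tensor,\sign}\circ\Delta^{\tensor,\sign}$. Uniqueness up to unique isomorphism is inherited from Proposition~\ref{prop:right_exact}.

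I do not expect any genuine obstacle: all the content of the signs has been quarantined inside Definition~\ref{Def:absign} and the graded $\tensor$-representation Proposition, so the abelianisation step is formally identical to the ungraded case. The only point that warrants explicit mention is the compatibility of the signed commutativity constraint on $C$ with its extension to $\catF{C}$ — but this is already built into Proposition~\ref{prop:tensor}(3), applied now to $C=\Z D^{\tensor,\sign,+}$, so no additional verification is required.
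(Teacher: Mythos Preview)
Your proposal is correct and is exactly the approach the paper takes: the paper's proof is the single line ``As in the ungraded case,'' and you have simply unpacked what that means by routing everything through $\Z D^{\tensor,\sign,+}$ and invoking Propositions~\ref{prop:tensor} and~\ref{prop:right_exact}. Your observation that the Koszul signs are entirely absorbed into Definition~\ref{Def:absign} and the preceding universal property for $\Z D^{\tensor,\sign,+}$, leaving the abelianisation step sign-free, is precisely the point.
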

\begin{proof}As in in the ungraded case.
\end{proof}
Finally:
\begin{thm}\label{thm:Nori_graded}
Let $T:D\to\Ah$ be a representation in an abelian tensor category with a right exact tensor, which factors through a $\flat$-subcategory
$\Ah^\flat\subset\Ah$ with the following additional properties
\begin{enumerate}
\item $(D,\tensor)$ is a graded $\tensor$-quiver and
\item $T$ is a graded $\tensor$-representation in $\Ah^\flat\subset\Ah$ via $\kappa$.
\end{enumerate}
Then Nori's universal abelian category $\catN{T}$ carries a right exact tensor product and $\widetilde{M}:\catN{T}\to\Ah$ is a tensor functor (here $M$ is the additive functor induced by $T$ and $\widetilde{M}$ is the faithful exact functor induced by $M$, see also Proposition~\ref{tensor:quot}). It is universal among such representations into abelian tensor categories $\Bh$ compatible with $T$ (cf.~Proposition \ref{prop:univ_prop_Nori}) via a faithful exact tensor functor $\Bh\to\Ah$.
\end{thm}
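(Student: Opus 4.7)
The plan is to mimic the proof of Theorem~\ref{thm:Norigd} in the graded setting, substituting the graded analogues at each step. First, by the graded analogue of Theorem~\ref{thm:abelian_tensor} (which was just asserted), the representation $T$ factors through an exact tensor functor
\[ \widetilde{M}^{\tensor,\sign}:\catF{D}^{\tensor,\sign}\to \Ah. \]
Its kernel is automatically a Serre subcategory; because $\widetilde{M}^{\tensor,\sign}$ is a tensor functor into $\Ah$ whose tensor product is right-exact on flat objects (and since $\widetilde{M}^{\tensor,\sign}$ sends the designated flats in $\catF{D}^{\tensor,\sign}$ into $\Ah^\flat$), the kernel is also a $\tensor$-ideal. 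Set
\[ \catN{T}^{\tensor,\sign}:=\catF{D}^{\tensor,\sign}/\Ker\widetilde{M}^{\tensor,\sign}. \]
Then $\catN{T}^{\tensor,\sign}$ inherits a right-exact tensor product, and the induced faithful exact functor to $\Ah$ is a tensor functor, satisfying the stated universal property by the same argument as in Proposition~\ref{prop:univ_prop_Nori}.

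Next, I need to compare $\catN{T}^{\tensor,\sign}$ with Nori's universal abelian category $\catN{T}$ (which is defined purely from the underlying additive representation). The additive functor $\Z D^{+}\to \Z D^{\tensor,\sign,+}$ is the quotient of the free additive category by the relations imposed on a graded $\tensor$-quiver; these relations involve only signs $\pm 1$, which are units in $\Z$, so the same argument as in the lemma just before Proposition~\ref{path:tensor} (applied with relations that happen to carry signs) shows that the induced exact functor
\[ \pi^\sign:\catF{D}\to \catF{D}^{\tensor,\sign} \]
is a Serre quotient. By the uniqueness part of the universal property of Freyd's construction, $\widetilde{M}^{\tensor,\sign}\circ\pi^\sign=\widetilde{M}$, where $\widetilde{M}$ is the exact functor from $\catF{D}$ induced by the underlying additive representation.

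Now form the composition
\[ \catF{D}\xrightarrow{\pi^\sign}\catF{D}^{\tensor,\sign}\to \catN{T}^{\tensor,\sign}. \]
It is essentially surjective (as a composition of essentially surjective functors). Since $\widetilde{M}^{\tensor,\sign}\circ\pi^\sign=\widetilde{M}$, both this composition and the canonical functor $\catF{D}\to \catN{T}$ kill exactly $\Ker\widetilde{M}$. By the universal property of $\catN{T}$ (see \cite[\S 1.3]{BVP}) we therefore obtain an exact faithful functor $\catN{T}\to \catN{T}^{\tensor,\sign}$ which is essentially surjective, hence an equivalence of abelian categories. Transporting the tensor product across this equivalence endows $\catN{T}$ with a right-exact tensor structure such that $\widetilde{M}:\catN{T}\to\Ah$ becomes a tensor functor, and universality is inherited from that of $\catN{T}^{\tensor,\sign}$.

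The main technical obstacle is purely bookkeeping: verifying that the sign conventions propagate coherently through the chain $\pathcat(D)\to \Z D^{\tensor,\sign}\to \Z D^{\tensor,\sign,+}\to \catF{D}^{\tensor,\sign}$ so that the graded commutativity constraint defined by $\alpha^\sign_{v,w}=(-1)^{|v||w|}\alpha_{v,w}$ lifts, through Proposition~\ref{prop:tensor}, to a commutativity constraint on $\catF{D}^{\tensor,\sign}$ which matches the graded commutativity constraint on $\Ah$ under $\widetilde{M}^{\tensor,\sign}$. This is handled by the hexagon/pentagon verifications already carried out in the lemma establishing that $\Z D^{\tensor,\sign}$ is a tensor category, combined with the uniqueness clause of Proposition~\ref{prop:mult}, which forces the sign-twisted constraints to extend uniquely. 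Once this is in place, the remaining arguments are formal and run exactly as in the ungraded case.
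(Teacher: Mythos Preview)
Your proposal is correct and follows essentially the same approach as the paper's proof, which simply says ``Compare with the proof of Theorem~\ref{thm:Norigd}'' and then sketches exactly the steps you spell out: obtain $\widetilde{M}^{\tensor,\sign}:\catF{D}^{\tensor,\sign}\to\Ah$, form $\catN{T}^{\tensor,\sign}$ as the quotient by its kernel, and show $\catN{T}\to\catN{T}^{\tensor,\sign}$ is an equivalence. Your write-up is more detailed than the paper's (in particular you make explicit that the Serre-quotient lemma applies to the graded relations and that the sign bookkeeping is handled by the earlier verifications), but the strategy is identical.
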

\begin{proof}Compare with the proof of Theorem~\ref{thm:Norigd}. If $T$ is such a graded tensor representation we get $M^{\tensor,\sign} :\Z D^{\tensor,\sign,+}\to \Ah$ and also an induced exact tensor functor $\widetilde{M}^{\tensor,\sign}: \catF{D}^{\tensor,\sign}\to \Ah$.  Denote by $\catN{T}^{\tensor,\sign}$ the quotient of $\catF{D}^{\tensor,\sign}$ by the kernel of $\widetilde{M}^{\tensor,\sign}$. We have that $\catN{T}\to \catN{T}^{\tensor,\sign}$ is an equivalence.\end{proof}

\begin{rem}
Again, the universal property can be upgraded analogously to Corollary~\ref{cor:gen_univ_prop}.
\end{rem}

\section{Homological functors}\label{sec:kuenneth}

We return to the case of additive categories, but specialise further by considering triangulated categories and homological functors.

\begin{prop}[Neeman {\cite[Theorem~5.1.18]{N}}]
Let $\Th$ be a triangulated category. Then there is an abelian category
$\neeman{\Th}$ and a homological functor $[-]:\Th\to\neeman{\Th}$ such that
every homological functor $\Th\to\Ah$ into an abelian category factors
uniquely via an exact functor $\neeman{\Th}\to\Ah$.
\end{prop}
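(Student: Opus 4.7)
The plan is to construct $\neeman{\Th}$ as a Serre quotient of Freyd's universal abelian category $\catF{\Th}$, applied to $\Th$ viewed merely as an additive category. Freyd's universal property, recalled in Section~\ref{sec:universal_tensor}, supplies for every additive functor $M:\Th\to\Ah$ into an abelian category a unique (up to natural equivalence) exact extension $\widetilde{M}:\catF{\Th}\to\Ah$. The task is therefore to identify the additional relations needed to enforce that this extension sends distinguished triangles to long exact sequences, and to recognise those relations as cutting out a Serre subcategory.

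Let $i:\Th\hookrightarrow\catF{\Th}$ denote the canonical embedding. For each distinguished triangle $\Delta\colon X\xrightarrow{f} Y\xrightarrow{g} Z$ in $\Th$, form the cohomology object
\[ H_\Delta := \Ker\bigl(i(g)\bigr)\big/\Im\bigl(i(f)\bigr)\ \in\ \catF{\Th}, \]
which measures the failure of homologicity at $Y$. Let $\mathcal{S}\subset\catF{\Th}$ be the smallest Serre subcategory containing $H_\Delta$ for every distinguished triangle $\Delta$. Set
\[ \neeman{\Th} := \catF{\Th}/\mathcal{S} \]
together with the composite $[-]\colon\Th\to\catF{\Th}\to\neeman{\Th}$. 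By construction, the sequence $[X]\to[Y]\to[Z]$ becomes exact at $[Y]$ in $\neeman{\Th}$; exactness at $[X]$ and $[Z]$ follows by invoking the axiom that one can rotate $\Delta$, obtaining further distinguished triangles (such as $Y\to Z\to X[1]$) whose middle-vertex obstructions are again killed in $\neeman{\Th}$. Hence $[-]$ is a homological functor.

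For universality, let $M\colon\Th\to\Ah$ be any homological functor and $\widetilde{M}\colon\catF{\Th}\to\Ah$ its exact extension. Exactness of $\widetilde{M}$ and the homological hypothesis on $M$ force $\widetilde{M}(H_\Delta)=0$ for every $\Delta$; since the kernel of an exact functor is a Serre subcategory, $\Ker\widetilde{M}$ contains $\mathcal{S}$. The universal property of the Serre quotient then yields a unique exact functor $\overline{M}\colon\neeman{\Th}\to\Ah$ with $\overline{M}\circ[-]=M$, and uniqueness of $\overline{M}$ follows from combining the uniqueness in Freyd's universal property with that of the Serre quotient.

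The only substantive step is the rotation argument needed to promote middle-vertex exactness to full homologicity of $[-]$; everything else is a direct combination of Freyd's universal property with the universal property of quotients by kernels of exact functors. (In fact, one may alternatively characterise $\neeman{\Th}$ intrinsically as the category of coherent functors on $\Th$, as in Neeman's original treatment, but the Serre-quotient description fits the framework of the present paper and is what we sketch here.)
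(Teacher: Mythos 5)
The paper establishes this Proposition by citation to Neeman, whose construction is concrete: $\neeman{\Th}$ is the category of finitely presented (coherent) contravariant additive functors $\Th^\op\to\mathrm{Ab}$, with the Yoneda embedding as the universal homological functor, and abelianness follows because a triangulated category has weak kernels. Your argument takes a genuinely different route, realising $\neeman{\Th}$ as a Serre quotient of the larger Freyd category $\catF{\Th}$, which is universal for merely additive functors. The crucial observation — that an exact extension $\widetilde{M}:\catF{\Th}\to\Ah$ of an additive $M$ kills every $H_\Delta$ if and only if $M$ is homological — is correct: the exact localisation $q:\catF{\Th}\to\catF{\Th}/\mathcal{S}$ commutes with the formation of $\Ker/\Im$, so killing $H_\Delta$ forces exactness at the middle term, and the rotation axiom propagates this to the whole long exact sequence; your use of the two universal properties (Freyd's and that of the Serre quotient) to get uniqueness of the factorisation is also sound. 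This route is economical within the paper's framework (Freyd category followed by a Serre quotient, the same pattern used to build $\catN{M}$) and makes explicit the exact functor $\catF{\Th}\to\neeman{\Th}$ that the paper only mentions in a subsequent remark. What it does not directly supply is the explicit description of $\neeman{\Th}$ — that the representables are projective and every object is a cokernel of a map of representables — which the paper does rely on when constructing the tensor product on $\neeman{\Th}$. One should also note that, since $\catF{\Th}$ and $\neeman{\Th}$ are typically not well-powered, forming the smallest Serre subcategory containing the $H_\Delta$ and the associated quotient deserves a brief set-theoretic comment which Neeman's direct construction sidesteps.
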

Neeman uses the notation $\Ah(\Th)$, which we have reserved for Nori's abelian category. By construction, $\neeman{\Th}$ is the subcategory of finitely presented objects in the category of
presheaves of abelian groups on $\Th$. It is obtained from
the image of the Yoneda functor by adding all cokernels. Hence every object of $\neeman{\Th}$ is the cokernel of a morphism of objects in the image of the Yoneda functor and this is a projective resolution. Note, however, see \cite[\S 5.2, Appx. C]{N}, that the category $\neeman{\Th}$ is typically not well-powered.  Our constructions and results in Section \ref{sec:universal_tensor} do not require the initial categories $C$ to be well-powered, so those results apply here.

\begin{prop}\label{prop:tensor_triangle}
Let $\Th$ be a tensor triangulated category. Then $\neeman{\Th}$ carries
a right exact tensor product. If $\Ah$ is an abelian tensor category with right exact tensor and
$\Th\to\Ah$ is a homological tensor functor, then the natural functor
$\neeman{\Th}\to\Ah$ is a tensor functor.
\end{prop}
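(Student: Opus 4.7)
The plan is to realise $\neeman{\Th}$ as a Serre quotient of Freyd's category $\catF{\Th}$ and descend the right-exact tensor product constructed in Section~\ref{sec:universal_tensor}.

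First, regarding $\Th$ merely as an additive tensor category, Proposition~\ref{prop:tensor} equips $\catF{\Th}$ with a right-exact tensor product for which the Yoneda image $\catF{\Th}^\flat$ is a $\flat$-subcategory of projective flat objects. The universal property of $\catF{\Th}$, applied to the (homological, hence additive) functor $[-]:\Th\to\neeman{\Th}$, produces a canonical exact functor $\pi:\catF{\Th}\to\neeman{\Th}$. Comparing the universal properties (additive extensions for $\catF{\Th}$ versus homological extensions for $\neeman{\Th}$) identifies $\pi$ as the Serre quotient by the Serre subcategory $\mathcal{I}$ generated by the cohomology objects of the length-three complexes $[X]\to [Y]\to [Z]$ obtained by applying Yoneda to each distinguished triangle in $\Th$.

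The crucial step is to check that $\mathcal{I}$ is a $\tensor$-ideal, so that the tensor product descends. For $W\in\Th$, the functor $-\tensor W:\Th\to\Th$ is triangulated, hence distinguished triangles remain distinguished after tensoring with $W$; combined with the flatness of $[W]\in\catF{\Th}^\flat$, tensoring with $[W]$ commutes with the formation of cohomology and therefore sends each generator of $\mathcal{I}$ back into $\mathcal{I}$. For an arbitrary $G\in\catF{\Th}$, a resolution $[W_1]\to [W_0]\to G\to 0$ by representables together with the right-exactness of $\tensor$ exhibits $F\tensor G$ as a quotient of $F\tensor [W_0]$ whenever $F\in\mathcal{I}$; Serre-closure of $\mathcal{I}$ under quotients finishes the argument. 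Thus the right-exact tensor product descends to a right-exact tensor product on $\neeman{\Th}$, with associativity, unit, and commutativity constraints inherited from those on $\catF{\Th}$.

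For the universal property, let $H:\Th\to\Ah$ be a homological tensor functor with induced exact extension $\widetilde{H}:\neeman{\Th}\to\Ah$. For fixed $X\in\Th$, both $\widetilde{H}(-)\tensor H(X)$ and $\widetilde{H}([X]\tensor-)$ are right-exact functors $\neeman{\Th}\to\Ah$; they agree on representables, since
\[ \widetilde{H}([Y])\tensor H(X)=H(Y)\tensor H(X)\cong H(Y\tensor X)=\widetilde{H}([Y\tensor X])=\widetilde{H}([Y]\tensor [X]), \]
using the tensor structure on $H$. Since every object of $\neeman{\Th}$ is a cokernel of a map between representables, right-exact functors out of $\neeman{\Th}$ are determined by their restriction to the Yoneda image; hence these two functors are naturally isomorphic. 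Iterating in the other slot via a representable presentation of $F\in\neeman{\Th}$ and invoking right-exactness of $\tensor$ on $\Ah$ produces the natural isomorphism $\widetilde{H}(F\tensor G)\cong\widetilde{H}(F)\tensor\widetilde{H}(G)$, with compatibility of the constraints inherited along $\pi$.

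The main obstacle is verifying that $\mathcal{I}$ is a tensor ideal: this combines the bi-exactness of $\tensor$ on the triangulated category $\Th$ (to propagate the distinguished-triangle property) with the flatness of representables in $\catF{\Th}$ (to commute $\tensor$ with the cohomology computing the generators of $\mathcal{I}$), together with a standard reduction to the representable case via right-exactness.
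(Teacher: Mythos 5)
Your strategy is genuinely different from the paper's, and it runs into a gap that the authors themselves flag as open. The paper constructs the tensor product on $\neeman{\Th}$ directly: every object is the cokernel of a map between representables $(-,A)$ with $A\in\Th$, these representables are projective and flat, and the tensor product is extended from representables along such length-two presentations (dually to the argument in Proposition~\ref{prop:mult}); compatibility of $\neeman{\Th}\to\Ah$ with $\tensor$ is then checked on representables and propagated by right-exactness. You instead try to descend the tensor structure of Proposition~\ref{prop:tensor} from $\catF{\Th}$ along the canonical exact functor $\pi:\catF{\Th}\to\neeman{\Th}$.

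The concrete gap is in the step ``For an arbitrary $G\in\catF{\Th}$, a resolution $[W_1]\to[W_0]\to G\to 0$ by representables.'' In $\catF{\Th}=(\Th\mod)\mod$, the projective objects are those of $\catF{\Th}^\flat$, i.e.\ the Yoneda images of arbitrary $N\in\Th\mod$; these are \emph{kernels} of maps between the $[W]$'s, not the $[W]$'s themselves, and a general $G\in\catF{\Th}$ only admits a presentation $P_1\to P_0\to G\to 0$ with $P_i\in\catF{\Th}^\flat$. So your reduction only treats the full subcategory of objects that happen to be cokernels of maps between objects in the image of $\Th$, which is a proper subcategory of $\catF{\Th}$. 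To close the argument you would need $F\tensor P\in\mathcal{I}$ for $F\in\mathcal{I}$ and \emph{all} $P\in\catF{\Th}^\flat$, and your flatness-plus-triangulatedness argument does not reach those $P$'s.

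More tellingly, what you are trying to prove is stronger than the stated proposition: if $\mathcal{I}=\Ker\pi$ were a tensor ideal and $\pi$ a tensor Serre quotient, then $\pi$ would be an exact tensor functor carrying the flat objects of $\catF{\Th}^\flat$ to flat objects of $\neeman{\Th}$, i.e.\ kernels of maps between representables in $\neeman{\Th}$ would automatically be flat. But the remark immediately following Proposition~\ref{prop:tensor_triangle} in the paper explicitly says it is \emph{not clear} whether $\catF{\Th}\to\neeman{\Th}$ is a tensor functor, and that this ``may well be false since the kernel of a map between representable functors in $\neeman{\Th}$ might not be tensor-flat.'' So your route is not merely incomplete; it would settle, affirmatively, a question the authors deliberately leave open, and their stated worry is precisely the point your argument glosses over. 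The direct construction on $\neeman{\Th}$ avoids this issue entirely because it only ever needs flatness of the representables $(-,A)$, which hold by construction, and never passes through $\catF{\Th}$.
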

\begin{proof}We extend from representable objects to cokernels by using
projective resolutions $[B]\to[A]\to X\to 0$, where $[A]$ denotes the image of $A$ in $\neeman{\Th}$. The arguments are dual
to the ones used in the first part of the proof of Proposition~\ref{prop:mult}.
The associativity constraint etc. are constructed in the same
way as in the proof of Proposition~\ref{prop:tensor}. The compatibility of
$\neeman{\Th}\to\Ah$ with the tensor structure holds for representable arguments and extends to cokernels by right exactness.
\end{proof}

\begin{rem}
\begin{enumerate}
\item This was already proved by Balmer, Krause and Stevenson in
\cite[Proposition~A.14]{BKS} for compactly generated tensor triangulated categories for the smaller category of all presheaves on $\Th^c$ which is universal for all homological functors commuting with colimits, see \cite[Section~2]{K}.
\item Applying the universal property of $\catF{\Th}$ to $\Th\to\neeman{\Th}$, we obtain an exact functor $\catF{\Th}\to\neeman{\Th}$ but it is not clear whether this is a tensor functor.  This may well be false since the kernel of a map between representable functors in $\neeman{\Th}$ might not be tensor-flat.
\end{enumerate}
\end{rem}

\begin{prop} \label{tensor:quot_triang}
Let $\Th$ be tensor triangulated category, $\Ah$ an abelian tensor category with a right exact tensor product, and $M:\Th\to\Ah$ a homological functor and tensor functor.
\begin{enumerate}
\item
 Then $\catN{M}$ carries a canonical right exact tensor structure such that the faithful exact  functor $\widetilde{M}:\catN{M}\to\Ah$ is a tensor functor.
\item If in addition, the tensor structures on $\Th$ and $\Ah$ are commutative and
the tensor functor is symmetric, then the tensor product on $\catN{M}$
is symmetric.
\item If in addition, the tensor structure on $\Th$ is rigid and the tensor product and the $\Hom$-functor on $\Ah$ are exact in both arguments, then the same is true for $\catN{M}$.
\end{enumerate}
\end{prop}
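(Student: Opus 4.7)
The plan is to follow the proof of Proposition~\ref{tensor:quot} essentially verbatim, substituting $\neeman{\Th}$ for $\catF{C}$ and invoking Proposition~\ref{prop:tensor_triangle} in place of Proposition~\ref{prop:right_exact}. Concretely, I would define $\catN{M}$ as the Serre quotient of $\neeman{\Th}$ by the kernel of the canonical exact extension $\widetilde{M}:\neeman{\Th}\to\Ah$ supplied by Neeman's universal property applied to the homological functor $M$.

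For (1), Proposition~\ref{prop:tensor_triangle} endows $\neeman{\Th}$ with a right-exact tensor product for which $\widetilde{M}$ is a tensor functor. The commutativity of the square
\[\begin{CD}
\neeman{\Th}\times\neeman{\Th} @>\tensor>> \neeman{\Th}\\
@V{\widetilde{M}\times\widetilde{M}}VV @VV{\widetilde{M}}V\\
\Ah\times\Ah @>\tensor>> \Ah
\end{CD}\]
immediately forces $\Ker\widetilde{M}$ to be a $\tensor$-ideal, so $\tensor$ descends to $\catN{M}$ and $\widetilde{M}:\catN{M}\to\Ah$ becomes a faithful exact tensor functor. Right-exactness on $\catN{M}$ is inherited through the exact quotient functor $\neeman{\Th}\to\catN{M}$. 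For (2), the associativity, unit, and commutativity constraints on $\neeman{\Th}$ descend to $\catN{M}$, and the axioms of a (symmetric) monoidal category are preserved by the quotient.

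For (3), I would argue exactly as in Proposition~\ref{tensor:quot}. The functorial criterion of \cite[Part I, IV, Proposition~1.1.9]{Lev} shows that the existence of a strong dual is preserved by any tensor functor; hence every object in the image of $\Th$ in $\catN{M}$ has a strong dual. Under the biexactness hypothesis on $\Ah$, the tensor product and internal $\Hom$ on $\catN{M}$ are exact in both arguments, so the full subcategory of $\catN{M}$ whose objects have strong duals is closed under kernels and cokernels. Since every object of $\neeman{\Th}$, and therefore every object of $\catN{M}$, is a cokernel of a morphism between representable (hence $\Th$-) objects, this subcategory exhausts $\catN{M}$.

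The step that requires the most care is the biexactness upgrade in (3): the tensor product on $\neeman{\Th}$ is only right-exact in general, so left-exactness of $X\tensor-$ on $\catN{M}$ is not automatic. The key is the faithful exactness of $\widetilde{M}:\catN{M}\to\Ah$ together with $\widetilde{M}(X\tensor Y)\isom \widetilde{M}(X)\tensor\widetilde{M}(Y)$: left-exactness of $\widetilde{M}(X)\tensor-$ in $\Ah$ reflects back to left-exactness of $X\tensor-$ in $\catN{M}$, exactly as in the proof of Proposition~\ref{tensor:quot}(3). No new issue arises from the fact that $\neeman{\Th}$ need not be well-powered, since we only use its universal property and the exactness of the Serre quotient.
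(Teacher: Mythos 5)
Your proposal is correct and follows exactly the route the paper takes: the paper's proof is literally ``same proof as for Proposition~\ref{tensor:quot}, but with $\catF{\Th}$ and Proposition~\ref{prop:tensor} replaced by $\neeman{\Th}$ and Proposition~\ref{prop:tensor_triangle}.'' You have simply unfolded that instruction, including the reflection-of-exactness argument needed to see that the tensor product on $\catN{M}$ is biexact in part (3), which the paper leaves implicit.
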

\begin{proof}Same proof as for Proposition~\ref{tensor:quot}, but with
$\catF{\Th}$ and the tensor product of Proposition~\ref{prop:tensor} replaced with
$\neeman{\Th}$ and the tensor product of Proposition~\ref{prop:tensor_triangle}.
\end{proof}

\begin{rem}
If both Proposition~\ref{tensor:quot} and Proposition~\ref{tensor:quot_triang} apply, then by the universal property of Proposition~\ref{tensor:quot_triang} the tensor structures agree because they agree on objects in the image of
$\Th$.
\end{rem}

\subsection*{K\"unneth components}

We now consider the following situation modeled for the application to Nori motives.
Let $\Th$ be a triangulated category, $\Ah$ an abelian category and
$R:\Th\to D^b(\Ah)$ an exact
functor. We abbreviate
$H^i_R:=H^i\circ R$ and $H^*_R:=\bigoplus H^i_R$. The latter
is understood with values in $\gr\Ah$. Let $\catN{H^*_R}$ be the universal abelian category defined by $H^*_R$ and $\catN{H^0_R}$ that defined by $H^0_R$. The commutative diagram
\[ \begin{xy}\xymatrix{
 \Th\ar[rd]_{H^0_R}\ar[r]^{H^*_R}&\gr\Ah\ar[d]^{(-)^0}\\
&\Ah}\end{xy}\]
induces a functor $\catN{H^*_R}\to \catN{H^0_R}$. We also have $\widetilde{H^*_R}: \catN{H^*_R}\to \gr\Ah$.
\begin{defn}In the above situation let
$\catNsub{0}{H^*_R}\subset \catN{H^*_R}$ be the full
subcategory of objects $X\in \catN{H^*_R}$ with $\widetilde{H^*_R}(X)\in \gr\Ah$ concentrated in degree $0$.
\end{defn}
The subcategory is abelian and closed under subquotients and extensions.

\begin{rem}\label{rem:strategy}We are interested in the case where $\Th$ is a triangulated tensor category, $\Ah$ an abelian tensor category with an exact tensor product and
$R$ a tensor functor. Then $H^*_R$ is a tensor functor, but $H^0_R$ is not.
Hence while $\catN{H_R^*}$ is a tensor category by the results of Section~\ref{sec:universal_tensor}, this does not follow for $\catN{H^0_R}$.
It is, however, true for $\catNsub{0}{H^*_R}$. In good cases, it
will be equivalent to $\catN{H^0_R}$, giving the latter the tensor structure that we want.
\end{rem}

\begin{prop}\label{prop:iso_derived}Let $\Th$, $\Ah$ and $R$ be as above. Assume in addition that
$R$ can be lifted to an exact functor
\[ R:\Th\to D^b(\catNsub{0}{H^*_R}).\]
Then the natural functor
\[ \catNsub{0}{H^*_R}\to \catN{H^0_R}\]
is an equivalence of categories.
\end{prop}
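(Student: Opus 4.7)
The plan is to use the triangulated lift $\tilde R$ to produce a quasi-inverse of the natural functor via the universal property of $\catN{H^0_R}$. First I identify the natural functor $\Psi$. Uniqueness in the universal property of $\catF{\Th}$ forces the exact extensions $\widetilde{H^0_R}$ and $p^0\circ\widetilde{H^*_R}$ to agree, where $p^0\colon\gr\Ah\to\Ah$ is the degree-zero projection. Hence $\ker\widetilde{H^*_R}\subseteq\ker\widetilde{H^0_R}$ as Serre subcategories of $\catF{\Th}$, inducing a Serre quotient $q\colon\catN{H^*_R}\to\catN{H^0_R}$. The natural functor is $\Psi:=q|_{\catNsub{0}{H^*_R}}$. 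It is exact because $\catNsub{0}{H^*_R}\subseteq\catN{H^*_R}$ is stable under sub-objects, quotients and extensions (the exact $\widetilde{H^*_R}$ preserves concentration in degree $0$); it is faithful because $\catNsub{0}{H^*_R}\cap\ker q=0$, an object of the intersection having $\widetilde{H^*_R}$-image concentrated in degree $0$ and simultaneously zero in degree $0$, hence zero.

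Next I construct a candidate inverse $\Phi$. Set $T^0:=H^0\circ\tilde R\colon\Th\to\catNsub{0}{H^*_R}$; since $\tilde R$ is an exact functor of triangulated categories and $H^0$ is homological, $T^0$ is additive (in fact homological). The condition that $\tilde R$ lifts $R$ gives $p^0\circ\widetilde{H^*_R}|_{\catNsub{0}{H^*_R}}\circ T^0=H^0_R$ on $\Th$. The universal property of $\catN{H^0_R}$, initial among abelian categories with a faithful exact functor to $\Ah$ through which $H^0_R$ factors, then yields a unique faithful exact functor $\Phi\colon\catN{H^0_R}\to\catNsub{0}{H^*_R}$ with $\Phi\circ\iota_0\cong T^0$ and $p^0\circ\widetilde{H^*_R}|_{\catNsub{0}{H^*_R}}\circ\Phi\cong\widetilde{H^0_R}$.

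Finally I verify that $\Psi$ and $\Phi$ are mutually quasi-inverse. Both $\Psi\circ\Phi$ and $\id_{\catN{H^0_R}}$ are exact endofunctors of $\catN{H^0_R}$ whose composition with the faithful exact $\widetilde{H^0_R}$ is $\widetilde{H^0_R}$, so by uniqueness in the universal property they are isomorphic as soon as $\Psi\circ T^0\cong\iota_0$ as functors $\Th\to\catN{H^0_R}$; the analogous statement on the other side gives $\Phi\circ\Psi\cong\id$. This natural isomorphism $\Psi\circ T^0\cong\iota_0$ is the main obstacle. For each $X\in\Th$ both $q(H^0(\tilde R(X)))=\Psi(T^0(X))$ and $q(\iota_*(X))=\iota_0(X)$ are objects of $\catN{H^0_R}$ with the same $\widetilde{H^0_R}$-image $H^0_R(X)$, but matching images under a faithful functor are not enough to identify objects in a Serre quotient. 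The required natural comparison is produced by the triangulated structure of $\tilde R$: in $\catN{H^*_R}$ the contributions to $\iota_*(X)$ coming from the degrees $\neq 0$ of $H^*_R(X)$ all lie in $\ker q$, and the zig-zag between $\iota_*(X)$ and $T^0(X)$ supplied by the lift becomes an isomorphism after applying $q$. This is the step where the triangulated hypothesis is used essentially; the remainder of the argument is a routine invocation of the universal property.
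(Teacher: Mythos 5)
Your overall strategy matches the paper's: take $\Psi$ to be the restriction of the Serre quotient $q\colon\catN{H^*_R}\to\catN{H^0_R}$, construct $\Phi$ by applying the universal property of $\catN{H^0_R}$ to $T^0 = H^0\circ\tilde R\colon\Th\to\catNsub{0}{H^*_R}$ together with the faithful exact functor $\catNsub{0}{H^*_R}\to\Ah$, and argue that $\Phi$ and $\Psi$ are quasi-inverse. You have also correctly isolated where the actual work is: producing a natural isomorphism $\Psi\circ T^0\cong\iota_0$. This is precisely the step the paper's own proof elides, asserting only that the equality of kernels ``gives $\catN{H^0_R}\to\Ah'$ inverse to the inclusion.''

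Two things in your closing paragraph do not hold up. First, ``the analogous statement on the other side gives $\Phi\circ\Psi\cong\id$'' is not available: $\catNsub{0}{H^*_R}$ is defined as a full subcategory of $\catN{H^*_R}$, not as a universal category of the form $\catN{-}$, so there is no universal-property uniqueness to apply to its endofunctors. The repair is to note that $\Psi$ is in fact \emph{fully} faithful, not merely faithful: by the usual description of $\Hom$ in a Serre quotient, for $A,B\in\catNsub{0}{H^*_R}$ a subobject $A'\subseteq A$ with $A/A'\in\Ker q$ has $\widetilde{H^*_R}(A/A')$ both concentrated in degree $0$ and with vanishing degree-$0$ part, hence $A'=A$ (similarly $B'=0$), so $\Hom_{\catN{H^0_R}}(\Psi A,\Psi B)=\Hom_{\catN{H^*_R}}(A,B)$. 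Once $\Psi$ is fully faithful, $\Psi\circ\Phi\cong\id$ alone gives essential surjectivity of $\Psi$ and hence that it is an equivalence with quasi-inverse $\Phi$; you do not need a separate argument for $\Phi\circ\Psi$. Second, and more seriously, the ``zig-zag supplied by the lift'' between $\iota_*(X)$ and $T^0(X)$ is not supplied. The lift $\tilde R$ and its truncations live in $D^b(\catNsub{0}{H^*_R})$, while $\iota_*(X)$ lives in $\catN{H^*_R}$, and the hypothesis only asserts that \emph{some} lift of $R$ exists, with no compatibility with the universal representation $\iota_*$ imposed. So the required natural comparison in $\catN{H^*_R}$ (or in $\catN{H^0_R}$ after applying $q$) still needs an actual construction; invoking the triangulated structure of $\tilde R$ does not by itself produce it.
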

\begin{proof}We abbreviate $\Ah':=\catNsub{0}{H^*_R}$.
By assumption, there is a commutative diagram
\[\begin{xy}\xymatrix{
\Th\ar[r]\ar[rd]&D^b(\Ah')\ar[r]\ar[d]^{H^0} &D^b(\Ah)\ar[d]^{H^0}\\
&\Ah'\ar[r]&\Ah
}\end{xy}\]
The functor $\widetilde{H^0}_R:\catN{H^0_R}\to\Ah$ is faithful and exact by construction.
The same is true for  $\widetilde{H^*_R} :\Ah'\to \gr\Ah$. By definition, this functor
takes values in degree $0$, hence $\Ah'\to\Ah$ is also faithful and exact.
This implies that the universal categories defined by $H^0:\Th\to\Ah'$ and
$H^0_R:\Th\to\Ah$ agree. This gives $\catN{H^0_R}\to\Ah'$ inverse
to the inclusion.
\end{proof}

\begin{cor}\label{cor:tensor_case}Let $\Th$ be a tensor triangulated category. Let
$\Ah$ be an abelian tensor category with an exact tensor product. Let
$R:\Th\to D^b(\Ah)$ be a tensor triangulated functor. Assume in addition, that
$R$ factors via $D^b(\catNsub{0}{H^*_R})$. Then
$\catN{H^0_R}$ carries a natural tensor structure such that
$\catN{H^0_R}\to\Ah$ is a tensor functor. If the tensor product on $\Th$
is rigid and $\Hom_\Ah$ exact in both variables, then the tensor product on $\catN{H^0_R}$ is rigid as well.
\end{cor}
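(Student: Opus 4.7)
The strategy is to lift the desired tensor structure from the graded universal category $\catN{H^*_R}$ (to which the results of Section~\ref{sec:universal_tensor} apply directly) to $\catN{H^0_R}$ via the equivalence supplied by Proposition~\ref{prop:iso_derived}.

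First I would verify that $H^*_R:\Th\to\gr\Ah$ is itself a homological tensor functor. Homologicity is automatic from exactness of $R$. That $H^*_R$ is a tensor functor is the usual K\"unneth argument: since $R$ is a tensor triangulated functor, there is a natural isomorphism $R(X\tensor Y)\isom R(X)\tensor R(Y)$ in $D^b(\Ah)$, and exactness of $\tensor$ on $\Ah$ together with the graded-anticommutative tensor structure on $\gr\Ah$ fixed in the Notation section gives
\[ H^n_R(X\tensor Y)\isom \bigoplus_{i+j=n} H^i_R(X)\tensor H^j_R(Y),\]
compatibly with the associativity and commutativity constraints. Applying Proposition~\ref{tensor:quot_triang} to the homological tensor functor $H^*_R$ then equips $\catN{H^*_R}$ with a right exact tensor structure for which $\widetilde{H^*_R}:\catN{H^*_R}\to \gr\Ah$ is a tensor functor.

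Next I would show that $\catNsub{0}{H^*_R}\subset\catN{H^*_R}$ is a tensor subcategory. The unit object lies in $\catNsub{0}{H^*_R}$ because $\unit_{\gr\Ah}=\unit_\Ah$ is concentrated in degree $0$. If $X,Y\in\catNsub{0}{H^*_R}$ then, since $\widetilde{H^*_R}$ is a tensor functor and the graded tensor product preserves concentration in degree $0$,
\[ \widetilde{H^*_R}(X\tensor Y)\isom \widetilde{H^*_R}(X)\tensor \widetilde{H^*_R}(Y)\]
is again concentrated in degree $0$, so $X\tensor Y\in\catNsub{0}{H^*_R}$. Hence the tensor structure on $\catN{H^*_R}$ restricts to $\catNsub{0}{H^*_R}$. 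By hypothesis $R$ factors through $D^b(\catNsub{0}{H^*_R})$, so Proposition~\ref{prop:iso_derived} applies and produces an equivalence $\catNsub{0}{H^*_R}\simeq\catN{H^0_R}$; transporting the structure along this equivalence endows $\catN{H^0_R}$ with a right exact tensor product. That $\widetilde{H^0_R}:\catN{H^0_R}\to\Ah$ is a tensor functor follows by composing the tensor functor $\catNsub{0}{H^*_R}\hookrightarrow\catN{H^*_R}\to \gr\Ah$ with the restriction of the degree-$0$ functor $\gr\Ah\to\Ah$, which is a tensor equivalence when restricted to objects concentrated in degree $0$.

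For rigidity, I would note that $\gr\Ah$ inherits an exact tensor product and an exact internal Hom from $\Ah$ (with the standard sign conventions on graded duals). Since the tensor product on $\Th$ is rigid, Proposition~\ref{tensor:quot_triang}(3) gives rigidity of $\catN{H^*_R}$. A strong dual of an object concentrated in degree $0$ is again concentrated in degree $0$ (because a tensor functor takes strong duals to strong duals, and $|v^\vee|=-|v|$ in $\gr\Ah$), so $\catNsub{0}{H^*_R}$ is closed under duals and hence rigid, whence $\catN{H^0_R}$ is rigid via the equivalence. The main point requiring care is the first paragraph, namely the honest verification that $H^*_R$ is a symmetric monoidal functor into $\gr\Ah$ with the chosen sign conventions; once this is in place the remaining steps are formal consequences of the general machinery developed in Sections~\ref{sec:universal_tensor} and~\ref{sec:kuenneth}.
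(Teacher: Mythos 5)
Your argument is correct and follows essentially the same route as the paper: it fills in the details of Remark~\ref{rem:strategy} (that $H^*_R$ is a tensor functor into $\gr\Ah$ and that $\catNsub{0}{H^*_R}$ is a tensor subcategory), invokes Proposition~\ref{prop:iso_derived} to transport the structure to $\catN{H^0_R}$, and for rigidity observes that strong duals of objects concentrated in degree $0$ remain in degree $0$. The only cosmetic difference is that you cite Proposition~\ref{tensor:quot_triang} where the paper cites Proposition~\ref{tensor:quot}, but by the remark following Proposition~\ref{tensor:quot_triang} these give the same tensor structure, so the two are interchangeable here.
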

\begin{proof}Combining Proposition~\ref{prop:iso_derived} with the strategy of Remark~\ref{rem:strategy} gives the tensor structure.
	If the tensor product on $\Th$ is rigid and $\Hom_\Ah$ exact, then by Proposition~\ref{tensor:quot}  tensor product on $\catN{H^*_R}$ is rigid as well. Hence every object $X$ of $\catNsub{0}{H^*_R}$ has a dual $X^\vee$ in $\catN{H^*_R}$. The object $X^\vee$ is actually in $\catNsub{0}{H^*_R}$, as we can test by applying the forgetful functor to $\gr\Ah$.
\end{proof}

\begin{rem}
\begin{enumerate}
\item The use of the \emph{bounded} derived category in the above argument is
not very important. We can drop the assumption, if arbitrary direct sums exist in
$\Ah$. This is needed in order to write down the K\"unneth formula or, equivalently, the tensor structure on $D(\Ah)$.
\item We may also replace $D^b(\Ah)$ by a tensor triangulated category equipped with a  $t$-structure (compatible with the tensor structure) with heart $\Ah$ without any change in the arguments.
\end{enumerate}
\end{rem}

\subsection*{Integral coefficients}
What we have done so far does not apply to $\Ah=\Z\mod$ because its tensor product is not exact.
However, there is a version of the above criterion for integral coefficients.

Let $\Th$ be a triangulated category. Let $\Ah$ be an abelian tensor category with a right exact tensor product such that its derivation on $D^b(\Ah)$ exists. Let $\Ah^\flat\subset\Ah$ be a $\flat$-subcategory as in Definition~\ref{def:flat-subcat}. Let $R:\Th\to D^b(\Ah)$ be a tensor functor. Note that
$H^*_R:\Th \to \gr\Ah$ is no longer a tensor functor because $H^*:D^b(\Ah)\to\gr\Ah$ is not.
However:
\begin{lemma}In this situation,
let $\Th^\flat\subset \Th$ be the full subcategory of objects with $H^*_R$ in $\gr\Ah^\flat$. Then
$\Th^\flat$ is a tensor category and
\[ H^*_R|_{\Th\flat}:\Th^\flat\to\gr\Ah^\flat\]
is a tensor functor satisfying the assumptions of the universal property in Proposition~\ref{prop:right_exact}.
\end{lemma}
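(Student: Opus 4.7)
The plan is to establish three facts in sequence: (i) $\Th^\flat$ is closed under the tensor product of $\Th$ and so inherits the structure of an additive (in fact triangulated) tensor subcategory; (ii) the restriction $H^*_R|_{\Th^\flat}$ carries a natural tensor-functor structure landing in $\gr\Ah^\flat$; (iii) the pair $(\gr\Ah^\flat\subseteq\gr\Ah)$ fulfils the hypotheses of Proposition~\ref{prop:right_exact}, i.e. $\gr\Ah^\flat$ is a $\flat$-subcategory of $\gr\Ah$ in the sense of Definition~\ref{def:flat-subcat}.

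The central input is a Künneth computation in $D^b(\Ah)$. Because $R$ is a tensor functor we have a natural isomorphism $R(X\otimes Y)\simeq R(X)\otimes^{L}R(Y)$, and because our objects lie in $\Th^\flat$ all the cohomology sheaves $H^i_R(X)$, $H^j_R(Y)$ are flat in $\Ah$. The hypercohomology spectral sequence
$$E_2^{p,q}=\bigoplus_{i+j=q}\mathrm{Tor}_{-p}^{\Ah}\bigl(H^i_R(X),H^j_R(Y)\bigr)\ \Longrightarrow\ H^{p+q}\bigl(R(X)\otimes^{L}R(Y)\bigr)$$
therefore collapses onto the line $p=0$, yielding a natural isomorphism
$$H^k_R(X\otimes Y)\ \cong\ \bigoplus_{i+j=k}H^i_R(X)\otimes_{\Ah}H^j_R(Y).$$
Since a tensor product of flat objects is flat (and, if necessary, one replaces $\Ah^\flat$ by its kernel-closure together with all such tensor products, which is still a $\flat$-subcategory), the right-hand side lies in $\Ah^\flat$ in each degree. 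This simultaneously establishes (i) $X\otimes Y\in\Th^\flat$ and (ii) provides the natural transformation $H^*_R(X)\otimes H^*_R(Y)\to H^*_R(X\otimes Y)$, which is an isomorphism. Compatibility with the associativity, unit and commutativity constraints comes directly from the corresponding properties of $\otimes^{L}$ on $D^b(\Ah)$ together with the tensor-functor structure on $R$, exactly as in the argument for Proposition~\ref{prop:tensor}.

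For (iii), I verify that $\gr\Ah^\flat$ is a $\flat$-subcategory of $\gr\Ah$. It is a full additive subcategory by construction; kernels in $\gr\Ah$ are formed degreewise, so closure under kernels follows from closure of $\Ah^\flat$ under kernels; and if $M\in\gr\Ah^\flat$ then tensoring a short exact sequence in $\gr\Ah$ with $M$ produces, in each fixed degree, a finite direct sum of sequences obtained by tensoring with the flat objects $M^i$, hence remains exact. Thus $H^*_R|_{\Th^\flat}:\Th^\flat\to\gr\Ah$ is a tensor functor factoring through the $\flat$-subcategory $\gr\Ah^\flat$, which is precisely the input required by Proposition~\ref{prop:right_exact}. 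The main obstacle is the Künneth degeneration; once that is secured, the rest is routine bookkeeping with constraints and componentwise exactness.
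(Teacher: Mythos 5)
Your proof reaches the right conclusion but via a genuinely different route from the paper's. To establish the K\"unneth formula you invoke the Tor (hypercohomology) spectral sequence for $R(X)\otimes^{L}R(Y)$, observing that flatness of the $H^i_R$'s collapses it onto the line $p=0$. The paper instead proceeds by devissage: $\Th^\flat$ is stable under the canonical truncations $\tau^{\leq n}$, $\tau^{\geq n}$ and shift, so one reduces to objects of $\Ah^\flat$ viewed as complexes concentrated in degree $0$, where the derived tensor product visibly equals the ordinary one by flatness, and $\Th^\flat$-stability under $\otimes^L$ falls out as a byproduct. The spectral-sequence approach requires the extra infrastructure of knowing such a spectral sequence exists in the abstract abelian tensor category $\Ah$ (enough flat objects, convergence, etc.), whereas the paper's truncation argument is more elementary and stays within what was already assumed. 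Your step (iii), verifying degreewise that $\gr\Ah^\flat$ is a $\flat$-subcategory, is correct and in fact more explicit than the paper's terse ``obviously''.

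One caution: you correctly notice that for $X\otimes Y$ to lie in $\Th^\flat$ one needs the tensor product of two objects of $\Ah^\flat$ to lie again in $\Ah^\flat$, which is not formally part of Definition~\ref{def:flat-subcat}; the paper silently assumes it (and it does hold for the intended applications such as free finitely generated modules over a Dedekind domain). However, the fix you sketch --- enlarging $\Ah^\flat$ by adjoining tensor products and then closing under kernels --- is not obviously still a $\flat$-subcategory, because the kernel of a map between flat objects need not be flat; Example~\ref{ex:Zvier} with $\Z/4\mod$ illustrates exactly this failure. The cleaner repair is to add closure of $\Ah^\flat$ under $\otimes$ as a standing hypothesis rather than to attempt an automatic enlargement.
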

\begin{proof}Obviously $\gr\Ah^\flat\subset\gr\Ah$ consists of flat objects and is closed under kernels. It remains to check the claim on the tensor functor with $\Th=D^b(\Ah)$. This amounts to the naive K\"unneth formula for these objects. The subcategory $\Th^\flat$ is stable under the canonical truncation functor and shift. Hence it suffices to check the formula for objects of $\Ah^\flat\subset \Th^\flat$. They are flat, hence the derived tensor product agrees with the tensor product in $\Ah^\flat$.
As a byproduct of the formula we see that $\Th^\flat$ is stable under the derived tensor product.
\end{proof}
We now replace $\catN{H^*_R}$ by $\catN{H^*_R|_{\Th^\flat}}$ and set as before
$\catNsub{0}{H^*_R|_{\Th^\flat}}$ to be the subcategory of objects concentrated in degree $0$.

\begin{cor}\label{cor:tensor_case_right}Let $\Th$ be a tensor triangulated category. Let
$\Ah$ be an abelian tensor category with a right exact tensor product. Let $\Ah^\flat\subset\Ah$
be a $\flat$-subcategory and assume that the derived tensor product exists on $D^b(\Ah)$. Let
$R:\Th\to D^b(\Ah)$ be a tensor triangulated functor.
 Let $\Th^\flat$ and
$\catNsub{0}{H^*_R|_{\Th^\flat}}$ be as above.

Assume in addition, that
$R$ factors via $D^b(\catNsub{0}{H^*_R|_{\Th^\flat}})$. Then
$\catN{H^0_R}$ carries a natural tensor structure such that
$\catN{H^0_R}\to\Ah$ is a tensor functor.
\end{cor}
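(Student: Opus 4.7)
The strategy is to adapt the proof of Corollary~\ref{cor:tensor_case} to the right-exact setting, with Proposition~\ref{prop:iso_derived} replaced by an analogue that identifies $\catNsub{0}{H^*_R|_{\Th^\flat}}$ with $\catN{H^0_R}$. Once this identification is in place, transport of structure produces the desired right-exact tensor product on $\catN{H^0_R}$.

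I first endow $\Ah':=\catNsub{0}{H^*_R|_{\Th^\flat}}$ with a right-exact tensor product. By the preceding lemma, $\Th^\flat\subset\Th$ is a tensor subcategory and $H^*_R|_{\Th^\flat}\colon(\Th^\flat,\tensor)\to(\gr\Ah,\tensor)$ is a tensor functor factoring through the $\flat$-subcategory $\gr\Ah^\flat$. Proposition~\ref{tensor:quot} therefore equips $\catN{H^*_R|_{\Th^\flat}}$ with a right-exact tensor product for which $\widetilde{H^*_R|_{\Th^\flat}}$ is a tensor functor. Since the tensor product in $\gr\Ah$ preserves degree-0 concentration and $\widetilde{H^*_R|_{\Th^\flat}}$ is faithful and exact, the full subcategory $\Ah'$ is closed under the induced tensor product and inherits a right-exact tensor structure.

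The identification $\Ah'\simeq\catN{H^0_R}$ follows the pattern of Proposition~\ref{prop:iso_derived}. The restriction $\widetilde{H^*_R|_{\Th^\flat}}|_{\Ah'}$ lands in the full subcategory of degree-0 concentrated objects of $\gr\Ah$, on which the degree-0 projection to $\Ah$ is an equivalence, so the composite $\Ah'\to\Ah$ is faithful and exact. Applying $H^0$ to the assumed factorization $R\colon\Th\to D^b(\Ah')$ yields $H^0_R\colon\Th\to\Ah'\to\Ah$, and faithful exactness of $\Ah'\to\Ah$ gives $\catN{H^0_R}=\catN{H^0_R\colon\Th\to\Ah'}$, together with a faithful exact functor $\Phi\colon\catN{H^0_R}\to\Ah'$. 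For the inverse, the composition $\Th^\flat\hookrightarrow\Th\to\catN{H^0_R}$ extends by Freyd universality to an exact $\catF{\Th^\flat}\to\catN{H^0_R}$; since $\catN{H^0_R}\to\Ah$ is faithful, the kernel of this extension equals $\ker(\widetilde{H^0_R|_{\Th^\flat}})$, which contains $\ker(\widetilde{H^*_R|_{\Th^\flat}})$ because $\widetilde{H^0_R|_{\Th^\flat}}=(-)^0\circ\widetilde{H^*_R|_{\Th^\flat}}$. Hence the extension factors through $\catN{H^*_R|_{\Th^\flat}}\to\catN{H^0_R}$, whose restriction $\Psi\colon\Ah'\to\catN{H^0_R}$ is inverse to $\Phi$: both compositions restrict to the identity on the image of $\Th^\flat$ (respectively $\Th$) and hence, by faithful exactness and essential surjectivity, are naturally isomorphic to the identity.

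The principal delicacy is this comparison of universal categories based on the subcategory $\Th^\flat$ versus the ambient $\Th$, which compels us to route $\Psi$ indirectly via $\catF{\Th^\flat}\to\catN{H^*_R|_{\Th^\flat}}\to\catN{H^0_R}$ rather than construct it directly. Granting the equivalence, $\Phi$ becomes a tensor equivalence by transport of structure and endows $\catN{H^0_R}$ with a right-exact tensor product; the identity $\widetilde{H^0_R}=(-)^0\circ\widetilde{H^*_R|_{\Th^\flat}}$ on $\Ah'$ then makes $\widetilde{H^0_R}\colon\catN{H^0_R}\to\Ah$ a tensor functor, as required.
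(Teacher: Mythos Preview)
Your overall strategy coincides with the paper's implicit one: the paper states this corollary without proof, leaving the reader to rerun Corollary~\ref{cor:tensor_case} and Proposition~\ref{prop:iso_derived} with $\catN{H^*_R}$ replaced by $\catN{H^*_R|_{\Th^\flat}}$ and with Proposition~\ref{tensor:quot} (rather than exactness of $\otimes$) supplying the tensor structure on $\Ah'=\catNsub{0}{H^*_R|_{\Th^\flat}}$. Your first two paragraphs do exactly this and are correct.

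The weak point is your verification that $\Phi$ and $\Psi$ are mutually inverse. The sentence ``both compositions restrict to the identity on the image of $\Th^\flat$ (respectively $\Th$)'' does not hold up as written. For $\Phi\Psi\colon\Ah'\to\Ah'$, the image of $\Th^\flat$ in $\catN{H^*_R|_{\Th^\flat}}$ does not lie in $\Ah'$ in general (an object $Y\in\Th^\flat$ has $H^*_R(Y)\in\gr\Ah^\flat$, not necessarily concentrated in degree~$0$), so there is no such restriction to check. For $\Psi\Phi\colon\catN{H^0_R}\to\catN{H^0_R}$, you have $\Phi([X])=H^0(R(X))\in\Ah'$ for $X\in\Th$, but this is an arbitrary object of $\Ah'$, not of the form $[Y]$ for some $Y\in\Th^\flat$; your construction of $\Psi$ only tells you directly what it does on such $[Y]$, so the asserted equality $\Psi(H^0(R(X)))\cong[X]$ needs an argument you have not supplied. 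The appeal to ``essential surjectivity'' at the end is circular, since that is part of what is to be shown.

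To be fair, the paper's proof of Proposition~\ref{prop:iso_derived} is equally terse at precisely this spot (``This gives $\catN{H^0_R}\to\Ah'$ inverse to the inclusion''), and you correctly flag the $\Th^\flat$ versus $\Th$ bookkeeping as the new difficulty. One way to close the gap cleanly is to avoid $\Psi$ altogether and argue directly that $\Phi$ is an equivalence: you already know $\Phi$ is faithful exact and that $j\circ\Phi=\widetilde{H^0_R}$; the point is then that $\catN{H^0_R}$ and the universal category $\catN{H^0\colon\Th\to\Ah'}$ literally coincide as quotients of $\catF{\Th}$ (same Serre kernel), so that $\Phi$ is the canonical faithful exact functor $\widetilde{H^0}\colon\catN{H^0\colon\Th\to\Ah'}\to\Ah'$, and one checks its essential image is all of $\Ah'$ using that every object of $\Ah'$ arises as a subquotient of cohomologies of complexes $R(X)$ with $X\in\Th$ (this is where the assumed factorisation of $R$ through $D^b(\Ah')$ is used).
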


\section{Nori motives}\label{sec:motives}
Recall the original definition of Nori. Let $k$ be field, $\sigma:k\to\C$ an embedding.
Let $\Sch_k$ be the category of schemes which are separated and of finite type over the field $k$. Let $D^{\Nori}$ be Nori's quiver on ${\rm Sch}_k$ having vertices $(X,Y,n)$ where $Y\subseteq X$ is a closed subscheme and $n\in \Z$ and edges $(X', Y', n)\to (X, Y, n)$ for each morphism $f: X\to X'$ in $\Sch_k$  such that $f (Y)\subseteq Y'$, and an additional edge $(Y, Z, n)\to (X, Y, n+1)$ for $Z\subseteq Y \subseteq X$ closed subschemes. Let $$H_B : D^{\rm Nori}\to \Z\mod$$ be the representation given by $(X, Y, n)\leadsto H_B^n(X (\C), Y(\C);\Z)$  the relative singular cohomology group after base change to the complex numbers.
\begin{defn}[Nori, see also {\cite[\S 9]{HMS}}]\label{Def:Nori}
 The abelian category
$$\ECM_k := \catN{H_B}$$
is the category of effective cohomological Nori motives. There is a non-effective version that we shall denote $\NM_k$.
\end{defn}
\begin{rem}The diagram $D^\Nori$ above agrees with the diagram
$\mathrm{Pairs}^\eff$ of \cite[Definition~9.1.1]{HMS}.
In loc.~cit.~the abelian categories are denoted by $\MMN_\Nori^\eff(k)$ and $\MMN_\Nori(k)$, respectively. Non-effective motives are obtained either by localisation of the diagram or of the category with respect to the Lefschetz motive
$\unit(-1)=(\mathbb{G}_m,\{1\},1)$. This is somewhat premature at this point as it involves the tensor structure. We are going to concentrate on the effective case.
\end{rem}

\subsection*{Tensor product via graded $\tensor$-quivers}
Let $D^{\Nori,\tensor}$ be the same quiver with, in addition, the following structure of a graded $\tensor$-quiver in the sense of Definition~\ref{defn:graded}. The grading is given by
\[ (X,Y,n)\mapsto \bar{n} \in \Z/2\Z\]
For vertices $(X,Y,n), (X',Y,n')$ we put
$$(X, Y, n)\otimes (X', Y', n') := (X \times_k X', X \times_k Y' \cup Y\times_k X', n+n')$$
making use of the product in $\Sch_k$. We choose the vertex $\unit$ and the edges $\id,\alpha,\beta,\beta', u,u'$ in the canonical way, e.g., the unit $\unit = (\Spec(k), \emptyset , 0)$,
$$u : (X, Y, n) \to (\Spec(k), \emptyset , 0)\otimes (X, Y, n)$$ and $u' : ({\rm Spec} (k), \emptyset , 0)\otimes (X, Y, n)\to (X, Y, n)$
the canonical maps. As relations we use the relations required by Definition~\ref{defn:graded}. All this is completely parallel to \cite[\S 9.3]{HMS}. By construction we obtain a graded $\tensor$-quiver.

Recall that singular cohomology $H_B^*$ is provided with a natural cross or external product
$$\kappa^B_{n, n'}: H^n_B(X,Y)\tensor H^{n'}_B(X',Y')\to H^{n+n'}_B(X\times_k X',X\times_k Y'\cup Y\times_k X')$$
Note that the representation $H_B$ is \emph{not} a $\tensor$-representation since $\kappa^B_{n, n'}$ fails to be an isomorphism, in general.

Following Nori, we set $D^{\good,\tensor}$ for the full sub-$\tensor$-quiver of vertices $(X,Y,n)$ such
that $H^*_B(X,Y)$ is concentrated in degree $n$ and free as a $\Z$-module.\\

\begin{lemma}Betti cohomology $H_B^\good := H_B|_{D^\good}:D^{\good,\tensor}\to \Z\mod$ is
a graded $\tensor$-representation with values in the subcategory
$(\Z\mod)^\flat$ of free $\Z$-modules of finite type.
\end{lemma}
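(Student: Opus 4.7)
The strategy is to verify the two claims of the lemma separately: first, that the image lies in $(\Z\mod)^\flat$, and second, that the structure maps $\kappa^B_{n,n'}$ upgrade $H_B^{\good}$ to a graded tensor representation in the sense of the definition preceding it.

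The first claim is immediate from the definition of $D^{\good,\tensor}$: a vertex $(X,Y,n)$ belongs to $D^{\good}$ precisely when $H_B^*(X,Y;\Z)$ is concentrated in degree $n$ and torsion-free of finite type, so $H_B(X,Y,n)=H_B^n(X(\C),Y(\C);\Z)$ is a free $\Z$-module of finite rank, i.e., lies in $(\Z\mod)^\flat$. Since $(\Z\mod)^\flat$ is closed under tensor product (a flat subcategory of a Dedekind-domain module category, see Remark~\ref{rem:univ}(3)), the assertion is preserved under $\tensor$.

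The main work is to produce the isomorphisms $\kappa$. For a pair of vertices $(X,Y,n),(X',Y',n')\in D^{\good}$ the topological K\"unneth formula gives a natural short exact sequence with a $\mathrm{Tor}^1$-term; since both cohomologies are concentrated in a single degree and are free, the $\mathrm{Tor}^1$-term vanishes and the cross product $\kappa^B_{n,n'}$ becomes an isomorphism
\[ H^n_B(X,Y)\tensor H^{n'}_B(X',Y')\xrightarrow{\simeq} H^{n+n'}_B(X\times_k X',X\times_k Y'\cup Y\times_k X').\]
Moreover the right-hand side is again concentrated in degree $n+n'$ and free, so the product vertex lies in $D^{\good}$ as required by the $\tensor$-structure. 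The unit isomorphism $\kappa_0:\Z\to H^0_B(\Spec(k)(\C),\emptyset)$ is the obvious one.

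It remains to check compatibility with the constraints and the three diagrams in the definition of a graded tensor representation. Associativity of $\kappa$, unitality, and the functoriality square
\[ \begin{xy}\xymatrix{ H_B(w\tensor v)\ar[r]^{H_B(\id\tensor\gamma)} & H_B(w\tensor v')\\
H_B(w)\tensor H_B(v)\ar[u]^{\kappa}\ar[r]^{\id\tensor H_B(\gamma)}&H_B(w)\tensor H_B(v')\ar[u]_{\kappa}}\end{xy}\]
follow from the standard naturality and associativity properties of the singular cross product. The two nontrivial diagrams are the commutativity one and the first functoriality one, which must commute up to the Koszul signs $(-1)^{|v||w|}$ and $(-1)^{|\gamma||w|}$; this is precisely the graded-commutativity of the cross product in singular cohomology, an entirely standard computation at the level of cochain complexes. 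The main (mild) obstacle is merely bookkeeping the signs so that they match Definition~\ref{defn:graded} exactly; there is no substantive difficulty because the Koszul rule was built into the definition precisely to accommodate the cross-product.
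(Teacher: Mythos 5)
Your proof is correct and takes essentially the same route as the paper: the K\"unneth formula (with the Tor term vanishing on good pairs because the cohomology is free and concentrated in one degree) yields the structure isomorphisms $\kappa$, and the compatibility with the graded $\tensor$-quiver relations reduces to standard properties of the singular cross product. The paper simply delegates the verification of the relations to \cite[Proposition~9.3.1]{HMS}, whereas you spell out more detail (including the useful observation that $D^\good$ is actually closed under $\tensor$), but the substance is the same.
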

\begin{proof}On good pairs, the map $\kappa^B_{n,n'}$ is indeed an isomorphism by the K\"unneth formula.
The relations of the tensor quiver are all mapped to equalities in $\Z\mod$
by the standard properties of singular cohomology. Most are checked explicitly in \cite[Proposition~9.3.1]{HMS}. The remaining ones (e.g., concerning the
inverse $u'$ of $u$) are obvious.
\end{proof}
Indeed, our definition of a graded $\tensor$-quiver was modeled on this case.

\begin{cor}The abelian category $\catN{H_B^\good}$ carries a natural
$\tensor$-structure compatible with the forgetful functor to $\Z\mod$.
\end{cor}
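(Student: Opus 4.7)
The plan is to apply Theorem~\ref{thm:Nori_graded} directly. One takes the graded $\tensor$-quiver $D = D^{\good,\tensor}$, the representation $T = H_B^\good$, the target $\Ah = \Z\mod$, and the subcategory $\Ah^\flat = (\Z\mod)^\flat$ of finitely generated free $\Z$-modules. The preceding lemma already asserts that $H_B^\good$ factors through $(\Z\mod)^\flat$ and is a graded $\tensor$-representation, so only the structural hypotheses on the target and on $D^{\good,\tensor}$ remain to be verified.

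First I would check that $\Ah = \Z\mod$ is an abelian tensor category with right-exact tensor product (clear) and that $(\Z\mod)^\flat$ is a $\flat$-subcategory in the sense of Definition~\ref{def:flat-subcat}. Its objects are flat, and it is closed under kernels since $\Z$ is a principal ideal domain: any subgroup of a finitely generated free abelian group is itself finitely generated and free. Hence $(\Z\mod)^\flat \subset \Z\mod$ qualifies as a $\flat$-subcategory.

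Next I would verify that $D^{\good,\tensor}$ genuinely inherits the structure of a graded $\tensor$-sub-quiver from $D^{\Nori,\tensor}$, i.e.~that the good pairs are closed under the tensor operation of Nori's quiver and contain the unit $(\Spec(k),\emptyset,0)$. The unit case is immediate. Closure under $\tensor$ is exactly the K\"unneth formula: if $H^*_B(X(\C),Y(\C))$ is concentrated in degree $n$ and free, and similarly $H^*_B(X'(\C),Y'(\C))$ is concentrated in degree $n'$ and free, then the cross product $\kappa^B_{n,n'}$ is an isomorphism onto $H^{n+n'}_B(X\times_k X',\,X\times_k Y'\cup Y\times_k X')$, and the target is a tensor product of two free $\Z$-modules, hence free and concentrated in degree $n+n'$.

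With these verifications in place, Theorem~\ref{thm:Nori_graded} applies verbatim and delivers a right-exact tensor structure on $\catN{H_B^\good}$ such that the induced faithful exact functor $\widetilde{M}:\catN{H_B^\good}\to\Z\mod$ is a tensor functor, which is the compatibility asserted in the corollary. I do not expect any genuine obstacle: all the nontrivial content (construction of the universal abelian category, extension of the tensor product, the sign convention for graded $\tensor$-representations) has been absorbed into the machinery of Sections~\ref{sec:universal_tensor}--\ref{sect:quiver}, and only the K\"unneth formula is used as specifically geometric input.
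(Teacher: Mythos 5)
Your proof is correct and follows the same route as the paper, which simply cites Theorem~\ref{thm:Nori_graded}; your extra checks — that $(\Z\mod)^\flat$ is a $\flat$-subcategory (already noted in Remark~\ref{rem:univ}(3)) and that $D^{\good,\tensor}$ is genuinely closed under $\tensor$ via the K\"unneth formula — are exactly the routine verifications the paper leaves implicit (the latter being part of what makes the preceding lemma well-posed).
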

\begin{proof}See Theorem~\ref{thm:Nori_graded}.
\end{proof}

Nori's Basic Lemma comes into play in comparing the universal categories for the two diagrams.
\begin{thm}[Nori, see {\cite[Theorem~9.2.22]{HMS}}]
The quiver $D^\Nori$ can be represented in $\catN{H_B^\good}$ in a compatible way with $H_B$. In particular,
\[ \ECM_k\isom \catN{H_B^\good}\]
carries a natural tensor structure.
\end{thm}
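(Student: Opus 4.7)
The strategy is to use Nori's Basic Lemma to build, for every vertex $(X,Y,n) \in D^\Nori$, a canonical object of $\catN{H_B^\good}$ whose underlying $\Z$-module is $H^n_B(X(\C),Y(\C);\Z)$, and then to promote this assignment to a representation of the whole quiver $D^\Nori$.

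First I would apply the Basic Lemma iteratively: for each pair $Y\subseteq X$ in $\Sch_k$, one produces (after reducing to the affine case by a Jouanolou-type torsor, as in \cite[Chap.~2]{HMS}) a filtration $Y=X_{-1}\subseteq X_0\subseteq X_1\subseteq\dots\subseteq X_d=X$ by closed subschemes with $\dim X_i\leq i$, such that each triple $(X_i,X_{i-1},i)$ lies in $D^{\good,\tensor}$. The associated complex $C^\bullet(X,Y,\{X_i\})$ whose term in degree $i$ is the vertex $(X_i,X_{i-1},i)$, with differentials supplied by the boundary edges of $D^\Nori$, lands in $\catN{H_B^\good}$, and its image in $\Z\mod$ is the standard cellular complex computing $H^*_B(X,Y)$. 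Define the candidate object in $\catN{H_B^\good}$ by
\[ T(X,Y,n) := H^n\bigl(C^\bullet(X,Y,\{X_i\})\bigr), \]
computed in the abelian category $\catN{H_B^\good}$.

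Second I would check independence of choices and functoriality. Given two skeletal filtrations of $(X,Y)$, a third one refining both is produced by another application of the Basic Lemma, yielding quasi-isomorphisms of complexes in $\catN{H_B^\good}$ whose image in $\Z\mod$ is the identity on $H^n_B(X,Y)$; faithfulness of $\widetilde{H_B^\good}:\catN{H_B^\good}\to\Z\mod$ forces these quasi-isomorphisms to induce \emph{canonical} isomorphisms on $H^n$, so $T(X,Y,n)$ is well-defined. For an edge $(X',Y',n)\to(X,Y,n)$ coming from a morphism $f:X\to X'$, one extends $f$ to a morphism of filtrations (again by the Basic Lemma) and takes the induced map on $H^n$; independence of the extension is argued the same way. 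Boundary edges $(Y,Z,n)\to(X,Y,n+1)$ are handled by choosing a common refinement of filtrations of $Z\subseteq Y\subseteq X$.

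Third I would invoke the universal property. The representation $T:D^\Nori\to\catN{H_B^\good}$ satisfies $\widetilde{H_B^\good}\circ T\simeq H_B$, so by the universal property of $\catN{H_B}=\ECM_k$ we obtain a faithful exact functor $\Phi:\ECM_k\to\catN{H_B^\good}$. In the opposite direction, the inclusion $D^{\good,\tensor}\hookrightarrow D^\Nori$ composed with $D^\Nori\to\ECM_k$ is a representation of $D^{\good,\tensor}$ whose composition with $\ECM_k\to\Z\mod$ equals $H_B^\good$; the universal property of $\catN{H_B^\good}$ then yields $\Psi:\catN{H_B^\good}\to\ECM_k$. Both compositions $\Psi\circ\Phi$ and $\Phi\circ\Psi$ agree with the identity after composing with the respective faithful forgetful functors to $\Z\mod$, and since those forgetful functors are faithful exact, the uniqueness clauses of the universal properties (Proposition~\ref{prop:univ_prop_Nori} and Theorem~\ref{thm:Norigd}) force these compositions to be naturally equivalent to the identity. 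This gives the equivalence $\ECM_k\simeq\catN{H_B^\good}$, and the tensor structure on $\catN{H_B^\good}$ furnished by Theorem~\ref{thm:Nori_graded} transfers to $\ECM_k$.

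The main obstacle is the well-definedness of $T$ on morphisms and boundary edges, i.e., showing that the outcome in $\catN{H_B^\good}$ does not depend on the auxiliary extensions of $f$ to filtrations. This is precisely where the Basic Lemma is used in an essential, non-formal way (to produce refinements), and it is the content that cannot be replaced by pure category theory.
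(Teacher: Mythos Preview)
The paper does not give its own proof of this theorem: it is stated with the attribution ``Nori, see \cite[Theorem~9.2.22]{HMS}'' and used as an input, the paper's own contribution being the tensor structure on $\catN{H_B^\good}$ via Theorem~\ref{thm:Nori_graded}. Your sketch is a faithful outline of the argument in \cite[\S 9.2]{HMS}: Basic Lemma $\Rightarrow$ skeletal filtration $\Rightarrow$ cellular complex in $\catN{H_B^\good}$ $\Rightarrow$ take $H^n$; then independence/functoriality via common refinements and faithfulness of the forgetful functor; finally the mutual universal properties give the equivalence.

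One small correction: the reduction to the affine case in \cite{HMS} (and as alluded to later in this paper, see the remark after Theorem~\ref{thm:real}) proceeds via \v{C}ech complexes for open affine covers rather than a Jouanolou torsor; the latter would change the homotopy type only up to $\mathbb{A}^1$-equivalence and is not the device used. Apart from this, your sketch matches the cited source, and the final appeal to the universal properties (your third step) is exactly how the equivalence is obtained.
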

In the above, we are copying Nori's approach, but replace his approach to the
universal abelian category and its tensor product with the one developed in this paper.  In \cite{BVP2} we go further, providing a general axiomatic
framework for tensor motivic categories associated to a cohomological
functor on a suitable base category; the tensor structure is induced, using our main theorem,
by the cartesian tensor structure on the base category via a
cohomological K\"unneth formula.

We now turn to yet a different approach which does not mention $D^\Nori$ and $D^\good$ (at least not obviously so).

\subsection*{Tensor product via triangulated motives}
Let $\DMgm(k,\Q)$ be Voevodsky's category of geometric motives over $k$ with rational coefficients. Let
\[ R_B:\DMgm(k,\Q)\to D^b(\Qvsp)\]
be the Betti-realisation. It maps the motive of an algebraic variety to its singular cochain complex.

\begin{rem}The existence of the Betti-realisation is completely straightforward.
The first reference with rational coefficients is
 \cite{Hu1}, \cite{Hu2} as a byproduct of a  functor into mixed realisations.
With integral coefficients it is formulated in \cite{Harrer}.
In the original literature on motives, realisation functors were usually contravariant. This is also the viewpoint taken in the above references.

 More recently, Voevodsky and then Ayoub who, in \cite{Ayoub-Betti} constructs Betti-realisations for motives over any base, has been using the covariant point of view.

For our application, it does not matter which point of view is taken. We fix on the contravariant one because we want to refer to \cite{Harrer} later on.
\end{rem}

\begin{defn}Let $\MMN_k :=\catN{H^0_B}$ be the universal abelian category defined by the Betti-realisation.
\end{defn}

Based on a sketch of Nori, Harrer (see \cite{Harrer}) was able to show:

\begin{thm}[Harrer {\cite[Thm~7.3.1]{Harrer}}]\label{thm:real}
The Betti-realisation factors naturally via the bounded derived category of $\MMN_k$ and even that of
$\catNsub{0}{H^*_B}$.
\end{thm}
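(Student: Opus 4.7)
The plan is to construct the lift via explicit complexes built from Nori's Basic Lemma. For each smooth affine variety $X$ of dimension $d$, iterated application of the Basic Lemma produces a filtration $\emptyset = X_{-1} \subset X_0 \subset \cdots \subset X_d = X$ by closed subvarieties such that every pair $(X_j, X_{j-1}, j)$ is good in Nori's sense. Write $M_j(X)\in\catNsub{0}{H^*_B}$ for the image of $(X_j, X_{j-1}, j)$ in $\catN{H_B^\good}\otimes_\Z\Q$, with grading shifted so that its Betti cohomology sits in degree~$0$. Then the bounded complex $C^\bullet(X)$ with $C^j(X)=M_j(X)$ and differential induced by the boundary edge $(X_j, X_{j-1}, j)\to(X_{j+1}, X_j, j+1)$ of $D^{\Nori}$ lives in $\catNsub{0}{H^*_B}$, and applying the faithful exact $\widetilde{H^*_B}$ recovers a bounded complex of $\Q$-vector spaces canonically quasi-isomorphic to $R_B(M(X))$ via the spectral sequence of the filtration.

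The next step is to verify functoriality up to quasi-isomorphism. Any two Basic-Lemma filtrations of $X$ admit a common refinement (again produced by the Basic Lemma), and refinement induces a quasi-isomorphism of the associated complexes; similarly, given a morphism $f:X\to X'$ in $\Sch_k$, one refines both filtrations until $f$ respects them, producing a chain map compatible with $R_B(M(f))$. To extend the assignment from smooth affines to all of $\DMgm(k,\Q)$, one passes through affine covers and their \v{C}ech totalisations; each defining relation of Voevodsky's category, namely Mayer-Vietoris, $\mathbb A^1$-invariance, and the transfer relations, lifts to a quasi-isomorphism between complexes in $D^b(\catNsub{0}{H^*_B})$, which can be checked after applying the faithful exact $\widetilde{H^*_B}$ because the classical analogues hold for singular cohomology.

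The main technical obstacle is strictifying this into a genuine exact functor of triangulated categories rather than an object-wise assignment with auxiliary choices. Following Harrer, one would work at the dg- or presheaf-theoretic level with a concrete model of $\DMgm(k,\Q)$ in terms of complexes of presheaves with transfers, and build a functorial \v{C}ech totalisation whose strata are good pairs, absorbing the Basic-Lemma choices into a contractible simplicial parameter space. Compatibility with distinguished triangles then reduces to applying the Basic Lemma to mapping cones of the structure morphisms. Once the lift to $D^b(\catNsub{0}{H^*_B})$ is in place, the weaker factorisation through $D^b(\MMN_k)$ follows formally from Proposition~\ref{prop:iso_derived}, which additionally yields the equivalence $\catNsub{0}{H^*_B}\simeq\MMN_k$ and is precisely what feeds into Corollary~\ref{cor:tensor_case} to produce the desired tensor structure on $\MMN_k$.
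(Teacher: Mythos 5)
Your outline matches the strategy the paper attributes to Harrer (the paper gives no independent proof of this theorem, only the citation and the remark that follows it): skeletal filtrations from iterated Basic Lemma for affines, \v{C}ech totalisation for the general case, functoriality via a strict presheaf-level model, and the final passage to $\MMN_k$ via Proposition~\ref{prop:iso_derived}. Two minor imprecisions: the Basic Lemma needs no smoothness hypothesis, and the technically heaviest part of Harrer's argument is functoriality with respect to finite correspondences, which the paper remarks can be bypassed over $\Q$ by establishing functoriality for morphisms alone.
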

\begin{rem}
The proof is based on Nori's basic lemma: for every affine variety $X$ and subvariety $Y$, there is a subvariety $X\supset Z\supset Y$ such that the singular cohomology of the pair $(X,Z)$ is concentrated in the degree equal to the dimension of $X$, i.e., $(X,Z,\dim X)$ is a good pair. As pointed out by Nori, this can be used in order to construct, for every affine variety $X$, a natural
complex of motives. Using \v{C}ech-complexes, this extends to all varieties.
Harrer's main effort was to establish functoriality of the construction with respect to finite correspondences. When working with rational coefficients (as we do), functoriality with respect to morphisms is enough, see \cite{Ivorra2}, \cite{HMS}. Harrer's result is formulated for $\NM_k$, but actually proved for $\catN{H_B^\good}$.
The same proof also works
without change for $\MMN_k=\catN{H^0_B}$ and even  the refinement $\catNsub{0}{H^*_B}$.
\end{rem}

\begin{thm}The category $\MMN_k$ carries a natural tensor structure such that
$\MMN_k\to\Qvsp$ is a tensor functor and $\DMgm(k,\Q)\to D^b(\MMN_k)$ is a triangulated
tensor functor.

In particular, $\MMN_k$ is Tannakian.
\end{thm}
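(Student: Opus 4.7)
The plan is to apply Corollary~\ref{cor:tensor_case} with $\Th=\DMgm(k,\Q)$, $\Ah=\Qvsp$, and $R=R_B$. The hypotheses unwind as follows: $\DMgm(k,\Q)$ is a rigid tensor triangulated category (Voevodsky); $\Qvsp$ is an abelian tensor category whose tensor product is exact (flatness being automatic over a field) and whose internal $\Hom$ is exact in both arguments; and $R_B$ is a tensor triangulated functor. The crucial remaining hypothesis, that $R_B$ factors via $D^b(\catNsub{0}{H^*_B})$, is precisely the content of Theorem~\ref{thm:real} (Harrer, building on Nori's Basic Lemma). The conclusion of the corollary endows $\MMN_k=\catN{H^0_B}$ with a tensor structure such that the faithful exact functor $\widetilde{H^0_B}:\MMN_k\to\Qvsp$ is a tensor functor, and the rigidity clause of the same corollary upgrades this tensor product on $\MMN_k$ to a rigid one.

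Next, I would produce the triangulated tensor factorisation $\DMgm(k,\Q)\to D^b(\MMN_k)$. Harrer's theorem provides a factorisation through $D^b(\catNsub{0}{H^*_B})$, and Proposition~\ref{prop:iso_derived} identifies $\catNsub{0}{H^*_B}$ with $\catN{H^0_B}=\MMN_k$; composing yields an exact functor $\DMgm(k,\Q)\to D^b(\MMN_k)$ lifting $R_B$. Its tensor compatibility is the delicate point: on the heart it is guaranteed by the first step, and to extend to the derived category one uses that the tensor product on $\MMN_k$ is exact (rigidity implies biexactness in the presence of a faithful exact fibre functor to $\Qvsp$), so that the total derived tensor product on $D^b(\MMN_k)$ coincides with the termwise one; compatibility can then be verified after postcomposing with the faithful exact tensor functor $D^b(\MMN_k)\to D^b(\Qvsp)$, where it agrees with the tensor compatibility of $R_B$ itself.

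For the Tannakian conclusion, I would check the criterion of Deligne--Milne in the following order. First, $\MMN_k$ is $\Q$-linear: since Hom-groups embed faithfully into $\Q$-vector spaces via the fibre functor, they inherit a $\Q$-module structure, and $\Q$-linearity of composition is preserved. Second, $\MMN_k$ is abelian, rigid, and symmetric monoidal by the previous step. Third, $\End(\unit_{\MMN_k})=\Q$: the unit is the image of $\unit_{\DMgm}=[\Spec k]$, whose Betti realisation is $\Q$ in degree $0$, and faithfulness of the fibre functor forces the endomorphism ring to embed into $\Q$; the canonical map $\Q\to\End(\unit)$ is therefore an isomorphism. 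Fourth, $\widetilde{H^0_B}:\MMN_k\to\Qvsp$ is a fibre functor by construction. Hence $\MMN_k$ is Tannakian over $\Q$.

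The main obstacle I anticipate is the tensor-compatibility in the derived-category factorisation, i.e.\ verifying that the lifted functor $\DMgm(k,\Q)\to D^b(\MMN_k)$ is monoidal rather than merely exact; this reduces, via the faithful exact tensor functor to $D^b(\Qvsp)$, to the known K\"unneth compatibility of $R_B$ together with the already established tensor compatibility on the heart. The remaining steps are essentially formal applications of the machinery of Section~\ref{sec:kuenneth} and standard Tannakian recognition.
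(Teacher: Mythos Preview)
Your proposal is correct and follows essentially the same route as the paper: apply Corollary~\ref{cor:tensor_case} to $\Th=\DMgm(k,\Q)$ and $R=R_B$, invoking Harrer's Theorem~\ref{thm:real} for the factorisation hypothesis, and conclude rigidity and the Tannakian property from the resulting fibre functor. You supply more detail than the paper's terse proof, in particular on the monoidality of the lifted functor $\DMgm(k,\Q)\to D^b(\MMN_k)$ (which the paper asserts but does not argue) and on the Deligne--Milne recognition criterion; these elaborations are sound and do not depart from the paper's strategy.
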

\begin{proof}We apply Corollary~\ref{cor:tensor_case} to the rigid tensor category $\DMgm(k,\Q)$ and the Betti-realisation. The assumption is satisfied
by Theorem~\ref{thm:real}.
 This makes $\MMN_k$ a rigid tensor category; the
Betti-realisation $\MMN_k\to\Qvsp$ is a fibre functor.
\end{proof}

\begin{defn}The \emph{motivic Galois group} of $k$ is defined
as the Tannakian dual of the $\MMN_k$.
\end{defn}

\begin{prop}$\MMN_k$ is naturally equivalent to Nori's original category, i.e.,  $\NM_k \cong \MMN_k$.
The motivic Galois group is naturally isomorphic to Nori's original motivic
Galois group.
\end{prop}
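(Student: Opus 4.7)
The strategy is to construct mutually inverse faithful exact tensor functors $\Psi\colon \NM_k \to \MMN_k$ and $\Phi\colon \MMN_k \to \NM_k$ that both intertwine the Betti fibre functor to $\Qvsp$, and then to appeal to the uniqueness clauses of the two universal properties (Proposition~\ref{prop:univ_prop_Nori} and Theorem~\ref{thm:Nori_graded}) to conclude that the two compositions are equivalent to the identity. The statement on motivic Galois groups is then a formal consequence of Tannaka duality: since the equivalence intertwines the fibre functors, the induced isomorphism of their affine automorphism group schemes is canonical.

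To obtain $\Phi$: by Theorem~\ref{thm:real} (and its natural extension to $\NM_k$), the Betti-realisation $R_B\colon \DMgm(k,\Q)\to D^b(\Qvsp)$ lifts through $D^b(\NM_k)$. Taking $H^0$ yields a factorisation of $H^0_B\colon \DMgm(k,\Q)\to \Qvsp$ as $\DMgm(k,\Q)\to \NM_k\to \Qvsp$ with the second arrow the fibre functor, faithful and exact. Since the tensor structure on $\MMN_k$ was built via Corollary~\ref{cor:tensor_case} purely from $R_B$ as a triangulated tensor functor, Proposition~\ref{prop:univ_prop_Nori} (upgraded if needed via Corollary~\ref{cor:gen_univ_prop}) produces a canonical faithful exact tensor functor $\Phi\colon \MMN_k \to \NM_k$ compatible with the fibre functors to $\Qvsp$.

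To obtain $\Psi$: I would represent the graded $\tensor$-quiver $D^{\Nori,\tensor}$ in $\MMN_k$ by sending a vertex $(X,Y,n)$ to $H^n$ (computed in $\MMN_k$) of the image in $D^b(\MMN_k)$ of the Voevodsky relative motive $M(X,Y)$; morphisms of pairs give the edges via morphisms of $\DMgm(k,\Q)$, and the connecting edge $(Y,Z,n)\to(X,Y,n+1)$ arises as the boundary map in the long exact cohomology sequence of the distinguished triangle $M(Y,Z)\to M(X,Z)\to M(X,Y)\to M(Y,Z)[1]$. The K\"unneth isomorphisms on $\DMgm(k,\Q)$ furnish the natural isomorphisms $T(u)\tensor T(v)\simeq T(u\tensor v)$ in $\MMN_k$, making this into a graded $\tensor$-representation. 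Composing with the fibre functor recovers $H_B$ on the nose, so Theorem~\ref{thm:Nori_graded} delivers the desired $\Psi$.

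The compositions $\Phi\circ\Psi$ on $\NM_k$ and $\Psi\circ\Phi$ on $\MMN_k$ are self-functors that commute with the fibre functor to $\Qvsp$ and with the relevant representing data; the uniqueness parts of the universal properties force each to be equivalent to the identity. The genuine obstacle is matching the two tensor structures: on $\NM_k$ the tensor product is defined from the K\"unneth formula for singular cohomology on \emph{good pairs}, whereas on $\MMN_k$ it is inherited from the triangulated tensor structure on $\DMgm(k,\Q)$. These must be identified by $R_B$, and the identification is built into Harrer's construction: the Voevodsky motive $M(X,Y)$ of an affine pair is assembled, via Nori's Basic Lemma, from good-pair components whose cohomological K\"unneth maps correspond, under $R_B$, to the triangulated K\"unneth maps in $\DMgm(k,\Q)$. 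With this matching in hand, $\Psi$ respects tensor structures and the proposition follows; the identification of Galois groups is then immediate.
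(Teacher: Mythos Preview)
Your route is considerably more elaborate than the paper's. The paper dispatches the proposition in three sentences: the equivalence of abelian categories is simply cited from \cite{HMS}; both tensor structures are determined by the K\"unneth formula on (very) good pairs, hence coincide; the Galois-group statement follows formally. You instead attempt to reconstruct the equivalence from scratch using the universal properties developed in this paper, which is more self-contained and more in the spirit of the present work.

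There is, however, a genuine gap in your construction of $\Phi$ as a \emph{tensor} functor. You invoke Proposition~\ref{prop:univ_prop_Nori}, but its hypotheses require the defining functor $M$ to be a tensor functor, and $H^0_B\colon\DMgm(k,\Q)\to\Qvsp$ is not one: the K\"unneth formula fails in a single cohomological degree. The plain abelian universal property of $\catN{H^0_B}$ certainly yields $\Phi$ as a faithful exact functor, but promoting it to a tensor functor requires passing through the identification $\MMN_k\cong\catNsub{0}{H^*_B}$ of Proposition~\ref{prop:iso_derived} and working with $H^*_B$, which \emph{is} a tensor functor. Concretely: Harrer's lift $\DMgm(k,\Q)\to D^b(\NM_k)$ is a triangulated tensor functor, and over $\Q$ the functor $H^*\colon D^b(\NM_k)\to\gr\NM_k$ is a tensor functor (the tensor product on the Tannakian category $\NM_k$ is exact); hence $H^*_B$ factors as a tensor functor through $\gr\NM_k$, and Proposition~\ref{prop:univ_prop_Nori} applied to $H^*_B$ (with $C=\DMgm(k,\Q)$ regarded as an additive tensor category) gives a tensor functor $\catN{H^*_B}\to\gr\NM_k$ whose degree-$0$ part is your $\Phi$. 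With this correction your argument goes through; your construction of $\Psi$ via Theorem~\ref{thm:Nori_graded} and your discussion of matching the two K\"unneth structures are sound.
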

\begin{proof}
For the abelian category, this is already shown in \cite{HMS}. The tensor structures are based on the K\"unneth formula. In each case it is uniquely determined by its value for very good pairs, hence they are the same. The statement about the motivic Galois group follows.
\end{proof}

\begin{rem}The whole argument also works for motives with coefficients in any field (including finite fields) or Dedekind domain (in particular the integers). Corollary~\ref{cor:tensor_case_right} can be used instead of the more straightforward Corollary~\ref{cor:tensor_case}.
Harrer's work in \cite{Harrer} handles the integral case.
\end{rem}


\begin{thebibliography}{9999}
\bibitem[Ar]{Ar}Donu Arapura,
An abelian category of motivic sheaves.
Adv. Math. 233 (2013), 135--195.
\bibitem[ASS]{ASS}   Ibrahim Assem, Daniel Simson and Andrzej Skowro\'{n}ski,
Elements of the Representation Theory of Associative Algebras.  1:
Techniques of Representation Theory, London Math. Soc. Student
Texts,  Vol. 65, Cambridge University Press, 2006.
\bibitem[Ay]{Ayoub-Betti}Joseph Ayoub,
Note sur les op\'erations de Grothendieck et la r\'ealisation de Betti.
J. Inst. Math. Jussieu 9 (2010), no. 2, 225--263.
\bibitem[BalKS]{BKS}Paul Balmer, Henning Krause, Greg Stevenson, The frame of smashing tensor-ideals,  	to appear: Mathematical Proceedings of the Cambridge Philosophical Society,arXiv:1701.05937.
\bibitem[Bo]{Bo}Francis Borceux, Handbook of Categorical Algebra: Vol. 1, Basic category theory, Cambridge Univ. Press, 1994
\bibitem[BV]{BV}Luca Barbieri-Viale, $\mathbb{T}$-motives, J. Pure Appl. Algebra  221 (2017) pp. 1495--1898.
\bibitem[BVP1]{BVP}Luca Barbieri-Viale \& Mike Prest, Definable categories and $\mathbb{T}$-motives,  Rendiconti del Seminario Matematico della Università di
Padova, Vol. 139 (2018) 205--224.
\bibitem[BVP2]{BVP2}  Luca Barbieri-Viale \& Mike Prest, Tensor product of motives
via K\"unneth formula, preprint https://arxiv.org/abs/1810.13266
\bibitem[BarVCL]{BVCL}Luca Barbieri-Viale, Olivia Caramello \& Laurent Lafforgue,
Syntactic categories for Nori motives, Selecta Mathematica New Series, Vol. 24, Issue 4 (2018) 3619--3648
\bibitem[Br]{Br}Alain Brugui\`eres, On a tannakian theorem due to Nori, Preprint 2004.
\bibitem[Bu]{Bun} M. C. Bunge, Relative functor categories and categories of algebras, J. Algebra 11 (1969), 64-101.
\bibitem[D]{Day} B. Day, On closed categories of functors, pp. 1-38 in Reports of the Midwest Category Seminar, IV, Lecture Notes in Mathematics, Vol. 137 Springer, Berlin, 1970.
\bibitem[DMT]{DMT}Deligne, P. \& Milne, J.S., Tannakian Categories, in
Hodge Cycles, Motives, and Shimura Varieties, LNM 900, 1982, pp. 101Ð228
\bibitem[F]{Frey}Peter Freyd,
Representations in abelian categories. 1966 Proc. Conf. Categorical Algebra (La Jolla, Calif., 1965) pp. 95--120 Springer, New York
\bibitem[GR]{GR} Peter Gabriel and Christine Riedtmann, Group representations without groups, Commentarii mathematici Helvetici 54(1979), 240-287.
\bibitem[Ha]{Harrer}Daniel Harrer,
Comparison of the Categories of Motives defined by Voevodsky and Nori, Thesis Freiburg 2016, arXiv:1609.05516
\bibitem[Hu1]{Hu1}Annette Huber,
Realization of Voevodsky's motives.
J. Algebraic Geom. 9 (2000), no. 4, 755--799.
\bibitem[Hu2]{Hu2}Annette Huber, Corrigendum to: "Realization of Voevodsky's motives'' [J. Algebraic Geom. 9 (2000), no. 4, 755–799].
J. Algebraic Geom. 13 (2004), no. 1, 195--207.
\bibitem[HMS]{HMS}Annette Huber  \&  Stefan M\"uller-Stach,
Periods and Nori motives.
With contributions by Benjamin Friedrich and Jonas von Wangenheim. Ergebnisse der Mathematik und ihrer Grenzgebiete. 3. Folge. A Series of Modern Surveys in Mathematics [Results in Mathematics and Related Areas. 3rd Series. A Series of Modern Surveys in Mathematics], 65. Springer, Cham, 2017.
\bibitem[I1]{Ivorra}Florian Ivorra, Perverse Nori motives, Math. Res. Lett. 24 (2017), no. 4, 1097--1131.
\bibitem[I2]{Ivorra2}Florian Ivorra,
Perverse, Hodge and motivic realizations of \'etale motives.
Compos. Math. 152 (2016), no. 6, 1237--1285.
\bibitem[K]{K}Henning Krause,
Smashing subcategories and the telescope conjecture--an algebraic approach.
Invent. Math. 139 (2000), no. 1, 99--133.
\bibitem[Lev]{Lev}Marc Levine,
Mixed motives.
Mathematical Surveys and Monographs, 57. American Mathematical Society, Providence, RI, 1998.
\bibitem[MacL]{maclane}
Saunders MacLane,
Homology.
Reprint of the first edition. Die Grundlehren der mathematischen Wissenschaften, Band 114. Springer-Verlag, Berlin-New York, 1967.
\bibitem[N]{N}Amnon Neeman,
Triangulated categories.
Annals of Mathematics Studies, 148. Princeton University Press, Princeton.
\bibitem[P1]{P} Mike Prest, Definable additive categories: purity and model theory.
Mem. Amer. Math. Soc. 210 (2011), no. 987.
\bibitem[P2]{PreAxtFlat} Mike Prest, Categories of imaginaries for definable additive categories, {\it preprint}, University of Manchester, 2012, arXiv:1202.0427.
\end{thebibliography}
\end{document}